\def\Z{\mathbb{Z}}
\def\Q{\mathbb{Q}}
\def\R{\mathbb{R}}
\def\H{\mathbb{H}}
\def\C{\mathbb{C}}
\def\B{\mathcal{B}}
\def\a{\alpha}
\def\b{\beta}
\def\G{\Gamma}
\def\T{\mathcal{T}}
\def\half{\tfrac{1}{2}}
\newcommand{\til}{\widetilde}
\def\PGL{{\rm PGL}}
\newcommand{\pMatrix}[4]{\left(\begin{matrix}#1 & #2 \\ #3 & #4\end{matrix}\right)}
\renewcommand{\pmatrix}[4]{\left(\begin{smallmatrix}#1 & #2 \\ #3 & #4\end{smallmatrix}\right)}
\renewcommand{\bar}[1]{\overline{#1}}
\def \ep {\varepsilon}
\newcommand{\sC}{\mathcal{C}}
\newtheorem{theorem}{Theorem}
\newtheorem{lemma}{Lemma}
\newtheorem{proposition}{Proposition}
\theoremstyle{remark}
\numberwithin{equation}{section}
\title{ Markov spectra for modular billiards }
\date{\today}
\author{Nickolas Andersen}
\address{UCLA Mathematics Department,
Box 951555, Los Angeles, CA 90095-1555} \email{nandersen@math.ucla.edu}
\author{William Duke}
\address{UCLA Mathematics Department,
Box 951555, Los Angeles, CA 90095-1555} \email{wdduke@ucla.edu}
\thanks{Supported by NSF grant DMS 1701638.}
 \dedicatory{In memory of Harvey Cohn (1923--2014)}
\begin{document}

\begin{abstract}
We introduce some analogues of the Markov spectrum defined in terms of modular billiards
and consider the problem of characterizing  that part of the spectrum below the lowest limit point.
\end{abstract}

\maketitle

\section{Introduction}

The abstract  triangle group usually denoted by $\Delta(2,3,\infty)$ is generated by  $A,B,C$
subject to the relations $A^2=B^2=C^2=(AB)^2=(AC)^3=1$.
 The extended modular group  $\G=\PGL(2,\Z)$ gives a faithful representation of this triangle group
when we make the identifications:
\begin{equation}\label{ABC}
A=\pm\pMatrix{0}{1}{1}{0},\;\;B=\pm\pMatrix{-1}{0}{0}{1},\;\;C=\pm\pMatrix{-1}{1}{0}{1}.
\end{equation}
The usual modular group $\mathrm{PSL}(2,\Z)$ is the subgroup of index 2 
consisting of all matrices in $\G$ with determinant one.
 
Let $\H$ be the upper half-plane with its hyperbolic metric given by $ds=\frac{|dz|}{y}$.
It is well known that  $M=\pm \pmatrix{a}{b}{c}{d}\in \G$ with  $\det M =1$ acts as an orientation preserving isometry of $\H$
through 
\begin{equation}\label{act}
z \mapsto M(z)= \frac{az+b}{cz+d},\end{equation}
 while  when $\det M =-1$  it acts through $z \mapsto   M (\bar{z})$ as 
an orientation reversing isometry. 
The generators $A,B,C$ give reflections across the unit circle, the $y$-axis, and the line $x=\frac{1}{2}$, respectively; $\G$ acts as a reflection group.
 A convenient fundamental domain for $\G$ is the solid hyperbolic triangle 
 \begin{equation}\label{T}
 \mathcal{T}=\{z\in \H; 0 \leq \mathrm{Re} \,z \leq \half ,  |z|\geq 1\},
 \end{equation}
whose sides are fixed by the generating reflections  and which is the shaded region depicted in Figure \ref{fig1}.
\begin{figure}[ht]
   \includegraphics[height=1.5in]{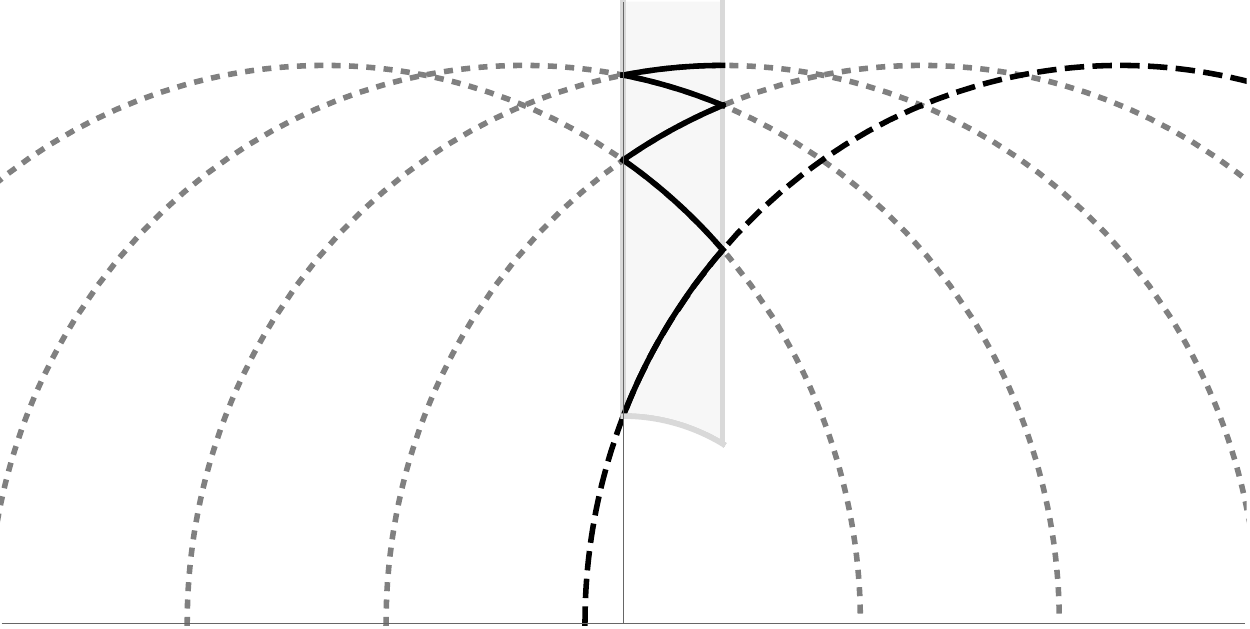}
         \caption{The modular triangle $\T$ and a modular billiard  }
    \label{fig1}
\end{figure}

 Let $S$  be an oriented geodesic in $\H$. 
 Thus $S$ is given either by a directed vertical half-line or a directed semi-circle that is orthogonal to $\R$
 and is uniquely determined by ordering its endpoints, say  $\a,\b$, which are distinct elements of $\R\cup \{\infty\}.$
 More generally, for $z_1,z_2\in \H\cup \R\cup \{\infty\}$
 let $\langle z_1,z_2\rangle$ denote the geodesic segment connecting $z_1$ to $z_2$. 
Hence we may write $S=\langle\a,\b\rangle$.
 
 The set of all geodesics splits into orbits $\G S$ under the action of $\G$, where $S$ is a fixed geodesic.
Let $\B$ denote the set of distinct directed geodesic segments in $\T$ of an orbit $\G S$.
We will refer to $\B$ as the trajectory of a modular billiard, but usually call it simply a  {\it  modular billiard}.
We will say that $\B$ is induced by $S$ for any $S$ in the orbit.
Note that $\B$  can be thought of as the path of a point   acting like a billiard ball bouncing off the sides
of $\T$, with  well-defined  bounces from the corners of $\T$, which are at 
\[z=i \;\;\;\mathrm{and}\;\;\;\;z=\rho=\half+\tfrac{\sqrt{-3}}{2}.\]

Suppose that $\B$ is induced by $\langle\a,\b\rangle$. Define its {\it reversal}  $\B^*$ to be the billiard induced by $\langle\b,\a\rangle$.
We say the billiard $\B$ is {\it non-orientable} if $\B =\B^*$,
 {\it orientable} otherwise.   If $\B$  contains a vertical segment we say it is  {\it improper}, otherwise {\it proper}. If  the total hyperbolic length of the segments in $\B$ is finite, we call the billiard {\it periodic}. Clearly a periodic billiard is proper.
The billiard  illustrated in Figure \ref{fig1} is  non-orientable  and  periodic.

 The simplest modular billiard, which we will denote $\mathcal{C}_0$, is that induced by 
 the imaginary axis $\langle0,\infty\rangle.$  It covers the segment connecting $i$ to infinity.  The  billiard induced by $\langle\frac{1}{2},\infty\rangle$,  denoted $\mathcal{C}_{\frac{1}{2}},$ 
covers the rest of the boundary of $\T.$  Both $\mathcal{C}_0$ and $\mathcal{C}_{\frac{1}{2}}$ are improper  and non-orientable.

In a prescient article of 1924, Artin \cite{Ar} observed that  properties of continued fractions imply that a generic modular billiard is dense in $\T$.
\begin{figure}[ht]
  \includegraphics[trim=.5em 0 0 .7em, clip, height=1.2in]{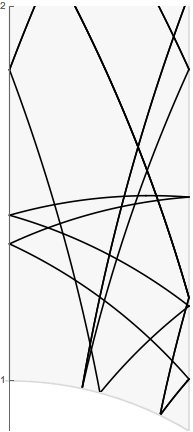} \qquad \quad  
  \includegraphics[trim=.5em 0 0 .7em, clip, height=1.2in]{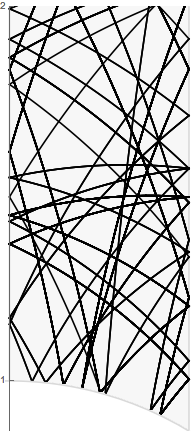} \qquad \quad 
  \includegraphics[trim=.5em 0 0 .7em, clip, height=1.2in]{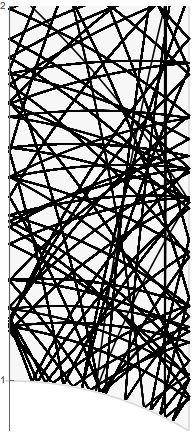}
         \caption{Increasing segments  of a billiard exhibiting generic behavior}
    \label{fig:generic}
\end{figure}
On the other hand,   the behavior of a non-generic billiard is  subtle and can be  quite interesting arithmetically.
For instance, a modular billiard $\B$ has a maximal height, possibly infinite, defined to be the supremum of imaginary parts of points on $\B$. Let $\lambda_\infty(\B)$ be {\it twice} this maximal height. Consider 
the set 
\[\mathcal{M}_\infty=\{\lambda_\infty(\B); \B \;\;\text{is a modular billiard}\}.\]
This is the {\it Markov spectrum}, which is usually defined (equivalently) in terms of the minima of indefinite binary quadratic forms.
The Markov numbers are 
those positive integers $p$ for which there  are $q,r\in \Z^+$ such that
 \begin{equation}\label{mareq}
  p^2 +q^2+r^2 =3pqr.
  \end{equation}
  These may be ordered into an infinite increasing sequence  whose $n^{th}$ term is denoted by $p_n$:
  \[\{1, 2, 5, 13, 29, 34, \dots ,p_n,\dots\}.\]
  The following result is a consequence of  the fundamental work of  A.~A.~Markov  \cite{Mark}:
  \begin{theorem}\label{mark}
  For any fixed $\kappa<3$ there are only finitely many modular billiards $\B$ with $\lambda_\infty(\B)<\kappa.$
The points in $ \mathcal{M}_\infty$ less than $3$ are given by the sequence
 \[
\left\{\sqrt{5},\sqrt{8},\tfrac{\sqrt{221}}{5},\tfrac{\sqrt{1517}}{13},\tfrac{\sqrt{7565}}{29},\dots,\tfrac{ \sqrt{9p_n^2-4}}{p_n},\dots. \right\},
 \]
which  is monotone increasing to the limit $3\in \mathcal{M}_\infty$.
 \end{theorem}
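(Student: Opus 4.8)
The plan is to pass, through the familiar dictionary between geodesics and binary quadratic forms, to A.~A.~Markov's classical theorem on the minima of indefinite binary quadratic forms, which can then be quoted directly. To an oriented geodesic $S=\langle\a,\b\rangle$ with $\a\ne\b$ in $\R\cup\{\infty\}$ attach the monic real binary quadratic form
\[
Q_S(x,y)=(x-\a y)(x-\b y)=x^2-(\a+\b)xy+\a\b\,y^2
\]
of discriminant $(\a-\b)^2$ (with the convention $Q_S(x,y)=y(x-\b y)$ when $\a=\infty$). Since a matrix of determinant $\pm1$ acts on $\R\cup\{\infty\}$ by one and the same M\"obius formula whether read as $z\mapsto M(z)$ or as $z\mapsto M(\bar z)$, the action of $\G=\PGL(2,\Z)$ on the endpoint pair is exactly the $\mathrm{GL}(2,\Z)$-action on forms; hence the orbit $\G S$ corresponds to the $\mathrm{GL}(2,\Z)$-equivalence class of $Q_S$ up to a positive scalar, and the reversal $S^*=\langle\b,\a\rangle$ yields the very same form. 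Writing $m(Q)=\inf\{\,|Q(x,y)|:(x,y)\in\Z^2\setminus\{0\}\,\}$, the reduction rests on the identity
\[
\lambda_\infty(\B)=\frac{|\a-\b|}{m(Q_S)},
\]
read as $+\infty$ precisely when $m(Q_S)=0$ --- a degenerate case covering every improper billiard (an endpoint in $\Q\cup\{\infty\}$) and some proper ones as well.

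To prove the identity I would argue geometrically. The geodesic with endpoints $\a',\b'$ is the Euclidean semicircle of radius $\tfrac12|\a'-\b'|$, so its highest point has imaginary part $\tfrac12|\a'-\b'|$, and if $ax^2+bxy+cy^2$ has roots $\a',\b'$ then $|\a'-\b'|=\sqrt{b^2-4ac}/|a|$. As $\g$ runs through $\G$, the leading coefficient of $Q_S\circ\g$ runs through the values $Q_S(p,r)$ at primitive integer vectors $(p,r)$, and $\inf|Q_S|$ over primitive vectors equals $m(Q_S)$ because $Q_S(kx,ky)=k^2Q_S(x,y)$. Therefore the supremum, over all geodesics in the orbit $\G S$, of the imaginary parts of their highest points equals $|\a-\b|/(2\,m(Q_S))$. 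It remains to identify this with the maximal height of $\B$, i.e.\ with $\sup\{\,\im z:z\in\T\text{ lies on some }\g S\,\}$. The inequality ``$\le$'' is clear. For ``$\ge$'', given $g\in\G S$ whose highest point $p$ satisfies $\im p>1$, apply integer translations $z\mapsto z+n$ and, if necessary, the reflection $z\mapsto-z$ --- all in $\G$ --- to move $\re p$ into $[0,\half]$ without disturbing $\im p$; the resulting point $p'$ then has $|p'|^2=(\re p')^2+(\im p')^2>1$, so $p'\in\T$, and being the highest point of a geodesic in $\G S$ it lies on $\B$, whence $\im p=\im p'$ is at most the maximal height of $\B$. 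As $|\a-\b|/(2\,m(Q_S))\ge\sqrt5/2>1$ whenever it is finite (Hurwitz's theorem), geodesics in the orbit with highest point above height $1$ already account for the full supremum, so the two suprema coincide; doubling, $\lambda_\infty(\B)=|\a-\b|/m(Q_S)$, which is the asserted identity.

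The identity yields $\mathcal M_\infty=\{+\infty\}\cup\mathcal M$, the value $+\infty$ coming from billiards with $m(Q_S)=0$ and $\mathcal M$ being the set of ratios $\sqrt\Delta/m$ in which $\Delta>0$ is the discriminant and $m>0$ the minimum $m(Q)$ of a real indefinite binary quadratic form $Q$; this $\mathcal M$ is the classical Markov spectrum. Markov's theorem \cite{Mark} states precisely that $\mathcal M\cap(0,3)$ is the strictly increasing sequence $\sqrt{9p_n^2-4}/p_n$; that the $n$-th term is attained, by a scalar multiple of a Markov form of discriminant $9p_n^2-4$ and minimum $p_n$, hence by the billiard induced by a geodesic with conjugate quadratic-irrational endpoints; that this sequence converges to $3$; and that below any fixed $\kappa<3$ only finitely many $\mathrm{GL}(2,\Z)$-equivalence classes of forms occur, hence --- each class accounting for a billiard together with its reversal --- only finitely many modular billiards $\B$ satisfy $\lambda_\infty(\B)<\kappa$. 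Finally $\mathcal M$ is closed in $\R$, so it contains the limit $3$ of $\sqrt{9p_n^2-4}/p_n$; since $3$ is finite, running the identity in reverse produces a modular billiard $\B$ with $\lambda_\infty(\B)=3$, so that $3\in\mathcal M_\infty$ as claimed.

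The only truly substantial ingredient is Markov's theorem, which is being cited rather than reproved; the remainder is bookkeeping. The single delicate point in that bookkeeping is the lower bound ``$\ge$'' in the height identity --- one must be sure the largest cusp excursion of a geodesic orbit is actually realised inside the fundamental triangle $\T$ --- and this is what the crude estimate $\lambda_\infty(\B)\ge\sqrt5>2$ (Hurwitz) guarantees.
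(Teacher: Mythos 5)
Your proposal is correct and follows essentially the same route as the paper, which likewise reduces Theorem \ref{mark} to Markov's classical theorem on minima of indefinite binary quadratic forms via the dictionary between billiard orbits and form classes and the identity $\lambda_\infty(\B)^{-1}=d^{-1/2}\inf|Q|$ (the paper's formula \eqref{ma}, with your $|\a-\b|=\sqrt{d}$ for the monic normalization). Your careful verification that the supremum of apex heights over the orbit is actually realized inside $\T$ is a point the paper leaves implicit, and your handling of it via the Hurwitz bound is sound.
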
 
  \begin{figure}[ht]
  \includegraphics[height=1.in]{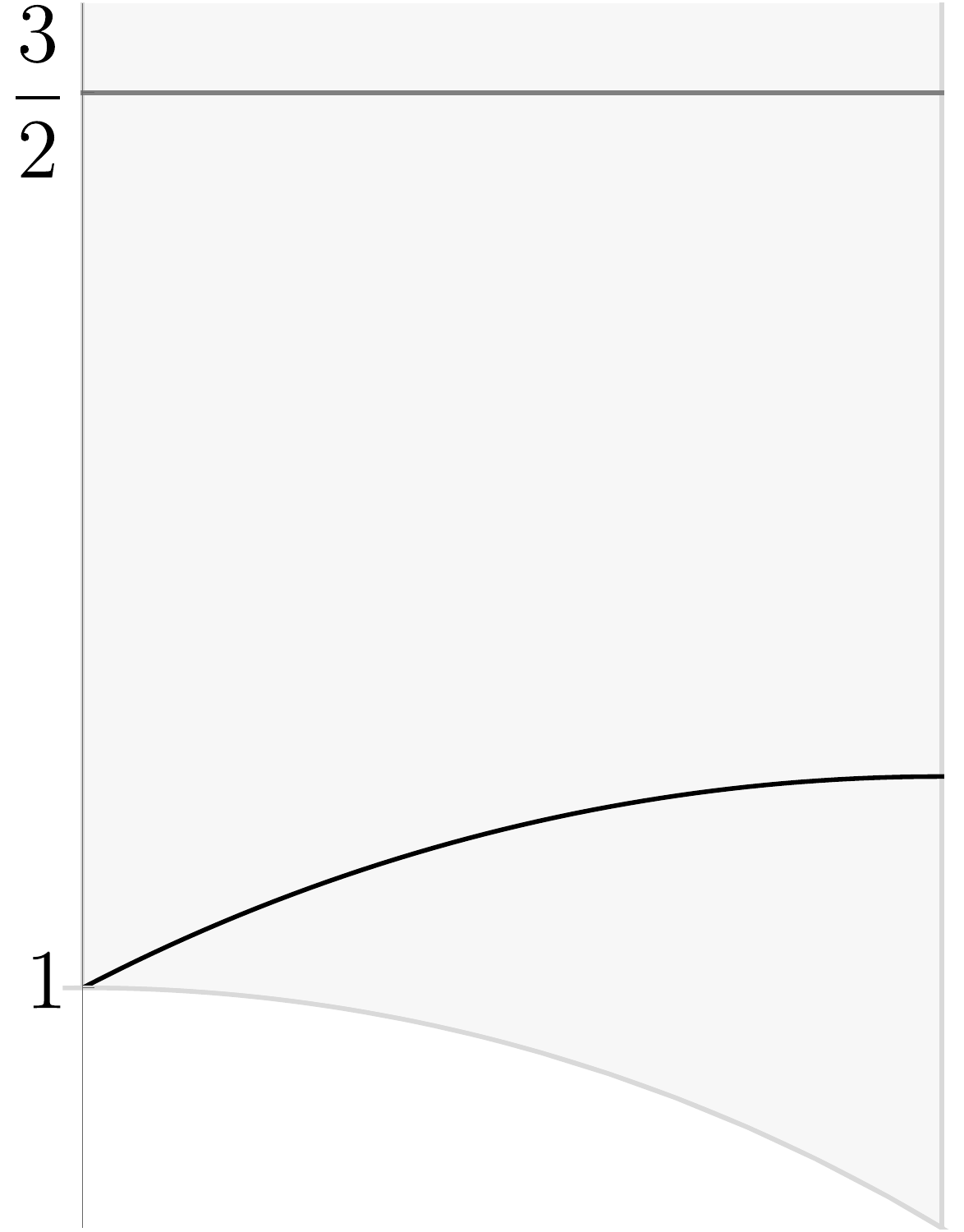}\;\;\;\;\;\; 
  \includegraphics[height=1.in]{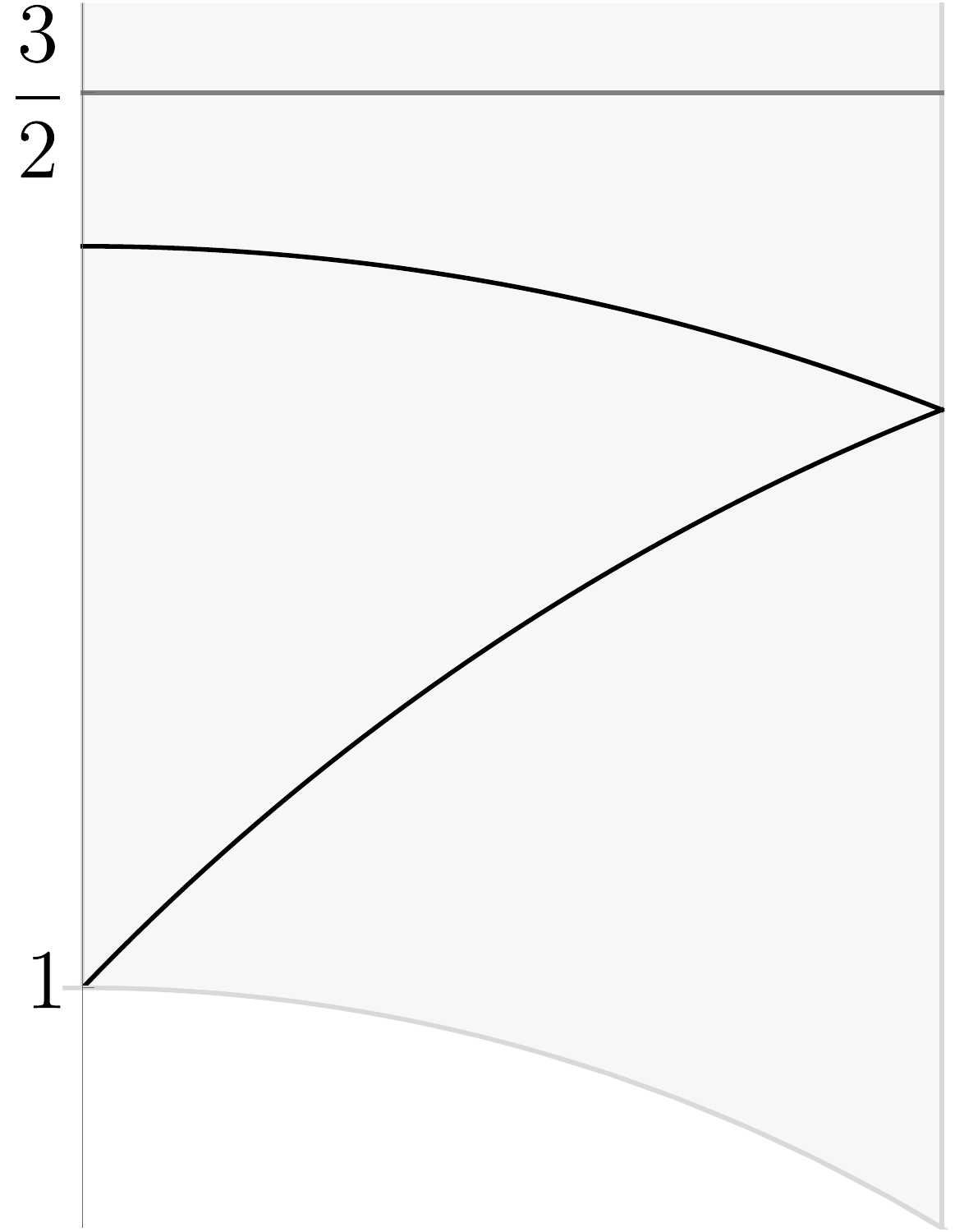}      \;\;\;\;\;\; 
  \includegraphics[height=1.in]{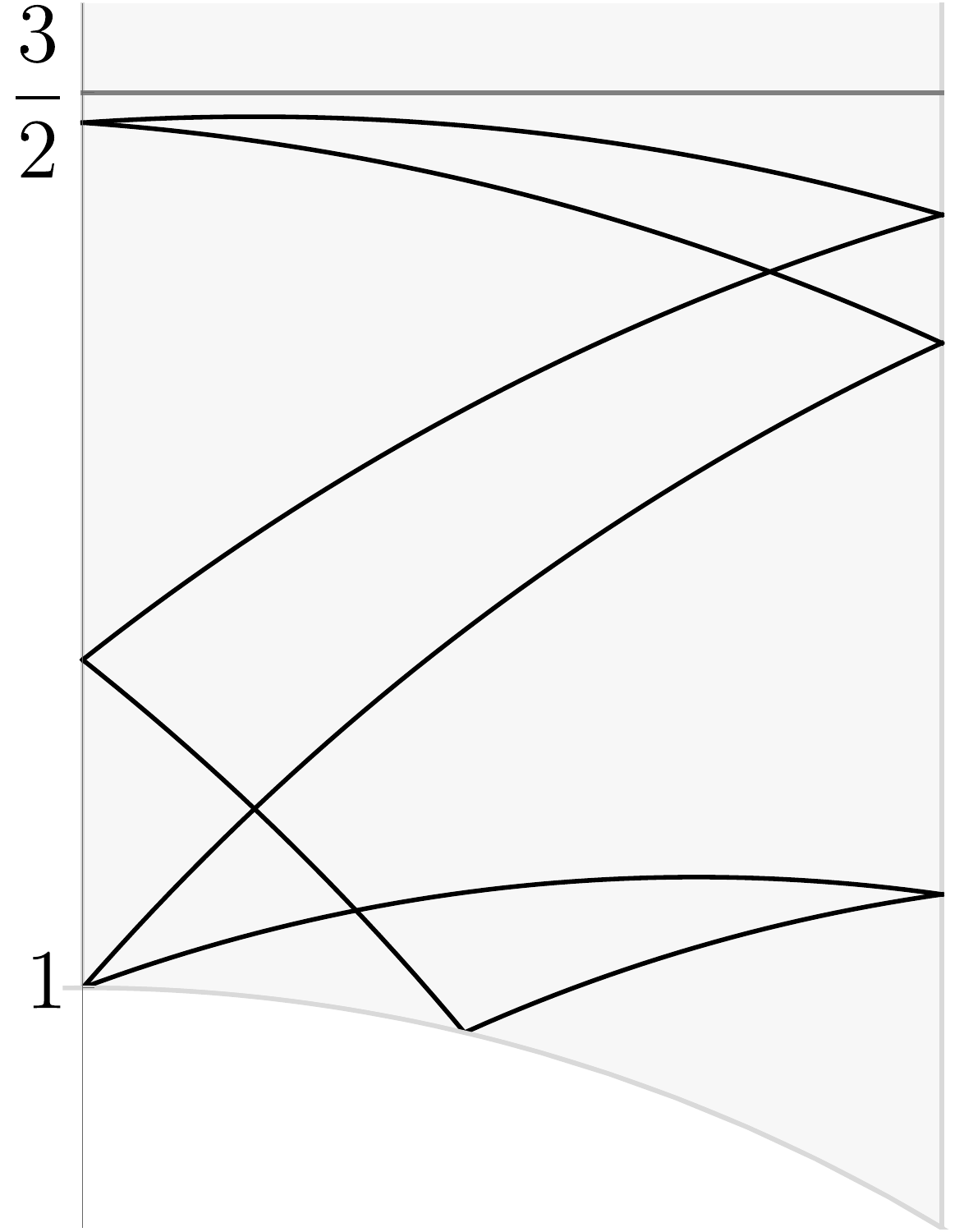}    \;\;\;\;\;\; 
  \includegraphics[height=1.in]{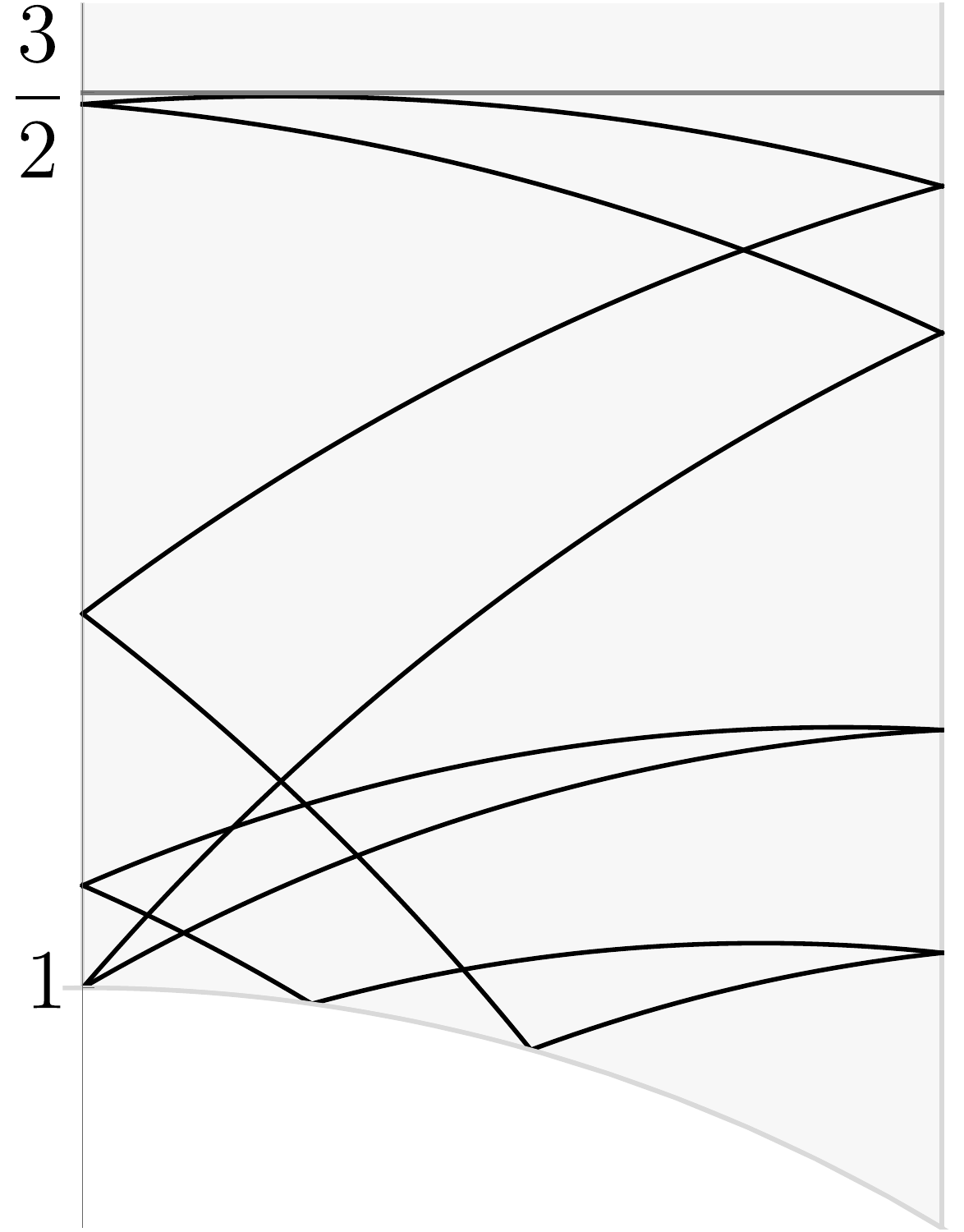}        
     \caption{Billiards associated to the points $\sqrt{5},\sqrt{8},\tfrac{\sqrt{221}}{5},\tfrac{\sqrt{1517}}{13}\in  \mathcal{M}_\infty$}
    \label{fig:markov}
\end{figure}

 It is also known that each of the points $<3$ in $\mathcal{M}_\infty$ is actually attained by a non-orientable periodic  billiard (see Theorem~75 of \cite{Di2})
 and it was conjectured by Frobenius \cite{Fr}, but is still open, that 
 the multiplicity of each of these points is one, meaning that the associated billiard is unique.
The part of the  Markov spectrum that is $>3$ is less understood but has been the subject of much research (see \cite{Ma},\cite{CF}).  It is not hard to show  that any open interval around $3$ contains uncountably many points of  $\mathcal{M}_\infty$ and that $\mathcal{M}_\infty$ is closed, but there are few completely definitive results known.
Building on pioneering work of  Hall~\cite{Ha},   Freiman~\cite{Fre} obtained one such result.  He showed  that 
  $[\mu,\infty)\subset \mathcal{M}_\infty$, where \[\mu=\tfrac{2221564096+283748\sqrt{462}}{491993569}=4.527829566\dots,\] with
no smaller value of $\mu$ being possible. 

We remark that the seminal work of Harvey Cohn, beginning with his 1955 paper \cite{Co}, revealed among other things a completely unexpected relation between the Markov spectrum
and the length spectrum of {\it simple} closed geodesics on the modular torus. His work has had a lasting impact on the study of simple closed geodesics
on Riemann surfaces.  It has also led to a better understanding of the Markov spectrum itself. See \cite{Co2} for a  summary  of some of his contributions.

  The value $\lambda_\infty(\B)^{-1}$  may be thought of as a measure of how close the billiard $\B$ gets to the corner of $\T$ at the cusp $i \infty.$
It is natural to ask how close a modular billiard must get
 to each of the  other corners $i$ and $\rho$  of $\T$.
 By the  distance of a billiard from a point $z\in \T$, denoted by $\delta_z(\B)$, we mean the infimum of the hyperbolic distance between points on the billiard and $z$. 
Let   
 \begin{equation}\label{mar2}
\lambda_z(\B)= (\sinh \delta_z(\B))^{-1}.
\end{equation}
A natural analogue of the Markov spectrum is 
\begin{equation}\label{genmar}
\mathcal{M}_z=\{\lambda_z(\B); \B \;\;\text{is a modular billiard}\}
\end{equation}
for a fixed $z \in \T.$ 

In this paper we will give results about $\mathcal{M}_\rho$ and  $\mathcal{M}_i$ that correspond to Markov's  for $\mathcal{M}_\infty$.
The result for $z=\rho$ is quite easy to prove. 
\begin{theorem}\label{rhothm}
 The smallest value in  $ \mathcal{M}_\rho$ is  $\sqrt{3}$, which is attained by $\mathcal{C}_{0}$.  
 The value $\sqrt{3}$ is a limit point  of $ \mathcal{M}_\rho$.
  \end{theorem}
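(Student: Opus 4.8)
The plan is to show (i) $\lambda_\rho(\mathcal{C}_0)=\sqrt3$, (ii) $\lambda_\rho(\B)\ge\sqrt3$ for every modular billiard $\B$, and (iii) that a suitable sequence of periodic billiards realizes values strictly above $\sqrt3$ converging to it. For (i) I would first record the elementary fact that, since $\rho=e^{i\pi/3}$ lies on the unit circle — the geodesic through $i$ orthogonal to the imaginary axis — the point of $\langle0,\infty\rangle$ closest to $\rho$ is $i$, with $\cosh d(i,\rho)=2/\sqrt3$; writing $\ell:=d(i,\rho)=\tfrac12\log3$ we get $\sinh\ell=1/\sqrt3$. Since $\mathcal{C}_0$ covers the segment $\langle i,\infty\rangle$ and $d(iy,\rho)$ is minimized at $y=1$, this gives $\delta_\rho(\mathcal{C}_0)=\ell$ and $\lambda_\rho(\mathcal{C}_0)=\sqrt3$.

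The heart of (ii) is the assertion that \emph{every} modular billiard meets the circular side $\langle i,\rho\rangle$ of $\T$. To see this, suppose $\B$ is induced by the geodesic $S$ and does not meet that side; then, unfolding, the complete geodesic $S$ never crosses any side of the tessellation $\G\T$ that is $\G$-equivalent to $\langle i,\rho\rangle$. But starting from $\T$ and crossing only the two \emph{vertical} types of side (those over $\re z\in\half\Z$, fixed by $B$ and $C$) one stays inside
\[
\Omega:=\bigcup_{g\in\langle B,C\rangle}g\T=\{z\in\H:|z-n|\ge1\text{ for all }n\in\Z\},
\]
and leaving $\Omega$ forces a crossing of a side equivalent to $\langle i,\rho\rangle$. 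Hence $S\subseteq\Omega$, which is impossible: the closure of $\Omega$ meets $\R\cup\{\infty\}$ only at $\infty$ (every real number lies within Euclidean distance $\half$ of an integer), while $S$ has two distinct endpoints there. Therefore $\B$ contains a point $w$ on the arc $\langle i,\rho\rangle$; every point of that arc is within $\ell$ of $\rho$ (the farthest being $i$ itself), so $\delta_\rho(\B)\le\ell$ and $\lambda_\rho(\B)\ge(\sinh\ell)^{-1}=\sqrt3$. With (i) this proves $\sqrt3=\min\mathcal{M}_\rho$, attained by $\mathcal{C}_0$.

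For (iii) I would take, for integers $a\ge5$, the periodic billiard $\B_a$ induced by the axis $g_a$ of the hyperbolic element $\pmatrix{a}{-1}{1}{0}=\pmatrix{1}{a}{0}{1}\pmatrix{0}{-1}{1}{0}$ — the semicircle of center $a/2$ and radius $\tfrac12\sqrt{a^2-4}$, whose endpoints $\tfrac12(a\pm\sqrt{a^2-4})$ tend to $0$ and $\infty$, so that $g_a$ degenerates onto the imaginary axis. A direct computation of the distance from a point to a circular geodesic gives $\sinh d(\rho,g_a)=(a-4)/\sqrt{3(a^2-4)}$, which is strictly less than $1/\sqrt3=\sinh\ell$ and increases to it as $a\to\infty$; since $g_a$ crosses $|z|=1$ inside $\T$ at $\re z=2/a$, this witnesses $\delta_\rho(\B_a)<\ell$, hence $\lambda_\rho(\B_a)>\sqrt3$. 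The point to prove is the matching lower bound $\delta_\rho(\B_a)\to\ell$, i.e.\ $d(g_a,\G\rho)\to\ell$: one must rule out that some point of $\G\rho$ approaches $g_a$ much more closely than $\rho$ does. The only points of $\G\rho$ that come near $g_a$ lie close to the two points $i$ and $a+i$ where $g_a$ grazes $|z|=1$, and for those one checks by the same distance formula that the distance to $g_a$ tends to $\ell$; the real work will be to bound $d(g_a,w)$ from below, uniformly in $a$, for all the remaining $w\in\G\rho$, using that $g_a$ otherwise lies deep in the cusp part of $\T$ together with the standard bound on how many elliptic points lie in a ball of fixed radius. Granting this, $\lambda_\rho(\B_a)=\sqrt{3(a^2-4)}/(a-4)$ for $a$ large, a strictly decreasing sequence of values $>\sqrt3$ tending to $\sqrt3$, so $\sqrt3$ is a limit point of $\mathcal{M}_\rho$.

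The main obstacle is exactly this last estimate. It is delicate because crude perturbations of $\langle0,\infty\rangle$ fail badly: for instance the geodesic between two rationals of small denominator near $0$ passes through an image of $\rho$ and gives $\delta_\rho=0$. The elements $\pmatrix{a}{-1}{1}{0}$ are chosen so that the induced billiard dips toward the side $\langle i,\rho\rangle$ only in the immediate vicinity of $i$, which is what keeps $\delta_\rho(\B_a)$ close to $\ell$.
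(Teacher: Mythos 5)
Your parts (i) and (ii) are correct, and (ii) is in substance a more carefully written version of the paper's own disk-packing argument: the paper packs disks of radius $\half\log 3$ about $\G\rho$ tangent to the Farey tessellation and argues that any geodesic's endpoints are separated by a chain of slightly larger disks, while you show directly that every complete geodesic must cross a $\G$-image of the arc $\langle i,\rho\rangle$ (since the region $\Omega$ it would otherwise be trapped in meets $\R\cup\{\infty\}$ only at $\infty$), and that every point of that arc lies within $\half\log 3$ of $\rho$. That reasoning is sound and complete.

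The second statement is where you have a genuine gap, and you name it yourself. Your family $\B_a$ only yields the inequality $\delta_\rho(\B_a)\le d(\rho,g_a)$, i.e. $\lambda_\rho(\B_a)\ge \sqrt{3(a^2-4)}/(a-4)$; to conclude that $\lambda_\rho(\B_a)\to\sqrt 3$ you need the opposite bound $\delta_\rho(\B_a)\ge \ell-o(1)$, i.e. that no other point of $\G\rho$ comes appreciably closer to $g_a$ than $\rho$ does, and this is exactly the step you defer with ``granting this.'' Without it you cannot exclude that all the $\lambda_\rho(\B_a)$ are bounded away from $\sqrt3$, so the limit-point claim is not established. The paper avoids this analytic difficulty by routing the computation through Proposition \ref{quad}: for its family $Q=(1,-\ell,-1)$ the quantity to be minimized over the class is $|2a+2c+b|$ evaluated on \emph{integral} forms equivalent to $Q$, hence an integer-valued function whose infimum is a minimum that can be located on the (very short) cycle of reduced forms attached to $[\,\overline{\ell}\,]$; this turns your ``uniform lower bound over all $w\in\G\rho$'' into a finite, checkable computation. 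Your axes of $\pmatrix{a}{-1}{1}{0}$ would work equally well if you made the same reduction (the associated integral form is $(1,-a,1)$, with cycle $(\overline{1,a-2})$), but as written, the purely geometric formulation leaves the decisive estimate unproved.
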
 

 The result for $z=i$ is  deeper and most of this paper is devoted to its proof.
\begin{theorem}\label{main}
  The three smallest values in $ \mathcal{M}_i$ are
  \[
\{\half\sqrt{21},\;\;\tfrac{2}{3}\sqrt{14},\;\;\tfrac{1}{3} (3+\sqrt{21})\}=\{2.29129...,\;2.49444...,\;2.52753...\}.
\] 
These three values are attained, respectively, by unique  billiards $\sC_1,\sC_2$ and $\sC_3$, each proper and  non-orientable.  Here  $\sC_1$ and $\sC_2$ are periodic billiards, while $\sC_3$ is not periodic.  The value $\tfrac{1}{3} (3+\sqrt{21})$ is a limit point of $ \mathcal{M}_i$.
\end{theorem}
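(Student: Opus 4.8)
The plan is to reduce the problem to a concrete combinatorial-arithmetic search over the cutting sequences of geodesics, exactly parallel to Markov's treatment of $\mathcal{M}_\infty$. First I would set up coordinates: a geodesic $S=\langle\a,\b\rangle$ with $\a,\b$ irrational has a doubly-infinite continued fraction expansion, and its images under $\G$ acting on $\T$ produce a cutting sequence in the letters $A,B,C$ (the three reflections). The quantity $\delta_i(\B)$ depends only on how close the $\G$-orbit of $S$ comes to the point $i$; since $i$ is the corner of $\T$ fixed by $A$ and $B$ (the stabilizer of $i$ in $\G$ is the Klein four-group $\langle A,B\rangle$), the distance from the orbit to $i$ is controlled by the subword structure near occurrences of the pattern that would carry a point toward $i$. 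Concretely I expect a clean formula: if $S$ passes at (signed) distance $d$ from $i$, then $\sinh d$ equals a simple expression in the ``tail'' continued fractions $[\![a_1,a_2,\dots]\!]$ and $[\![a_0,a_{-1},\dots]\!]$ at the relevant index, so that $\lambda_i(\B)=\sup$ over all indices of $1/\sinh d$, i.e. an infimum of a Markov-type function over all shifts of the cutting sequence. This is the analogue of the classical identity $\lambda_\infty(\B)=\sup_n\big([\![a_n,a_{n+1},\dots]\!]+[\![0,a_{n-1},a_{n-2},\dots]\!]\big)$.

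Next, granting such a formula, the characterization of the bottom of the spectrum becomes a finite optimization. I would show that to have $\lambda_i(\B)$ small — say below $\tfrac13(3+\sqrt{21})+\ep$ — the cutting sequence must avoid certain forbidden finite patterns (the ones that force the billiard close to $i$), and that the set of bi-infinite sequences avoiding those patterns is, up to the tail behavior, governed by a small finite graph (a sofic shift). Enumerating the closed orbits and the eventually periodic sequences in that graph, together with evaluating the distance formula on each, yields the three extremal billiards $\sC_1,\sC_2,\sC_3$ and shows the next values are bounded below by $\tfrac13(3+\sqrt{21})$. The explicit values $\tfrac12\sqrt{21}$, $\tfrac23\sqrt{14}$, $\tfrac13(3+\sqrt{21})$ should fall out as $1/\sinh d$ for the closest approach of three specific short periodic words; that $\sC_1,\sC_2$ are periodic and non-orientable is immediate from those words being palindromic (reversal-invariant), while $\sC_3$ corresponds to an eventually periodic but not periodic sequence — the limit point statement is then exactly the claim that $\tfrac13(3+\sqrt{21})$ is approached by a sequence of periodic words limiting (in the product topology on cutting sequences) to the defining sequence of $\sC_3$, the standard mechanism by which Markov values accumulate.

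The uniqueness (multiplicity one) of each of the three extremal billiards is the part that requires genuine care: one must rule out any other $\G$-orbit achieving the same value. I would handle this by a rigidity argument on the cutting sequence — if $\lambda_i(\B)$ equals one of the three values exactly, the distance formula forces equality in a chain of inequalities at every index where the supremum is attained, and chasing these equalities pins down the full bi-infinite sequence up to shift and reversal, hence the orbit. This mirrors (but is more tractable than) the open Frobenius uniqueness conjecture for $\mathcal{M}_\infty$, the difference being that near the bottom of $\mathcal{M}_i$ the constraint graph is small enough that uniqueness is provable rather than conjectural.

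The main obstacle I anticipate is deriving the exact distance formula \eqref{mar2} in terms of the cutting sequence and then correctly identifying the finite forbidden-pattern set: the stabilizer of $i$ is larger than a cyclic group, so a geodesic can approach $i$ through several geometrically distinct ``channels'' (roughly, passing near $i$ on the $A$-side, the $B$-side, or threading between), and keeping track of signs and of which reflections are active requires a careful case analysis of the local picture of $S$ relative to the tiling near $i$. Once that bookkeeping is done, the finiteness (Markov-type) statement and the enumeration are routine, and the limit-point claim is a short explicit construction.
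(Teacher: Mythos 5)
Your plan follows essentially the same route as the paper: code proper billiards by the doubly-infinite continued-fraction sequences of the endpoints, express $\lambda_i(\B)^{-1}$ as an infimum over all shifts of a small family of Markov-type functions of the tails (the paper's $\mu'_n,\mu''_n,\mu'''_n$ and their reversals, which realize exactly the three ``channels'' past $i$ that you anticipate as the main obstacle), eliminate forbidden subwords to isolate the three extremal sequences and get uniqueness by a monotonicity/equality-chasing argument, and exhibit the limit point via an explicit periodic family converging to the sequence of $\sC_3$. The only step you omit is disposing of improper billiards (rational endpoints, where the continued fractions terminate); the paper handles this separately by showing $\lambda_i(\B)\geq 3$ for improper $\B$, which is what legitimizes your restriction to irrational $\a,\b$ since all three claimed values are below $3$.
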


    \begin{figure}[ht]
  \includegraphics[height=2.2in]{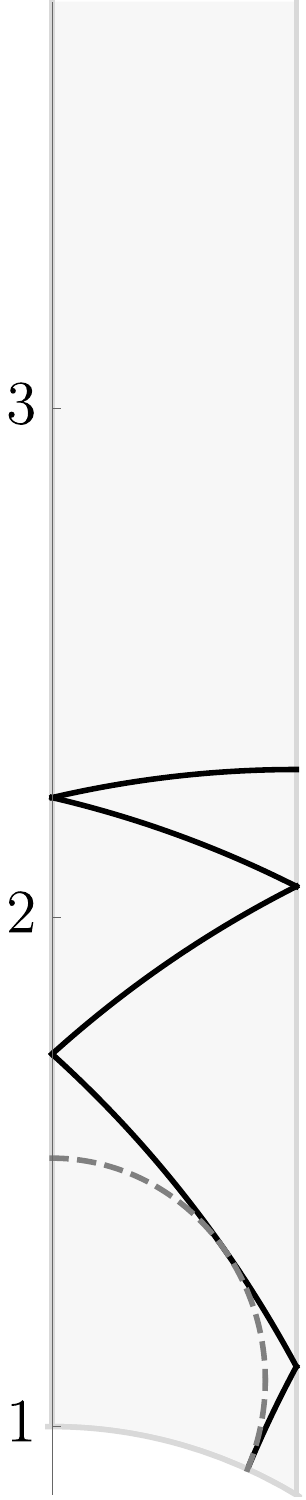}\qquad \qquad 
  \includegraphics[height=2.2in]{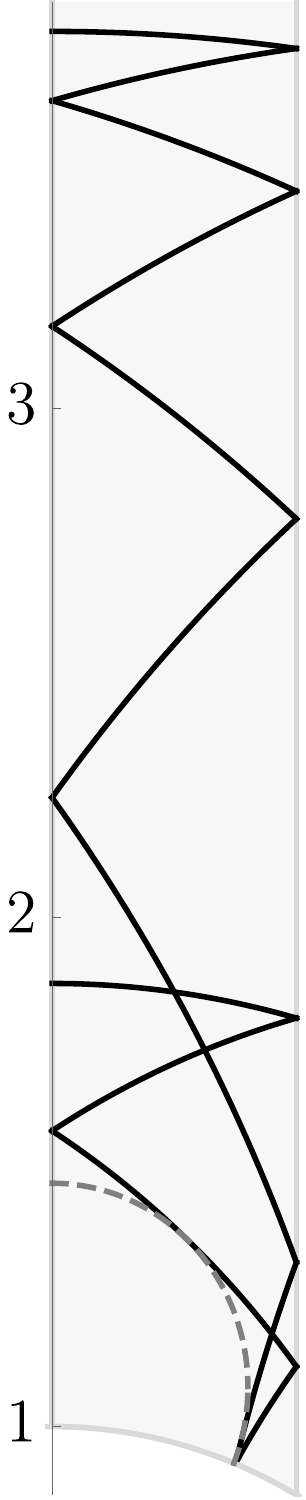}\qquad \qquad 
  \includegraphics[height=2.2in]{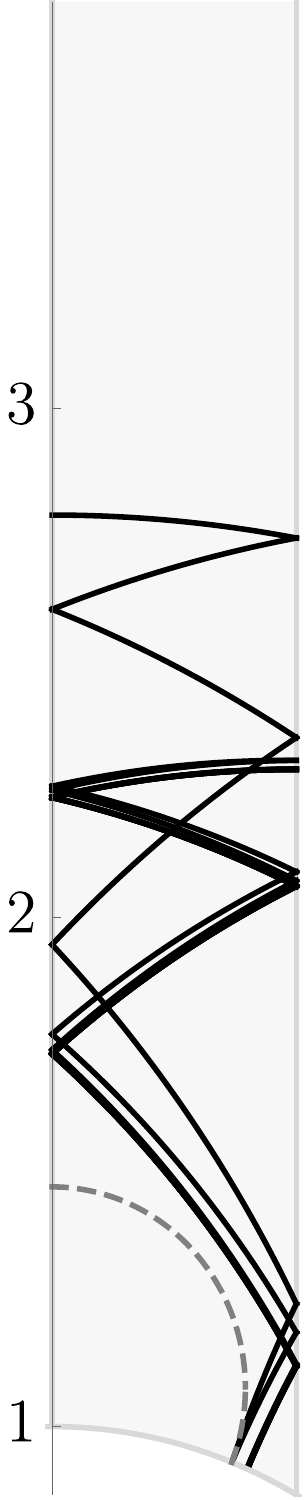}
         \caption{Billiards $\sC_1,\sC_2$ and $\sC_3$
          }
    \label{fig:C123}
\end{figure}

Explicitly, $\sC_1$ is induced by the geodesic $\langle \frac{1}{2}(1-\sqrt{21}),\frac{1}{2}(1+\sqrt{21})\rangle$, $\sC_2$ is induced by the geodesic $\langle \frac{1}{2}(2-\sqrt{14}),\frac{1}{2}(2+\sqrt{14})\rangle$ and $\sC_3$ is induced by the geodesic $\langle \frac{1}{2}(3-\sqrt{21}),\frac{1}{2}(5+\sqrt{21})\rangle$. 

In both cases the rest of the spectrum  invites investigation.
It is also of interest to consider the Markov spectrum $ \mathcal{M}_z$ for other points in $\T$,
in particular CM points.
In addition to distances from a fixed point, 
there are  other geometric quantities associated to non-generic modular billiards 
whose sets of values define  Markov-type spectra.
The purpose of this paper is to initiate a study of these generalizations by concentrating on  the simplest and most natural  examples and giving the analogues of Markov's results for them.

   In the next section we give a geometric interpretation of  Theorems \ref{mark}--\ref{main} in terms of the packing of discs in tessellations
 formed  by geodesic segments and prove the first statement of Theorem \ref{rhothm}.   In \S \ref{bqf} we recall the connection between modular billiards and real indefinite binary quadratic forms and then 
in \S \ref{qqf} give a formula for the hyperbolic distance between a billiard and a point.  This formula is  written  in terms of the minimum of an indefinite 
   quaternary quadratic form and is used to complete the proof of Theorem \ref{rhothm}. In \S \ref{proper} we introduce reduced forms and express $\lambda_i(\B)$ in terms of them.
   Then  we give in \S \ref{seq} the correspondence between proper modular billiards and doubly-infinite sequences of positive integers
 that connects billiards to simple continued fractions. This connection is exploited in \S \ref{seqspec}, \S \ref{ex} and \S \ref{final} to complete the proof of  Theorem \ref{main}. 

{\footnotesize
 \subsection*{Acknowledgement:} The second author thanks Alex Kontorovich for some enlightening discussions
 on the topics of this paper.
}

\section{Packing discs in hyperbolic tessellations}

Elementary geometric considerations provide 
some useful insight into  Theorems \ref{mark}--\ref{main} and serve to establish ``trivial" bounds for $\lambda_z(\B)$.
The problem of finding points of $\mathcal{M}_z$ is  equivalent to the problem of  fitting geodesics in $\H$ between discs of varying  radii around the images 
under $\G$ of $z$. 

Consider the case of the original Markov spectrum $\mathcal{M}_\infty$.
A Ford circle is the horocycle around the reduced rational number $p/q$ with radius $ \frac{1}{2q^2}$.
The set of all Ford circles form a packing of the tessellation $\G \langle i, \rho\rangle$.
See the left hand side of Figure \ref{ford}.
It is obvious that every geodesic $S$ must intersect infinitely many Ford circles.
 This gives that $\lambda_\infty(\B) \geq 2,$ or $\mathcal{M}_\infty \subset [2,\infty).$
 Ford \cite{Fo}  proved that if we reduce the radii  of the Ford circles to any \[r\geq r_0=\tfrac{1}{\sqrt{5}q^2}\]  it still forces intersection
 but if $r<r_0$ there are geodesics that intersect no circle.
  See the right hand side of Figure \ref{ford}.

\begin{figure}[ht]
\includegraphics[width=2.in]{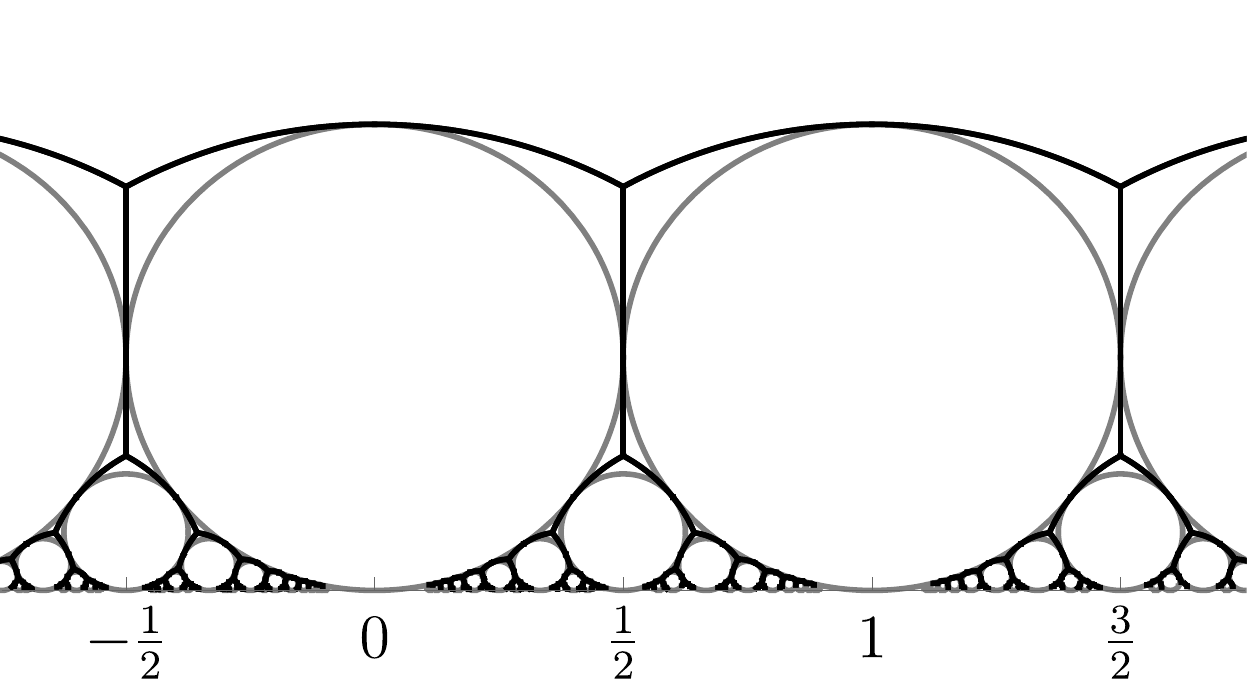} \qquad\qquad
\includegraphics[width=2.in]{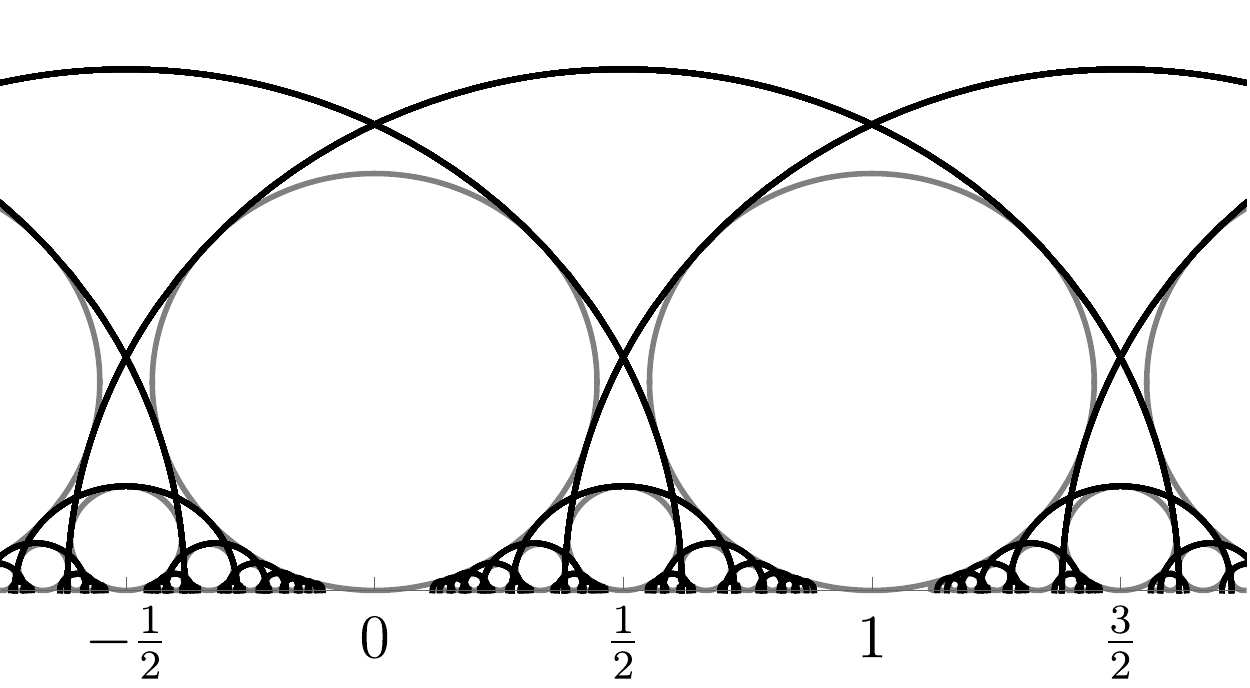}
         \caption{Ford circles}
    \label{ford}
\end{figure}

\subsubsection*{Proof of first statement of Theorem \ref{rhothm}}
The first statement of Theorem \ref{rhothm} may be proven this way.
To show that $\lambda_\rho(\B)\geq \sqrt{3}$ first observe that the hyperbolic
circles of radius $\half\log{3}$ around the points $\G\rho$ form tangent sequences that approach a dense subset 
of $\R$. It is straightforward to show that they are tangent to the Farey triangulation $\G \langle 0,\infty\rangle $. 
  Any geodesic  must  intersect these circles 
if their radius is made any larger since its endpoints will be separated by a sequence of circles.
See the left hand side of Figure \ref{r}.
\begin{figure}[ht]
\includegraphics[width=2.in]{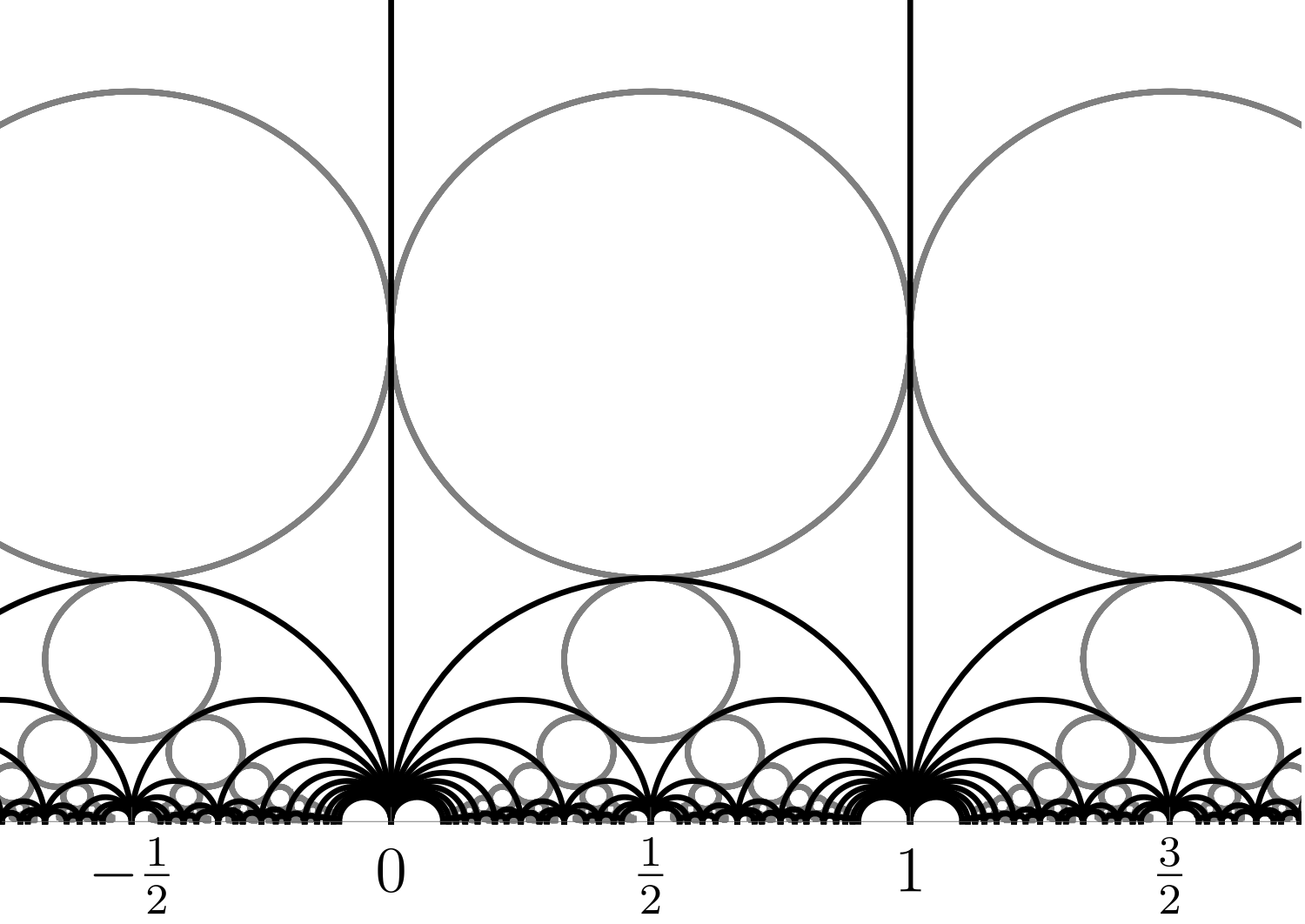} \qquad\qquad
\includegraphics[width=2.in]{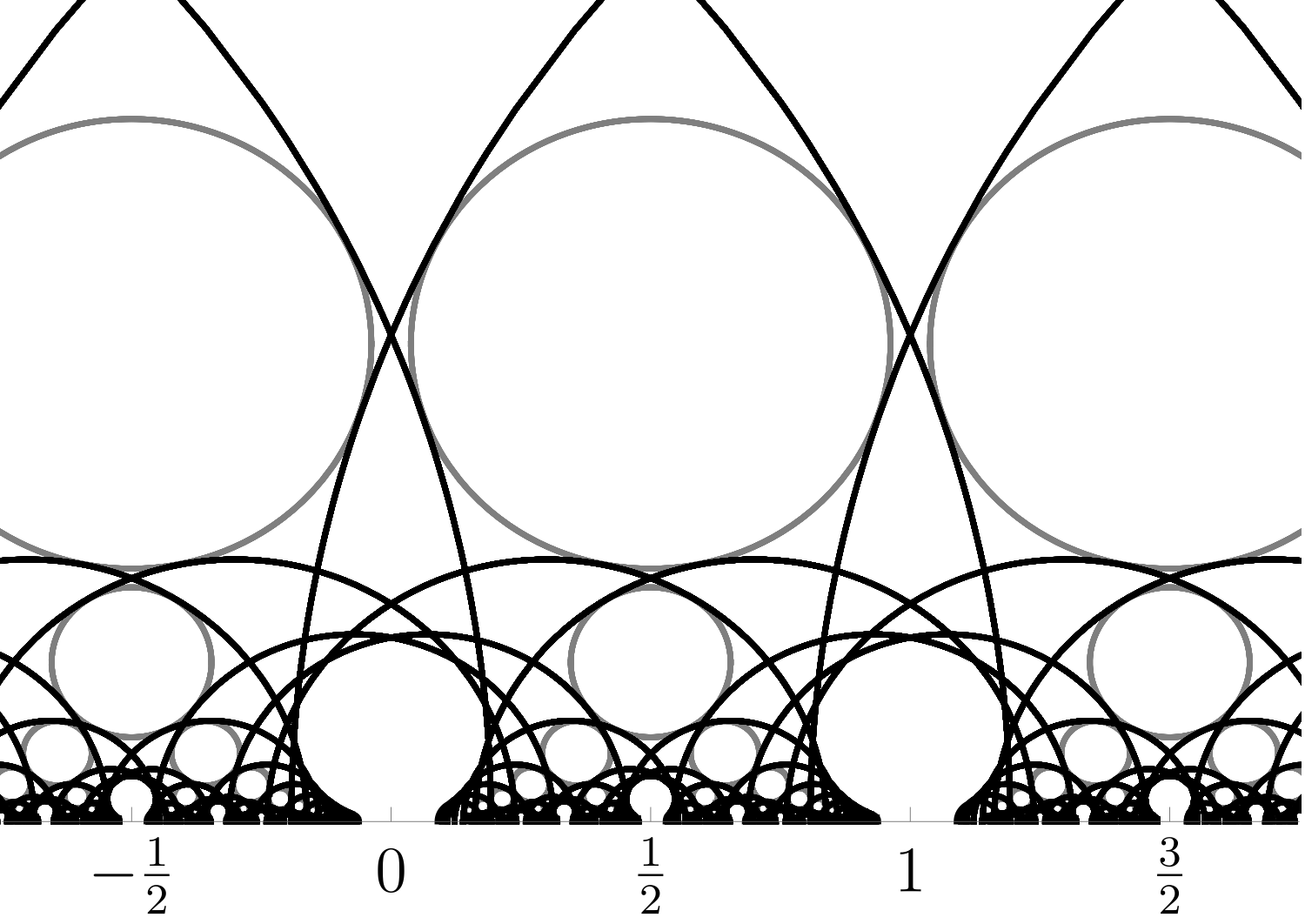}
         \caption{Illustrating Theorem \ref{rhothm} by  disks around images of $\rho$}
    \label{r}
\end{figure}

As we will prove below and is illustrated in the right hand side of Figure \ref{r}, if the radii are reduced by any positive amount there are infinitely many inequivalent geodesics 
that intersect no circle.  

\begin{figure}[ht]
  \includegraphics[width=2.in]{{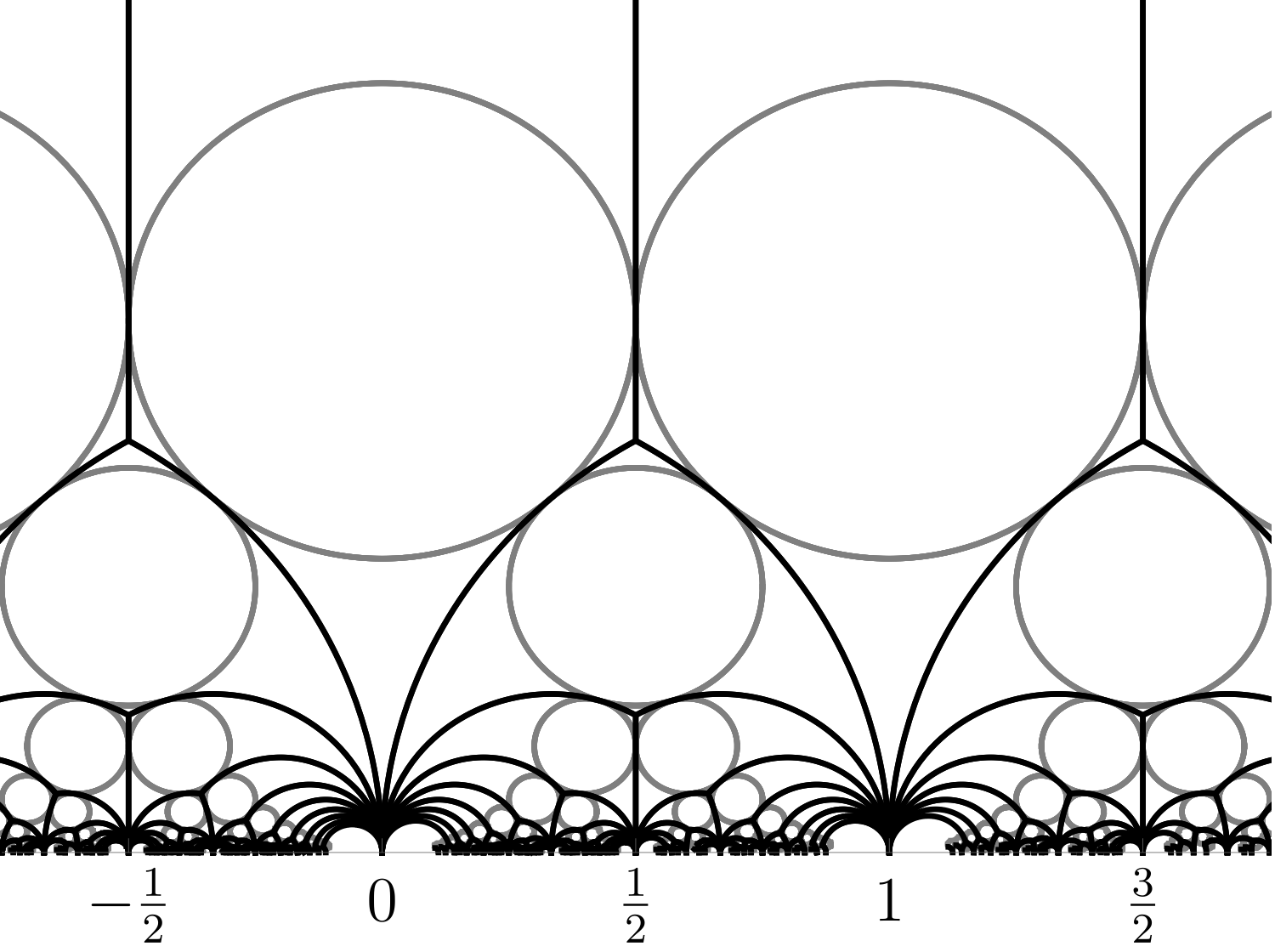}}   \qquad\qquad  
  \includegraphics[width=2.in]{{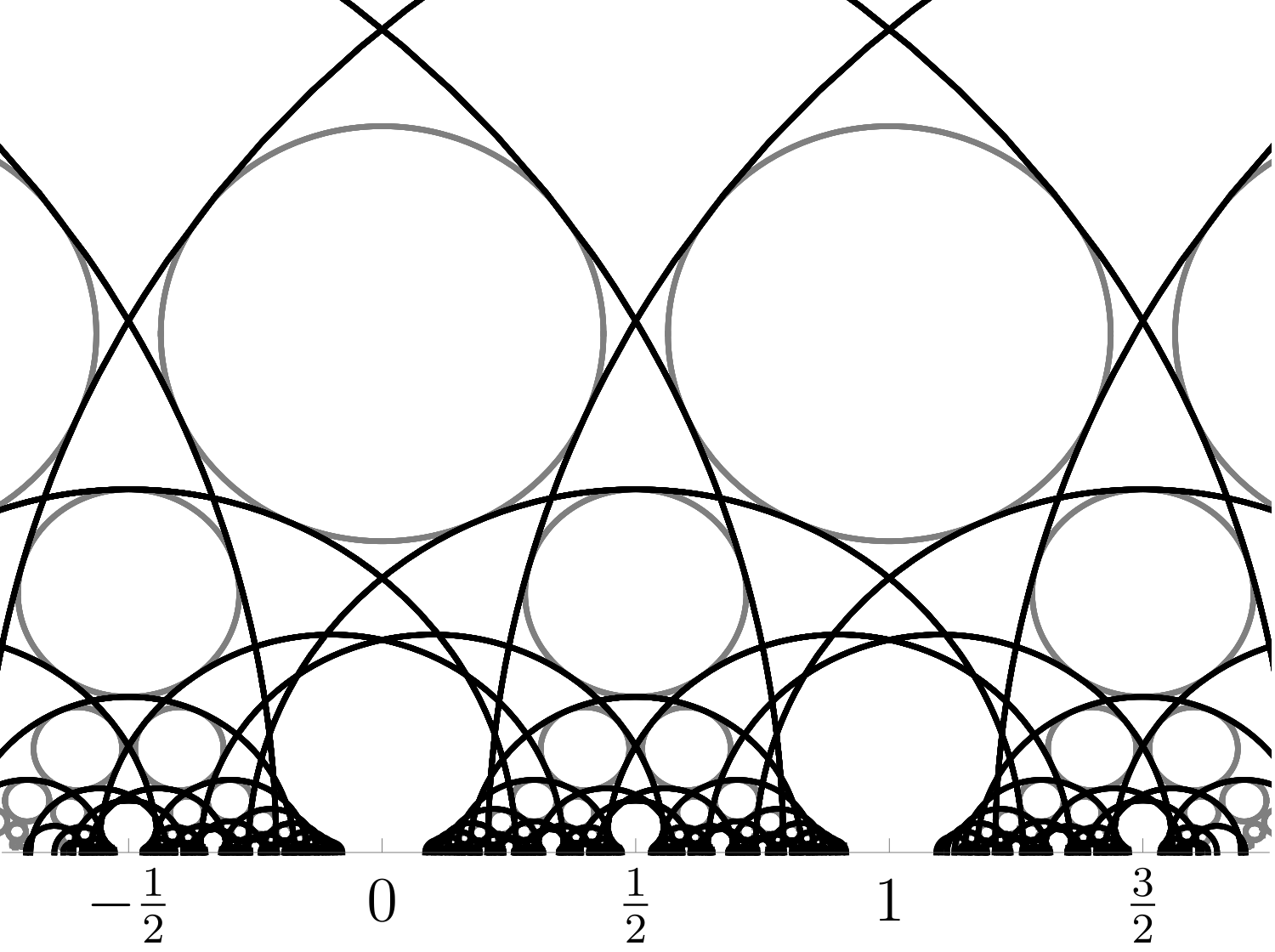}} 
    \caption{Disks around images of $i$ }
    \label{i}
\end{figure} 

Turning to Theorem \ref{main}, we can pack $\G \langle i,\infty \rangle $ by geodesic circles centered at the points $\G i$ of radius $\log(\frac{1+\sqrt{5}}{2})=0.481212\dots.$
This implies that $\lambda_i(\B) \geq 2$, as is illustrated in  the left hand side of Figure \ref{i}. This is weaker than the consequence of Theorem \ref{main} that $\lambda_i(\B) \geq 2.29129\ldots. $
as the right hand side of Figure \ref{i} illustrates.

This point of view sheds light on why the Markov-type result for the distance problem  is easier for $z=\rho.$ The corresponding tessellation 
in this case comprises complete geodesics, while in the other two cases  only geodesic segments.

For example, we easily get the first statement of the following result using the tessellation $\G\langle \frac{1}{2}, \infty \rangle=\G \langle i,\rho \rangle \cup\G \langle \rho,\infty\rangle$,
whose  associated billiard is $\mathcal{C}_{\frac{1}{2}}$. Figure \ref{tess0} illustrates the packing of this tessellation by disks around images of $\sqrt{-2}$ of radius $\frac{\log{2}}{2}$.

\begin{theorem}\label{2ithm}
 The smallest value in  $ \mathcal{M}_{2i}$ is  $\sqrt{8}$, which is attained by $\mathcal{C}_{\frac{1}{2}}.$  
 The value $\sqrt{8}$ is a limit point  of $ \mathcal{M}_{2i}$.
  \end{theorem}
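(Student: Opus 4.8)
The plan is to prove the statement in three steps: a direct computation giving $\lambda_{\sqrt{-2}}(\sC_{\frac12})=\sqrt8$; a packing argument giving $\lambda_{\sqrt{-2}}(\B)\ge\sqrt8$ for every billiard $\B$, so that $\sqrt8=\min\mathcal{M}_{2i}$; and a perturbation argument producing a strictly decreasing sequence in $\mathcal{M}_{2i}$ with limit $\sqrt8$. Throughout, the relevant point is $z=\sqrt{-2}=i\sqrt2\in\T$, which lies on the mirror $\{\re z=0\}$, and we set $r:=\half\log2$. For the first step: inside $\T$ the billiard $\sC_{\frac12}$ is the unit-circle arc $\langle i,\rho\rangle$ together with the vertical segment $\langle\rho,\infty\rangle$ on $\{\re z=\half\}$. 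Since $i\sqrt2$ lies on the imaginary axis, which meets $\{|z|=1\}$ orthogonally at $i$, the perpendicular from $\sqrt{-2}$ to $\{|z|=1\}$ has foot $i\in\langle i,\rho\rangle$ at distance $\int_1^{\sqrt2}dy/y=r$; the perpendicular from $\sqrt{-2}$ to $\{\re z=\half\}$ is the semicircle about $\half$ through $i\sqrt2$, with foot $\half+\tfrac32 i\in\langle\rho,\infty\rangle$ (as $\tfrac32>\tfrac{\sqrt3}{2}$), and there $\sinh d=\tfrac{1/2}{\sqrt2}=\tfrac1{2\sqrt2}=\sinh r$. Hence $\delta_{\sqrt{-2}}(\sC_{\frac12})=r$ and $\lambda_{\sqrt{-2}}(\sC_{\frac12})=(\sinh r)^{-1}=2\sqrt2=\sqrt8$.

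The second step runs exactly parallel to the packing proof of the first statement of Theorem \ref{rhothm}, now with the tessellation $\G\langle\tfrac12,\infty\rangle=\G\langle i,\rho\rangle\cup\G\langle\rho,\infty\rangle$ and the disks $D_g:=\{w\in\H:d(w,g\sqrt{-2})\le r\}$, $g\in\G$. Unfolding the billiard, $\delta_{\sqrt{-2}}(\B)=d\big(\sqrt{-2},\bigcup_{g\in\G}gS\big)$ whenever $\B$ is induced by $S$. By $\G$-equivariance and the first step each $D_g$ is inscribed in this tessellation, tangent to every edge it meets; moreover the $D_g$ have disjoint interiors, since the smallest nonzero distances in the orbit $\G\sqrt{-2}$ are $d(\sqrt{-2},A\sqrt{-2})=d(\sqrt{-2},C\sqrt{-2})=\log2=2r$ (the orbit points of maximal height $\sqrt2$ are $\{n+i\sqrt2:n\in\Z\}$, and one checks directly that nothing is closer). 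Given any geodesic $S=\langle\a,\b\rangle$: the edges of $\G\langle\tfrac12,\infty\rangle$ are complete geodesics with endpoints dense in $\R\cup\{\infty\}$, so $\a$ and $\b$ are separated by infinitely many of them and $S$ crosses infinitely many edges; threading through the tiles between consecutive crossings, $S$ must meet some $D_g$, because once it is packed by these inscribed disks the tessellation leaves no bi-infinite geodesic room to avoid them all. Therefore $\delta_{\sqrt{-2}}(\B)\le r$ and $\lambda_{\sqrt{-2}}(\B)\ge\sqrt8$; combined with the first step this gives $\sqrt8=\min\mathcal{M}_{2i}$, attained by $\sC_{\frac12}$. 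As with Theorem \ref{rhothm}, the ``no room'' assertion is most cleanly made rigorous through the quaternary-form distance formula of \S\ref{qqf}.

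For the third step we first upgrade the second: equality $\delta_{\sqrt{-2}}(\B)=r$ forces $S$ to be tangent to some $D_g$ and then pinned between the consecutive disks it would otherwise enter, which forces $S\in\G\langle\tfrac12,\infty\rangle$, so $\sC_{\frac12}$ is the \emph{unique} minimizer. Then, exactly as in the radius-reduction half of Theorem \ref{rhothm}, for each $r'<r$ there are infinitely many pairwise inequivalent geodesics $S$ with $d\big(S,\bigcup_{g\in\G}g\sqrt{-2}\big)\ge r'$ --- concretely, periodic billiards that shadow a translate of $\langle\tfrac12,\infty\rangle$ for a long but variable stretch before turning a far-out corner, a family transparent in the sequence correspondence of \S\ref{seq} --- so the associated billiards have $\sqrt8\le\lambda_{\sqrt{-2}}(\B)\le(\sinh r')^{-1}$. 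Choosing $r'_n\uparrow r$ and for each $n$ such a billiard $\B_n\ne\sC_{\frac12}$, uniqueness of the minimizer gives $\sqrt8<\lambda_{\sqrt{-2}}(\B_n)\le(\sinh r'_n)^{-1}\to\sqrt8$; passing to a strictly decreasing subsequence of the values $\lambda_{\sqrt{-2}}(\B_n)$ exhibits $\sqrt8$ as a limit point of $\mathcal{M}_{2i}$.

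The first step and the tangency/disjointness bookkeeping of the second are routine. The substantive points are (i) the ``no room'' claim --- that the packed disks genuinely block every complete geodesic --- which is best discharged via the quaternary form of \S\ref{qqf} rather than by ad hoc hyperbolic geometry, and (ii) in the third step, producing the perturbing family explicitly and verifying that its values stay \emph{strictly} above $\sqrt8$ while converging to it, i.e.\ that a small perturbation of $\sC_{\frac12}$ genuinely moves the closest-approach distance off the critical value $r$ rather than accidentally preserving it. That is where I expect the real work to lie.
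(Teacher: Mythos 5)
Your first two steps track the paper. The paper's proof of the first statement is exactly the packing argument you describe: the disks of radius $\tfrac{\log 2}{2}$ about the orbit $\G\sqrt{-2}$ are inscribed in the tessellation $\G\langle\tfrac12,\infty\rangle$, tangent to one another across the edges, and every complete geodesic is forced to meet one of them, giving $\lambda_{\sqrt{-2}}(\B)\geq\sqrt8$ with equality for $\mathcal{C}_{\frac12}$. (You were right to read the subscript ``$2i$'' as the point $\sqrt{-2}$; the figure caption and the value $\sqrt 8$ confirm this.) Your ``no room'' step is stated at the same level of informality as the paper's own argument for Theorem \ref{rhothm} (``its endpoints will be separated by a sequence of circles''), so I would not count it against you, though note the paper discharges it geometrically via the tangent chains, not via the quaternary form as you suggest.

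The third step is where you genuinely diverge from the paper, and where your proposal is not yet a proof. The paper establishes the limit-point statement ``in like manner'' to the second half of Theorem \ref{rhothm}: one writes down an explicit one-parameter family of integral forms $Q_\ell$ (the analogue of $\mathcal{A}_\ell$ with $Q=x^2-\ell xy-y^2$), applies Proposition \ref{quad} with $Q'=(1,0,2)$, and obtains a closed-form expression for $\lambda_{\sqrt{-2}}$ of the associated billiards that is visibly strictly greater than $\sqrt 8$ and strictly decreasing to $\sqrt 8$. That single computation delivers everything at once. Your route instead requires two unproven inputs: (i) a rigidity statement that $\delta_{\sqrt{-2}}(\B)=\tfrac{\log2}{2}$ forces $\B=\mathcal{C}_{\frac12}$ (your ``pinned between consecutive disks'' sentence is an assertion, not an argument --- a geodesic tangent to one disk must still be shown to be an edge of the tessellation rather than, say, a common tangent threading the cusp regions); and (ii) the existence, for every $r'<\tfrac{\log2}{2}$, of infinitely many inequivalent geodesics avoiding all disks of radius $r'$, which you describe only qualitatively (``shadow a translate \dots before turning a far-out corner''). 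Moreover, even granting (ii), your squeeze only gives $\lambda_{\sqrt{-2}}(\B_n)\to\sqrt8$; without (i), or without checking that the values $\lambda_{\sqrt{-2}}(\B_n)$ are not all equal to $\sqrt 8$, you cannot conclude that $\sqrt8$ is a limit point rather than merely an adherent point of multiplicity. You correctly flag these as ``where the real work lies,'' but that work is precisely what the explicit-family computation is designed to replace, and as written your step three remains a plan rather than a proof.
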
 

\begin{figure}[ht]
    \includegraphics[width=2.in]{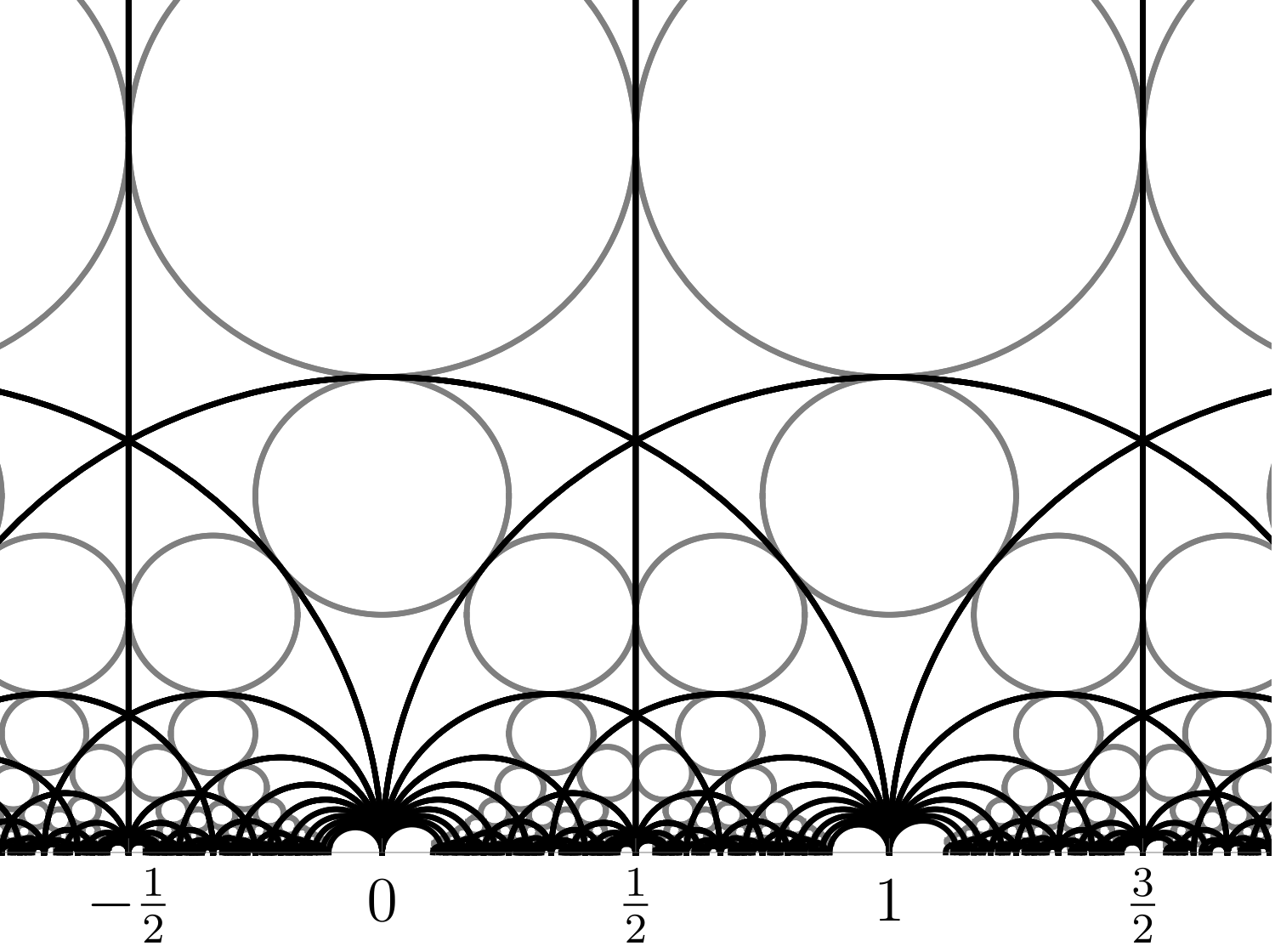}  \caption{Disks around images of $\sqrt{-2}$}
    \label{tess0}
\end{figure}

\section{Binary quadratic forms and billiards}\label{bqf}

To go beyond this basic geometric method we need a usable formula for the distance between a billiard and a point.  Binary quadratic forms provide the key.
In this section we establish their relation to modular billiards of the various kinds. 

For  $a,b,c\in \R$ with $d=\mathrm{disc}(Q)=b^2-4ac \neq 0$ let
\begin{equation}\label{Q}
Q(x,y)=ax^2+bxy+cy^2,
\end{equation}
which is  a non-singular real binary quadratic form.
Sometimes we will write  $Q=(a,b,c).$
Now $M=\pm \pmatrix{a'}{b'}{c'}{d'}\in \G= \mathrm{PGL}(2,\Z)$ acts on $Q$ by
\begin{equation}\label{equiv}
(Q|M)(x,y)\stackrel{\text{def}}{=}(\det M) Q(a'x+b'y,c'x+d'y).
\end{equation}
Clearly $Q|(M_1M_2)=(Q|M_1)|M_2$ for $M_1,M_2 \in \G$.
We say that two such forms 
$Q_1$ and $Q_2$ are {\it  equivalent }
if there is an $M\in \G$ such that
\[
(Q_1|M)(x,y)=Q_2(x,y).
\]
If $M\in \mathrm{PSL}(2,\Z)$ then we say that $Q_1$ and $Q_2$ are {\it  properly equivalent.}
 The class of forms that are equivalent to $Q$, but not necessarily properly equivalent to $Q$,  will be denoted by $[Q]$.
The discriminant $\mathrm{disc}(Q)$ is an invariant of $[Q]$.

If $Q=(0,b,c)$ with $b>0$  let $\a_Q=-\frac{c}{b}$ and $\b_Q=\infty$, while if $b<0$ with  let $\b_Q=-\frac{c}{b}$ and $\a_Q=\infty$.
Otherwise the roots of $Q(z,1)=0$   are given by
\begin{equation}\label{first}
 \a_Q=\frac{-b+\sqrt{d}}{2a} \;\;\;\mathrm{and}\;\;\;\b_Q=\frac{-b-\sqrt{d}}{2a}.
\end{equation}
In all cases  $\a_Q$ will be called the {\it first} root and $\b_Q$ the {\it second} root
of $Q$.
One checks that $d$, $\a_Q$ and $\b_Q$ uniquely determine $Q.$
Furthermore, using the generators $A,B,C$ from \eqref{ABC}, it follows
that for each $j=1,2$ and for any $M=\pm \pmatrix{a'}{b'}{c'}{d'}\in \G$
\begin{equation}\label{ff}
\a_{Q|M}=M^{-1}(\a_Q)\;\;\;\mathrm{and}\;\;\;\b_{Q|M}=M^{-1}(\b_Q),
\end{equation}
with $M(z)$ given
in the definition around \eqref{act} extended to all of $\C$.
It is important that the action of $\G$ on quadratic forms defined in \eqref{equiv} preserves the first and second roots.

The proofs of the following two results are straightforward.
\begin{proposition}\label{firc0}

(i) The map $Q \mapsto \a_Q$ determines a bijection  between classes $[Q]$ with a fixed negative discriminant that are represented by positive definite $Q$ and points of $ \T$.

\smallskip\noindent
(ii) The map  $Q \mapsto \langle \a_Q,\b_Q\rangle $ determines a bijection $[Q] \leftrightarrow \B$ between the classes $[Q]$ with a fixed positive discriminant and the set of all  modular billiards, where the  associated billiard  $\B$ is induced by $\langle \a_Q,\b_Q\rangle $.
\end{proposition}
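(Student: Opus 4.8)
The plan is to deduce both parts from one mechanism: parametrize the forms of a fixed discriminant by their roots, note via \eqref{ff} that this parametrization is $\Gamma$-equivariant, and then pass to $\Gamma$-orbits. The one geometric input is that $\T$ is a strict fundamental domain for the action \eqref{act} of $\Gamma=\PGL(2,\Z)$ on $\H$: every $\Gamma$-orbit in $\H$ meets $\T$ in exactly one point, as recalled in the discussion of tessellations above. Parts (i) and (ii) are then the ``elliptic'' ($d<0$) and ``hyperbolic'' ($d>0$) versions of the same argument.

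For (i), fix $d<0$; all real forms of that discriminant are definite. The assignment $Q\mapsto\a_Q$, with $\a_Q$ normalized to be the root of $Q(z,1)=0$ lying in $\H$, is a bijection from the positive definite forms of discriminant $d$ onto $\H$: given $z=x+iy\in\H$ one reads off $(a,b,c)$ uniquely from $-b/(2a)=x$, $\sqrt{|d|}/(2a)=y$ and $b^2-4ac=d$. I would then check that under this parametrization the equivalence \eqref{equiv} becomes the action \eqref{act} on $\H$: for $\det M=1$ this is precisely $\a_{Q|M}=M^{-1}(\a_Q)$ from \eqref{ff}, while for $\det M=-1$, after absorbing the harmless sign $Q\mapsto -Q$ (which does not move the roots) so as to stay among positive definite forms, the effect on $\a_Q$ is the orientation-reversing isometry $M^{-1}$ of \eqref{act}. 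Thus equivalent forms have equivalent roots and conversely, and passing to orbits and invoking the fundamental-domain property of $\T$ selects in each class $[Q]$ a unique representative with root in $\T$; this is the claimed bijection $[Q]\leftrightarrow\T$.

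For (ii), fix $d>0$; now the roots $\a_Q,\b_Q$ are distinct points of $\R\cup\{\infty\}$, and the sign of the leading coefficient of $Q$ --- equivalently, by \eqref{first} and the special convention there when that coefficient vanishes, the \emph{order} of the pair $(\a_Q,\b_Q)$ --- is part of the data, so $Q\mapsto\langle\a_Q,\b_Q\rangle$ is a bijection from the forms of discriminant $d$ onto the set of oriented geodesics of $\H$. On $\R\cup\{\infty\}$ the orientation-preserving and orientation-reversing actions of $\Gamma$ agree, so \eqref{ff} says immediately that this bijection intertwines \eqref{equiv} with the action of $\Gamma$ on oriented geodesics, a determinant $-1$ element carrying $\langle\a,\b\rangle$ to $\langle M\a,M\b\rangle$ head-to-head and tail-to-tail. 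Passing to $\Gamma$-orbits, the classes $[Q]$ of discriminant $d$ correspond bijectively to $\Gamma$-orbits $\Gamma S$ of oriented geodesics, and such an orbit is by definition exactly the data of a modular billiard $\B$, with $[Q]$ matched to the billiard induced by $\langle\a_Q,\b_Q\rangle$. The step I expect to be the only real obstacle is this orientation/sign bookkeeping at the orientation-reversing generators $A,B,C$: confirming that reflecting a directed geodesic segment across a side of $\T$ tracks its two ends consistently, that this matches exactly the $(\det M)$ twist written into \eqref{equiv} on the form side, and --- a small loose end --- that the degenerate trajectories bouncing out of the corners $i$ and $\rho$, and the improper billiards that contain a vertical segment, are absorbed into the same dictionary without changing anything.
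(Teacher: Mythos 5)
Your proposal is correct and is essentially the argument the authors intend: the paper offers no proof at all (it only declares the proofs of Propositions \ref{firc0} and \ref{firc} straightforward), and your route --- roots parametrize forms of a fixed discriminant, \eqref{ff} makes that parametrization $\G$-equivariant, and $\T$ is a strict fundamental domain --- is the standard way to fill that gap. One small slip in part (i): the sign change $Q\mapsto -Q$ does move the first root, sending $\a_Q$ to $\b_Q=\bar{\a_Q}$; far from being harmless, this conjugation is exactly what, composed with the plain M\"obius action of a determinant $-1$ matrix, returns the root to $\H$ and produces the orientation-reversing isometry you correctly assert.
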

Say $Q$ {\it represents zero} if $Q(x,y)=0$ for some $x,y\in \Z$ not both zero, that $Q$ is {\it reciprocal} if $Q$ is equivalent to $-Q$
and that $Q=(a,b,c)$ is {\it primitive integral} if $a,b,c\in \Z$ with $\gcd (a,b,c)=1.$
We have the following characterization of improper, non-orientable and periodic billiards in terms of quadratic forms.

\begin{proposition}\label{firc}
 Under the bijection $[Q] \leftrightarrow \B$ of (ii) of Proposition~\ref{firc0},  for any $Q\in [Q]$ 
\begin{enumerate}[label=(\alph*)]
\item
$Q$ represents zero if and only if $\B$ is improper,
\item
$Q$ is reciprocal if and only if $\B$ is non-orientable,
\item For some non-zero real $\kappa$ the form $\kappa Q$ does not represent zero  and is primitive integral
if and only if $\B$ is periodic.    \end{enumerate}
\end{proposition}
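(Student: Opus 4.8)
The plan is to treat (a) and (b) by direct manipulation with the roots of $Q$, and (c) by relating periodicity to the existence of a closed geodesic. The one genuinely geometric ingredient --- which I expect to be the main obstacle --- is isolated at the end. I first record three preliminaries. First, each condition on $Q$ in (a)--(c) is invariant under the action \eqref{equiv}: ``represents zero'' and ``does not represent zero'' because an $M\in\G$ acts invertibly on $\Z^{2}$; ``reciprocal'' by conjugating the equivalence $Q\sim-Q$ by $M$; and ``$\kappa Q$ is primitive integral'' because a $\mathrm{GL}(2,\Z)$ change of variables carries integral forms to integral forms of the same content. So ``for any $Q\in[Q]$'' is no real restriction. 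Second, by \eqref{ff} the set of endpoints occurring among the geodesics of the orbit $\G S$ is exactly $\{M(\a_Q),M(\b_Q):M\in\G\}$, the roots of the representatives of $[Q]$. Third, $\G$ acts through fractional linear maps with integer entries and acts transitively on $\Q\cup\{\infty\}$.

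For (a): by definition $\B$ is improper iff some geodesic $S'\in\G S$ is vertical, i.e.\ has $\infty$ as an endpoint --- for then, moving the foot of $S'$ into $[0,\half]$ by a suitable element of $\G$, the segment $S'\cap\T$ is a genuine vertical segment of $\B$. By the preliminaries this happens iff $\infty\in\{M(\a_Q),M(\b_Q):M\in\G\}$, hence iff $\a_Q$ or $\b_Q$ lies in $\Q\cup\{\infty\}$, i.e.\ iff $Q$ has a root in $\Q\cup\{\infty\}$. This is equivalent to $Q$ representing zero: a rational root $p/q$ in lowest terms gives $Q(p,q)=0$, the root $\infty$ means the leading coefficient vanishes so $Q(1,0)=0$, and conversely an integral zero $(m,n)\neq(0,0)$ of $Q$ gives the root $m/n\in\Q\cup\{\infty\}$. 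For (b): a short computation with the definitions shows that $-Q$ has the same discriminant as $Q$ and first and second roots $\b_Q$ and $\a_Q$; hence under the bijection $[Q]\leftrightarrow\B$ of Proposition~\ref{firc0}(ii) the class $[-Q]$ corresponds to the billiard induced by $\langle\b_Q,\a_Q\rangle$, which is $\B^*$. As that map is a bijection, $\B=\B^*$ iff $[Q]=[-Q]$, which is exactly the condition that $Q$ be reciprocal.

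For (c) I would use the principle that $\B$ is periodic --- i.e.\ has finite total hyperbolic length --- if and only if the inducing geodesic $S$ projects to a closed geodesic on $\G\backslash\H$, equivalently if and only if there is $\g\in\G$ of infinite order preserving $S$ as an oriented geodesic. Suppose such a $\g=\pm\pmatrix{a'}{b'}{c'}{d'}$ exists. It fixes the endpoints $\a_Q,\b_Q\in\R$ of $S$ --- also when $\det\g=-1$, since $\g$ then acts on $\H$ by $z\mapsto\g(\bar z)$ and the endpoints are real --- so $\a_Q$ and $\b_Q$ are the two distinct roots of the integral quadratic $c'z^{2}+(d'-a')z-b'$. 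This polynomial is not identically zero (else $\g=\pm I$), and having two distinct roots it has degree exactly $2$; thus $c'\neq0$, and the integral form $(c',d'-a',-b')$ is proportional to $Q$. Its discriminant is $(a'+d')^{2}-4\det\g$, namely $(a'+d')^{2}-4$ if $\det\g=1$ and $(a'+d')^{2}+4$ if $\det\g=-1$, and a short check shows this is a nonzero non-square in either case: if $\det\g=1$ then $\g$, fixing two boundary points and having infinite order, is hyperbolic with $|a'+d'|>2$, and $(a'+d')^{2}-4$ is a perfect square only when $|a'+d'|=2$; if $\det\g=-1$ then $a'+d'\neq0$, since a determinant $-1$, trace $0$ integer matrix squares to the identity and so cannot have infinite order, and $(a'+d')^{2}+4$ is a perfect square only when $a'+d'=0$. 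Dividing $(c',d'-a',-b')$ by $\gcd(c',d'-a',b')$ then yields a primitive integral form $Q_{0}$, proportional to $Q$, of non-square discriminant, which therefore represents zero nowhere; this is the required $\kappa Q=Q_{0}$.

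Conversely, suppose $\kappa Q=Q_{0}$ is primitive integral and does not represent zero, so $\mathrm{disc}(Q_{0})$ is a positive non-square integer. The classical theory of Pell's equation then provides a hyperbolic automorph $\g\in\mathrm{PSL}(2,\Z)\subset\G$, arising from a nontrivial solution of $t^{2}-\mathrm{disc}(Q_{0})\,u^{2}=4$, with $Q_{0}|\g=Q_{0}$; by \eqref{ff} this $\g$ fixes the roots of $Q_{0}$, which are $\a_Q$ and $\b_Q$, so $\g$ has infinite order and preserves $S$. Then $\B$ is the image in $\T$ of the finite-length circle $\langle\g\rangle\backslash S$ under the projection $\H\to\G\backslash\H$, hence has finite length, i.e.\ is periodic. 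The step I expect to need the most care is the geometric principle invoked at the start of (c): that finiteness of the total length of $\B$ forces $S$ to be closed --- for otherwise the projection of $S$ to $\G\backslash\H$ would either wind infinitely often around a closed geodesic or run off into the cusp, and so would have infinite length. Everything else is routine bookkeeping through the form--matrix dictionary of this section, the only subtleties being the $\mathrm{PGL}$ versus $\mathrm{PSL}$ distinction --- a glide reflection, not only a hyperbolic element, may be the periodizing transformation --- and the verification that the discriminants which arise are non-squares.
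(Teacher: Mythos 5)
The paper offers no proof of this proposition (it is dismissed as ``straightforward''), so there is no argument of the authors' to compare against; your write-up correctly supplies the omitted details, and parts (a) and (b) are exactly the expected root-chasing arguments. For (c) your reduction to the existence of an infinite-order element of $\G$ fixing both endpoints of $S$, and the automorph/Pell dictionary in both directions (including the determinant $-1$ and non-square-discriminant checks), is sound; the one step you rightly flag as delicate --- that finite total length of $\B$ forces such an element --- is most cleanly closed not by the ``winds around a closed geodesic or escapes to the cusp'' picture but by noting that if the oriented stabilizer of $S$ in $\G$ had finite order $k$, each directed segment of $\B$ would arise from at most $k$ of the crossings of $S$ through translates of $\T$, so the distinct segments would have total length at least $\tfrac1k$ times the (infinite) length of $S$. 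With that substitution the proof is complete.
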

 
 The study of  periodic billiards is the same as that of  (primitive) integral binary quadratic forms.
 It follows from Proposition~\ref{firc} (c) that each periodic modular billiard $\B$ may be assigned a unique positive integer
given by  \[d=\mathrm{disc}(\B)\stackrel{\text{def}}{=}\kappa^2\mathrm{disc}(Q)\] for any $Q\in [Q]$.
There are only finitely many periodic billiards with a given discriminant $d$
and each has the same length.

A formula for the length of $\B$  is determined by finding that  solution $(t,u)$ with $t,u \in \Z^+$ of
$t^2-du^2=\pm 4$
for which 
$\varepsilon= \tfrac{1}{2}(t+u\sqrt{d})$ is minimal. Then the length is $2\log \varepsilon.$
If $t^2-du^2=-4$  has a solution  then proper equivalence is the same as equivalence.
Otherwise each ordinary class consists of two proper classes.

Classically one says that a primitive integral $Q$ is improperly equivalent to itself when
 $Q|M=-Q$ where $M\in \G$ with $\det M=-1$ since then the change of
 variables $(x,y)\mapsto (x,y)M^t$ preserves $Q$.
It follows from Theorem 90 of \cite{Di} that this holds if and only if  some  $(a,b,c)\in [Q]$  has $a|b$.
 These forms are called {\it ambiguous} and one may say that the billiard $\B$ associated to $[Q]$ is ambiguous.
By  Proposition~\ref{firc} (b) we have that an ambiguous $\B$ is non-orientable. If $t^2-du^2=-4$  has a solution  then every non-orientable billiard is ambiguous. 
Otherwise
it is possible for a billiard to be non-orientable without being ambiguous. This happens when an associated $Q$ is properly equivalent to $-Q$.
Markov's billiards are examples.
If $d$ is fundamental, the number of non-orientable billiards is $2^{\omega(d)-1}$, where $\omega(d)$ is the number of distinct prime factors dividing $d$.

\section{Quaternary quadratic forms}\label{qqf}
A consequence of Proposition~\ref{firc},  one that is crucial for us,  is a  formula for \[\sinh \delta_z(\B)=(\lambda_z(\B))^{-1}\] from \eqref{mar2}. This involves finding the minimum of a certain indefinite quaternary quadratic form. 
For a fixed $z\in \T$ let $Q'(x,y)=a'x^2+b'xy+c'y^2$ with $a'>0$ and $d'=b'^2-4a'c'<0$ represent $z$. Also,  let $Q(x,y)=ax^2+bxy+cy^2$ with $d=b^2-4ac>0$ represent the modular billiard $\B$.

\begin{proposition}\label{quad}
Notation as above,
\begin{equation*}
\sinh \delta_z(\B)=  (d|d'|)^{-\frac{1}{2}}\inf_{[Q]}|2c'a+2a'c-b'b|.
\end{equation*}
\end{proposition}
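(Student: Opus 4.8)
The plan is to reduce the computation of $\delta_z(\B)$ to a purely linear-algebraic quantity by working on the hyperboloid model, where the point $z$ and the geodesic $\B$ each correspond to a vector and the hyperbolic distance is read off from a bilinear form. First I would attach to the positive definite form $Q'$ representing $z$ a vector $v_{Q'}$ in $\mathbb{R}^3$ equipped with the signature $(2,1)$ form coming from the discriminant, normalized so that $v_{Q'}$ lies on the sphere of ``radius'' $|d'|$ (the point model), and similarly attach to the indefinite form $Q$ representing $\B$ a vector $v_Q$ lying on the one-sheeted hyperboloid of ``radius'' $d$ (the geodesic model): concretely the map $Q=(a,b,c)\mapsto (a, \tfrac b2, c)$ or a variant thereof, so that $\mathrm{disc}(Q)$ becomes (a multiple of) the Lorentzian norm. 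The key identity to establish is that the Lorentzian inner product $\langle v_{Q'}, v_Q\rangle$ equals, up to an explicit constant, the bilinear combination $2c'a + 2a'c - b'b$ appearing in the statement — this is a direct computation with the three monomials and should be recorded as a lemma.

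Next I would recall (or derive) the standard formula from hyperbolic geometry: if $P$ is a point and $S$ a complete geodesic, with corresponding normalized vectors $p$ and $s$ in the respective models, then $\sinh(\mathrm{dist}(P,S)) = |\langle p,s\rangle| / (\|p\|\,\|s\|)$ with the norms taken in the appropriate normalization. Plugging in $\|v_{Q'}\|^2 \asymp |d'|$ and $\|v_Q\|^2 \asymp d$ and the inner product computed above yields
\[
\sinh\bigl(\mathrm{dist}(z, \langle \alpha_Q,\beta_Q\rangle)\bigr) = \frac{|2c'a + 2a'c - b'b|}{(d\,|d'|)^{1/2}}
\]
for a \emph{single} representative $Q$ of the class. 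Care must be taken with the two edge cases where $Q$ has a $0$ coefficient or $z$ lies on the geodesic through $\alpha_Q$ (giving distance $0$); both are consistent with the formula by continuity.

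The remaining, and genuinely substantive, step is to pass from the complete geodesic $\langle\alpha_Q,\beta_Q\rangle$ in $\H$ to the billiard $\B$ inside the fundamental triangle $\T$, and from a fixed $z\in\T$ to the distance $\delta_z(\B)$ as defined (infimum of distances from $z$ to points of the billiard segments in $\T$). Here I would use that $\B$ is exactly the image in $\T$ of the full $\G$-orbit of the geodesic, i.e.\ the segments of $\G\langle\alpha_Q,\beta_Q\rangle$ that meet $\T$, together with part (ii) of Proposition~\ref{firc0} identifying billiards with classes $[Q]$. Since $z\in\T$, folding by $\G$ gives
\[
\delta_z(\B) = \inf_{M\in\G}\,\mathrm{dist}\bigl(z,\ M\langle\alpha_Q,\beta_Q\rangle\bigr) = \inf_{M\in\G}\,\mathrm{dist}\bigl(M^{-1}z,\ \langle\alpha_Q,\beta_Q\rangle\bigr),
\]
and by \eqref{ff} translating $M$ through the action on forms turns the orbit of the geodesic into the orbit $[Q]$ while keeping $z$ (equivalently $Q'$) fixed — so the infimum over $M$ becomes the infimum over $[Q]$ in the formula, and the $\G$-invariance of $d$ and $d'$ keeps the denominator fixed. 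The point I expect to require the most care is verifying that this ``unfolded'' infimum over all of $\G$ — distance from $z$ to the \emph{full} orbit of the complete geodesic — really coincides with $\delta_z(\B)$ as defined via segments inside $\T$; one must check that no nearest point is lost by restricting to $\T$, which follows because the nearest point on some $G$-translate of the geodesic to $z\in\T$ can be pulled back into $\T$ by an element of $\G$ that also moves the geodesic within its orbit, and that the corner/boundary bounces do not create spurious shorter distances. Once that identification is secure, combining it with the single-representative distance formula and taking the infimum over $[Q]$ gives the claimed expression.
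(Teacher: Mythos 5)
Your proposal is correct and takes essentially the same route as the paper: the paper's entire proof is to quote the pointwise identity $\sinh \delta(z,S)=(d|d'|)^{-\frac{1}{2}}|2c'a+2a'c-b'b|$ from Beardon (p.~162) and then say ``the result follows,'' which is precisely your unfolding step over the orbit. Your Lorentzian derivation of that identity (the inner product $bb'-2ac'-2a'c$ attached to the signature $(2,1)$ form $b^2-4ac$) and your explicit check that the infimum over $\G$-translates of the complete geodesic agrees with $\delta_z(\B)$ --- which holds because $\T$ is a fundamental chamber for the reflection group, so the orbit representative in $\T$ minimizes distance to any $z\in\T$ --- only supply detail the paper leaves implicit.
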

\begin{proof}
A standard exercise in hyperbolic geometry shows that the hyperbolic distance $\delta(z,S)$ from $z$ to the geodesic $S=\langle \alpha_Q,\beta_Q\rangle$ satisfies
\begin{equation}\label{quadf}
\sinh \delta(z,S)=(d|d'|)^{-\frac{1}{2}}|2c'a+2a'c-b'b|.
\end{equation}
See e.g.~\cite[p.~162]{Be}. The result follows.
\end{proof}

We are now able to justify the second statements of Theorems \ref{rhothm} and \ref{2ithm}.  For fixed $ \ell \in \Z^+$ let $\mathcal{A}_\ell$ be the billiard associated to the quadratic form
\[
Q(x,y)=x^2-\ell xy-y^2.
\]
The case when $\ell=5$ is illustrated in Figure \ref{fig1}, which is typical in that of those geodesics in the orbit  intersecting $\T$, the one corresponding to $Q$ is the one that  gives the closest approach to $\rho.$
By Proposition~\ref{quad} we have
\begin{equation*}
\sinh \delta_\rho(\mathcal{A}_\ell)=  \frac{\ell}{\sqrt{3\ell^2+12}}.
\end{equation*}
Therefore $\lambda_\rho(\mathcal{A}_\ell)=\frac{\sqrt{3\ell^2+12}}{\ell},$ which decreases to the limit $\sqrt{3}$ as $\ell \rightarrow \infty$.
This  completes the proof of Theorem \ref{rhothm}.
 The second statement in Theorem \ref{2ithm} follows in like manner.

Note that we may rewrite \eqref{quadf} using \eqref{mar2} as
\begin{equation}\label{quat}
\lambda_z(\B)^{-1}= (d|d'|)^{-\frac{1}{2}}\inf_{x_1x_4-x_2x_3=\pm1} |Q''(x_1,x_2,x_3,x_4)|,
\end{equation}
where  for a {\it fixed} choice of $Q(x,y)=ax^2+bxy+cy^2$ representing  $\B$ we have
\begin{align*}\label{Q''}
Q''(x_1,x_2,x_3,x_4)=&2a'cx_1^2+2a'ax_2^2+2c'cx_3^2+2c'ax_4^2\\\nonumber &+2b'ax_2x_4+2b'cx_1x_3-2c'bx_3x_4-2a'bx_1x_2-b'bx_1x_4-b'bx_2x_3.
\end{align*}
Here $Q''$  is an indefinite quaternary quadratic form of signature $(2,2)$.
Observe  that we must minimize $Q''(x_1,x_2,x_3,x_4)$ subject to 
\begin{equation}\label{sc}
x_1x_4-x_2x_3=\pm1
\end{equation}
whereas it is more usual  to only require that $(x_1,x_2,x_3,x_4)\neq (0,0,0,0).$

The  formula for  $\lambda_\infty(\B)$ from the Markov spectrum corresponding to \eqref{quat} is simply
\begin{equation}\label{ma}
\lambda_\infty(\B)^{-1}= d^{-\frac{1}{2}}\inf_{(x_1,x_2)\neq (0,0)} |Q(x_1,x_2)|.
\end{equation}
In this sense the problem of finding $\lambda_i(\B)$ is more difficult than that of finding $\lambda_\infty(\B)$.
The study of the  minima of certain indefinite quaternary forms subject to \eqref{sc} goes back at least to a 1913
paper of Schur \cite{Sc}, which was an inspiration for this paper and deserves to be better known.

At this point we may obtain a good lower bound for $\lambda_i(\B)$ 
 when $\B$ is improper.
It is easy to check that an improper billiard is determined by some $\langle\a,\infty\rangle$, where $0\leq \a\leq \frac{1}{2},$
or equivalently by the form $Q=y(x-\a y)$ for this $\a.$
\begin{proposition}\label{imp}
For $\B$ an improper billiard we have that
\begin{equation*}
\lambda_i(\B)\geq  3
\end{equation*}
and this   is attained by the billiard determined by 
$\langle\frac{1}{3},\infty \rangle$.
\end{proposition}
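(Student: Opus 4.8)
The plan is to feed the data for an improper billiard into Proposition~\ref{quad} and see the quaternary minimum collapse to a one-variable Diophantine problem. Write $\B$ as the billiard induced by $\langle\a,\infty\rangle$ with $0\le\a\le\half$, so that a representative is $Q=(0,1,-\a)$ with $d=\mathrm{disc}(Q)=1$, and take $Q'=(1,0,1)$ for the point $i$, so that $a'=c'=1$, $b'=0$ and $d'=-4$. Proposition~\ref{quad} then gives $\sinh\delta_i(\B)=\tfrac12\inf_{[Q]}|2a+2c|=\inf_{[Q]}|a+c|$, reducing everything to the behavior of $a+c$ as $Q=(a,b,c)$ runs over the equivalence class of $(0,1,-\a)$.

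First I would parametrize that class. By \eqref{ff} a form in $[Q]$ has roots $N(\a)$ and $N(\infty)$ for some $N=\pm\pmatrix{p}{q}{r}{s}\in\G$, and since the discriminant equals $1$ one has $a=(\a_Q-\b_Q)^{-1}$ and $c=a\,\a_Q\b_Q$, hence $a+c=(1+\a_Q\b_Q)(\a_Q-\b_Q)^{-1}$. Substituting $\a_Q=\tfrac{p\a+q}{r\a+s}$ and $\b_Q=\tfrac pr$ and simplifying yields $|a+c|=|(p^2+r^2)\a+pq+rs|$, the degenerate case $r=0$ being checked directly. Thus $\sinh\delta_i(\B)=\inf|(p^2+r^2)\a+pq+rs|$, the infimum taken over all $\pm\pmatrix{p}{q}{r}{s}\in\G$.

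For the inequality $\lambda_i(\B)\ge3$ I would plug in two convenient matrices. Taking $(p,r)=(1,0)$, so that $s=\pm1$ and $q$ is free, makes the quantity $|\a+q|$, with infimum $\a$; taking $(p,r)=(1,1)$, so that $s=q\pm1$, makes it $|2\a+2q\pm1|$, with infimum $1-2\a$ on $[0,\half]$. Hence $\sinh\delta_i(\B)\le\min(\a,1-2\a)\le\tfrac13$, the right-hand maximum being attained only at $\a=\tfrac13$, which gives $\lambda_i(\B)\ge3$. For the billiard induced by $\langle\tfrac13,\infty\rangle$ I must then show that the infimum is no smaller, i.e.\ that $p^2+r^2+3(pq+rs)$ is never zero: otherwise $3\mid p^2+r^2$, forcing $3\mid p$ and $3\mid r$ because $-1$ is not a square modulo $3$, which contradicts $\gcd(p,r)=1$ (itself forced by $ps-qr=\pm1$). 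Combined with the upper bound at $\a=\tfrac13$ this pins down $\sinh\delta_i=\tfrac13$, attained, so $\lambda_i=3$ there.

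The one place that demands care is the algebraic reduction of $|a+c|$ to the binary form $|(p^2+r^2)\a+pq+rs|$ in the integers $p,q,r,s$, including the degenerate root when $r=0$; everything after that---a couple of explicit matrices for the upper bound and a congruence argument modulo $3$ for the lower one---is elementary. The conceptual point is just that for improper billiards the hypotheses $d=1$ and $b'=0$ degenerate the quaternary form of \eqref{quat} enough that its minimum can be read off directly.
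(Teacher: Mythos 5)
Your proof is correct and follows essentially the same route as the paper: the quantity $|(p^2+r^2)\a+pq+rs|$ you obtain from the root parametrization is exactly $|\tfrac12 Q''(p,q,r,s)|$ from \eqref{quat}, and your two matrices correspond to the paper's choices $(1,0,0,1)$ and $(1,-1,1,0)$, yielding the same bound $\min(\a,1-2\a)$. The one genuine addition is your congruence argument modulo $3$ showing that for $\a=\tfrac13$ the infimum is not smaller than $\tfrac13$; the paper's proof asserts that $\a=\tfrac13$ gives the minimum without spelling this out, so that step is a worthwhile supplement rather than a deviation.
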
\
\begin{proof}
Let $Q(x,y)=y(x-\a y)$. Then $\lambda_i(\B)^{-1}\leq  \min( \a, 1-2\a)$,
which is found by applying Proposition~\ref{quad} in the form \eqref{quat}  to 
\[
\half Q''(x_1,x_2,x_3,x_4)=-\a( x_1^2+x_3^2)-x_3x_4-x_1x_2
\]
and taking $(x_1,x_2,x_3,x_4)=(1,0,0,1), (1,-1,1,0)$. Thus $\a=1/3$ gives the minimum.
\end{proof}

\section{Proper billiards and reduced forms}\label{proper}

We say that a form $Q=(a,b,c)$ with discriminant $d>0$ that does not represent zero is {\it reduced}  if 
\[
-1<\b_Q<0 \;\;\;\mathrm{and}\;\;\;\a_Q>1,
\]
where $\a_Q$ and $\b_Q$ were defined in \eqref{first}.
A classical argument given in the proof of Theorem 76 in \cite{Di} may be adapted to prove
that for any proper billiard $\B$ the corresponding class $[Q]$ (as in Proposition~\ref{firc}) contains such reduced forms. 

Given a  form $Q=(a,b,c)$, let
\begin{equation}\label{qstar}
Q^*(x,y)=-Q(-y,x).
\end{equation}
If $Q$ is reduced then $Q^*$ is 
a reduced form that is properly equivalent to $-Q.$

Note  that the geodesics associated to the reduced forms do not necessarily account for all of the geodesic segments comprising a modular billiard. This fact is illustrated in Figure \ref{fig1}, where the single geodesic associated to a reduced form is shown in black. In the proof of Markov's Theorem \ref{mark}
 the maximal height of a billiard $\B$  will be approached by the heights of geodesics associated to reduced forms. Thus by \eqref{ma}  it follows that we have the simple formula
\begin{equation}\label{a}
\lambda_\infty(\B)^{-1}=  d^{-\frac{1}{2}}\inf_{Q\in [Q] \;\;\mathrm{reduced}} |a|.
\end{equation}

To obtain an analogous formula for $\lambda_i(\B)^{-1}$
we must consider some transforms of reduced forms.
This motivates the following definition.  For any for $Q=(a,b,c)$ with discriminant $d>0$ define
 \begin{equation}\label{nu}
 \nu(Q)=d^{-\frac{1}{2}}\min \big( |a+c|,|2a+b+c|,|2c+b+a|\big).
 \end{equation}

\begin{proposition}\label{reduc}
For a proper $\mathcal B$ we have
\begin{equation}
	\lambda_i(\mathcal B)^{-1} = \inf_{Q\in [Q] \text{ reduced}} \nu(Q),
\end{equation}
where the class $[Q]$ corresponds to $\mathcal B$.
\end{proposition}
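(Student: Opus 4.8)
The plan is to combine the distance formula from Proposition~\ref{quad} (in the form \eqref{quat}) with a careful analysis of which reduced forms and which unimodular substitutions $(x_1,x_2,x_3,x_4)$ realize the infimum. Throughout, fix $Q'=(a',b',c')$ representing $i$, say $Q'=(1,0,1)$ with $d'=-4$, so that $Q''$ simplifies considerably. Applying Proposition~\ref{quad} with this choice, we get $\lambda_i(\mathcal B)^{-1}=(4d)^{-1/2}\inf_{[Q]}|2a+2c-0\cdot b|=(4d)^{-1/2}\inf_{[Q]}|2a+2c|=d^{-1/2}\inf_{[Q]}|a+c|$, where the infimum ranges over all forms in the class. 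So the formula we must prove is equivalent to
\[
\inf_{Q\in[Q]}|a+c|=\inf_{Q\in[Q]\text{ reduced}}\min\big(|a+c|,|2a+b+c|,|2c+b+a|\big),
\]
after clearing the common factor $d^{-1/2}$. Note that $2a+b+c$ is the first coefficient of $Q|C^{-1}$ or a nearby neighbor, and $2c+b+a$ is obtained by applying $B$ then a shift — more precisely, $a+c$, $2a+b+c$, $2c+b+a$ are exactly $|a_*|$ for $Q_*$ ranging over $Q$ and its images under the reflections fixing the corner $i$ of $\mathcal T$; I would make this dictionary precise first, since it explains why these three quantities appear.

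The first step is the inequality $\leq$: every reduced $Q$ lies in $[Q]$, and each of the three forms whose first coefficient is $a+c$, $2a+b+c$, $2c+b+a$ also lies in $[Q]$, so the right-hand infimum is an infimum of $|a_*|$ over a sub-collection of forms in $[Q]$, hence is at least $\inf_{Q\in[Q]}|a+c|$. Wait — that gives $\ge$, not $\le$. The correct direction: $\inf_{[Q]}|a+c|$ is over ALL forms; the reduced-form expression involves a $\min$ of three quantities each of which is $|a+c|$ of some form in $[Q]$, so $\nu$ evaluated at a reduced form is $\ge d^{-1/2}\inf_{[Q]}|a+c|$, giving $\inf_{\text{reduced}}\nu(Q)\ge\lambda_i(\mathcal B)^{-1}$. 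So the content is the reverse inequality: given any $Q_0\in[Q]$, I must produce a reduced $Q\in[Q]$ with $\nu(Q)\le d^{-1/2}|a_0+c_0|$.

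For this I would use the reduction theory that is already invoked before Proposition~\ref{reduc}: the $\G$-orbit of reduced forms in $[Q]$ is governed by the continued-fraction / neighbor operators, and every geodesic segment of the billiard $\mathcal B$ that crosses $\mathcal T$ corresponds to some $\G$-translate whose reduced representative is explicit. The key geometric input is that the point where a geodesic $S=\langle\alpha_Q,\beta_Q\rangle$ with $\alpha_Q>1$, $-1<\beta_Q<0$ comes closest to $i$ is governed by $|a+c|$ up to the action of the stabilizer-type reflections $A,B,C$ fixing the sides through $i$; passing from an arbitrary $Q_0$ to a reduced one only decreases — or at worst leaves controlled — the relevant quantity because the closest approach to $i$ on the whole billiard is achieved on a segment inside $\mathcal T$, and that segment's form is either reduced or one of the three listed transforms of a reduced form. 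Concretely: take the segment of $\mathcal B$ realizing $\delta_i(\mathcal B)$ (or approaching it), pull it back into $\mathcal T$ by some $M\in\G$, and observe that $Q_0|M$ is reduced after possibly composing with $A$, $B$, or $C$ — the three cases producing exactly the three expressions $|a+c|$, $|2a+b+c|$, $|2a+b+c|$ in $\nu$.

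The main obstacle, I expect, is the bookkeeping in this last step: showing that the closest approach of the billiard to $i$ is always attained (or approached) on a segment whose form, once moved to a standard position relative to $\mathcal T$ and the corner $i$, is literally one of $\{Q,\ Q \text{ after reflecting the }i\text{-corner one way},\ Q\text{ after reflecting it the other way}\}$ for a reduced $Q$ — i.e., that no ``fourth'' configuration contributes a smaller value. This requires analyzing the local picture at the corner $i$ of $\mathcal T$ (an angle of $\pi/2$ cut out by the reflections $A$ and $B$), checking that a geodesic segment passing near $i$ within $\mathcal T$ has an image under $\langle A,B\rangle$ whose associated form realizes $\min(|a+c|,|2a+b+c|,|2c+b+a|)$, and ruling out that some segment lying outside $\mathcal T$ but in the orbit could be closer — which follows because $\mathcal T$ is a fundamental domain, so the distance from $i$ to the orbit $\G S$ equals the distance within $\mathcal T$ from $i$ to the billiard. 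Once that local analysis is pinned down, the two inequalities combine to give the stated formula.
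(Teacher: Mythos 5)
Your setup is correct and matches the paper's: taking $Q'=(1,0,1)$ in Proposition~\ref{quad} gives $\lambda_i(\mathcal B)^{-1}=d^{-1/2}\inf_{[Q]}|a+c|$; the inequality $\inf_{\text{reduced}}\nu(Q)\ge \lambda_i(\mathcal B)^{-1}$ is immediate because each of $|a+c|$, $|2a+b+c|$, $|a+b+2c|$ equals $|a+c|$ of some form in $[Q]$ (namely $Q(x,y)$, $Q(x+y,y)$, $-Q(x,x-y)$); and the entire content is the reverse inequality. But your argument for that reverse inequality has a genuine gap, which you half-acknowledge. You assert that the form of a near-minimizing segment, once moved into $\mathcal T$, ``is reduced after possibly composing with $A$, $B$, or $C$,'' and that the three terms of $\nu$ are the values of $|a+c|$ on the images of a reduced $Q$ under ``the reflections fixing the corner $i$.'' That dictionary is wrong: the stabilizer of $i$ in $\G$ is $\{1,A,B,AB\}$, and each of these leaves $|a+c|$ unchanged (as it must, since it preserves distance to $i$). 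The three terms of $\nu$ come instead from the substitutions $(x,y)\mapsto(x+y,y)$ and $(x,y)\mapsto(x,x-y)$ (with a sign), which do not fix $i$, and the reason exactly these three transforms suffice is not a local statement about the angle at the corner $i$.

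What is actually required --- and what the paper supplies --- is a case analysis of how a geodesic meeting $\mathcal T$ sits relative to the reduced forms in its class. If it crosses both vertical sides of $\mathcal T$, one normalizes its apex to lie right of the $y$-axis and translates by the unique integer $n\ge 0$ making the form reduced; the cases $n=0,1$ give the first two terms of $\nu$ directly, while for $n\ge 2$ one needs a genuine geometric comparison, namely that the reduced geodesic $\langle\alpha,\beta\rangle$ (with $-1<\beta<0$, $\alpha>4$) or its translate $\langle\alpha-1,\beta-1\rangle$ crosses the $y$-axis above $i$ and hence is at least as close to $i$ as the original. If instead the geodesic crosses a vertical side and the circular arc, the identity $\widetilde Q(x,y)=-Q(x,x-y)$ with $Q$ reduced produces the third term. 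None of this appears in your proposal; it is exactly the step you defer as ``the main obstacle,'' and without it the inequality $\inf_{[Q]}|a+c|\ge\inf_{\text{reduced}}\nu(Q)$ is unproved.
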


\begin{proof}
To prove this we will show that for any geodesic in $\H$ that intersects $\mathcal T$, we can find a
form $Q = (a, b, c)$ such that either $Q$ or $-Q$ is reduced and that the geodesic corresponding
to $Q(x, y)$, $Q(x + y, y)$, or $-Q(x, x - y)$ is as close or closer to $z = i$. 
The result then follows by Proposition~\ref{quad} applied with $Q' = (1,0,1)$ since the terms in \eqref{nu} correspond exactly to these three cases.

Note that we may restrict our attention to geodesics $S$ that either (i) cross both vertical
sides of the boundary of $\mathcal T$ or (ii) cross the right vertical side and the circular arc of this
boundary. 
This is because the reflection across the $y$-axis of a geodesic that crosses the left
hand vertical side and the circular arc will cross both vertical sides and will also have the
same distance from $z = i$.

In case (i) we may assume that the apex of the geodesic $S$ lies on or to the right of the $y$-axis; if not, the reflection across the $y$-axis of $S$ will have that property and be the same distance from $i$.
Let $\til \alpha,\til \beta$ be the roots of a form $\til Q$ associated to $S$.
Then we have
\begin{equation*}
	\til \alpha > \tfrac 12, \qquad \til \beta < 0, \qquad \til \alpha - \til \beta \geq 2, \quad \text{ and } \quad \til \alpha + \til \beta \geq 0.
\end{equation*}
Thus there is a unique integer $n\geq 0$ such that the form $Q(x,y) = \til Q(x-ny,y)$, obtained by shifting $S$ to the right $n$ units, is reduced.
If $n=0$ or $1$ then we are done because either $\til Q$ is reduced already or $\til Q(x,y) = Q(x+y,y)$.
Suppose that $n\geq 2$.
Then $\til \beta < -2$ and $\til \alpha >2$ which implies that the roots $\alpha,\beta$ of $Q$ satisfy
\begin{equation*}
	-1 < \beta < 0 \qquad \text{ and } \qquad \alpha > 4.
\end{equation*}
If $\langle \alpha,\beta \rangle$ intersects the $y$-axis above $i$, then a simple geometric argument shows that $\langle \alpha,\beta \rangle$ is closer than $S$ to the point $z=i$.
If not, then the geodesic $\langle \alpha-1, \beta-1 \rangle$ associated to $Q(x+y,y)$ crosses the $y$-axis above $i$ (since $(\alpha-1)(\beta-1)\leq -2$); hence either $\langle \alpha-1, \beta-1 \rangle$ or $\langle \alpha, \beta \rangle$ is closer than $S$ to the point $z=i$.

In case (ii) we may assume that either $Q = \til Q$ is reduced or that
\begin{equation*}
	0<\til \beta < \tfrac 12 \qquad \text{ and  }\qquad \til\alpha > 1,
\end{equation*}
in which case $Q(x,y) = -\til Q(x,x-y)$ is reduced.
Since this is equivalent to the identity 
\begin{equation*}
	\til Q(x,y) = -Q(x,x-y),
\end{equation*}
we are done.
\end{proof}

\section{Billiards and Sequences}\label{seq}

Simple continued fractions are crucial  in Markov's proof of  Theorem \ref{mark} and in our proof of Theorem \ref{main}. We denote one by
\[
[k_1,k_2,k_3,k_4,\dots]= k_1+\cfrac{1}{k_2+\cfrac{1}{k_3+\cfrac{1}{k_4+\dots}}},
\]
 whose finite version ending in $\frac{1}{k_n}$ is written
$[k_1,k_2,\dots,k_n].$
Here we will present the beginnings of Markov's method in a form that we  adapt in the next section to prove Theorem \ref{main}.
The method relates chains of reduced quadratic forms to doubly-infinite sequences of positive integers.
We give a somewhat novel treatment of this correspondence based on equivalence rather than proper equivalence.

For a reduced  $Q$  define the doubly infinite sequence $K_Q$ by expanding
\begin{equation}\label{a,b}
\a_Q=[k_1,k_2,k_3,\dots]\;\;\;\mathrm{and}\;\;\; -\b_Q=[0,k_0,k_{-1},k_{-2},\dots]
\end{equation}
into simple continued fractions
and setting
 \begin{equation}\label{eq:map}K_Q=(\dots, k_{-1},k_0,k_1,k_2,\dots).
 \end{equation}
We shall refer to $k_1$ as the first entry of $K$.

Say two doubly infinite sequences of positive integers 
$K=(k_n)$ and $L=(\ell_{n})$ are {\it equivalent} if there is a $j \in \Z$  such that $k_{ n}=\ell_{n+j}$ for all $n\in \Z$
and {\it properly equivalent} if there is a  $j \in 2\Z$  such that $k_{ n}=\ell_{n+j}$ for all $n\in \Z$.
 If $K=(k_n)$ define 
the {\it reversal} of $K$ to be  $K^*=(k_{1-n})$.

\begin{proposition}\label{prop:map}
The map $Q\mapsto K_Q$ determines a bijection between classes of  forms  with a fixed positive discriminant that do not represent zero and equivalence classes  of sequences of positive integers. It also determines a bijection between proper equivalence classes of  forms  with a fixed positive discriminant that do not represent zero and proper equivalence classes of sequences of positive integers.
Furthermore $K^*_Q=K_{Q^*}. $
\end{proposition}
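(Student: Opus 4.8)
The plan is to establish the correspondence in three stages: first verify that $K_Q$ is well-defined on classes, then that it is injective, then surjective, handling the ordinary and proper versions in parallel and deducing the reversal identity at the end. For well-definedness, I would first check that the continued-fraction expansions in \eqref{a,b} make sense: because $Q$ is reduced and does not represent zero, $\alpha_Q$ and $\beta_Q$ are irrational with $\alpha_Q>1$ and $-1<\beta_Q<0$, so $\alpha_Q$ has a standard simple continued fraction with $k_1\geq 1$, and $-\beta_Q\in(0,1)$ has one beginning with a $0$; thus all $k_n\geq 1$ for $n\neq$ the slot filled by that leading $0$, and after relabeling every entry of $K_Q$ is a positive integer. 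The key point is then to understand how $K_Q$ changes when we replace $Q$ by a properly equivalent or equivalent form. Since $\mathrm{PSL}(2,\Z)$ is generated by $z\mapsto z+1$ and $z\mapsto -1/z$, and reducedness is not preserved by arbitrary generators, the right move is the classical one: among forms in $[Q]$, the reduced ones form a single cycle under the ``$\rho$-operation'' $Q=(a,b,c)\mapsto (c, b', a')$ that corresponds to $\alpha\mapsto 1/(\alpha - k_1)$ on roots (equivalently, stripping the leading partial quotient $k_1$ off $\alpha_Q$ and prepending it to $-\beta_Q$). This operation shifts the sequence $K_Q$ by one index. One reduced form goes to another, and iterating cycles through all reduced forms in the class; hence $K_Q$, up to shift, depends only on $[Q]$ — and shift by one is an \emph{odd} shift, so the subtlety distinguishing ordinary from proper equivalence is exactly whether we allow odd shifts.

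For injectivity I would argue that the doubly-infinite sequence $K_Q$, via \eqref{a,b}, reconstructs the unordered pair $\{\alpha_Q,\beta_Q\}$ up to the $\mathrm{PGL}(2,\Z)$-action, hence (by Proposition~\ref{prop:map}'s sibling Proposition~\ref{firc0}(ii) and the discussion of \eqref{ff}) reconstructs $[Q]$ once the discriminant is fixed: a shift of $K$ by $j$ corresponds to applying the $j$-th power of the $\rho$-operation, which stays inside the same class, so equivalent sequences give equivalent forms, and a shift by an even $j$ keeps us in the same proper class. For surjectivity, given any doubly-infinite sequence $(k_n)$ of positive integers, set $\alpha:=[k_1,k_2,\dots]$ and $\beta:=-[0,k_0,k_{-1},\dots]$; these are well-defined irrationals with $\alpha>1$, $-1<\beta<0$, they are distinct, and the quadratic irrationality is \emph{not} automatic for a general sequence — so here I must be a little careful: the claim that $[Q]$ ranges over forms of a \emph{fixed} discriminant means we should read the statement as a bijection for each fixed $d$, and on that side the sequences that occur are the eventually-periodic-in-both-directions ones; alternatively, one drops the discriminant condition and gets all sequences matched with all $\mathrm{PGL}(2,\Z)$-orbits of geodesics. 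I would follow the paper's framing: for fixed $d$, Lagrange's theorem says $\alpha,\beta$ quadratic irrational $\iff$ the tails of $K$ are periodic, and the reduced form $Q$ with roots $\alpha,\beta$ and discriminant $d$ then satisfies $K_Q=(k_n)$ by construction; this gives surjectivity and simultaneously pins down the proper-class refinement by tracking the parity of the period.

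Finally the reversal identity: from \eqref{qstar}, $Q^*(x,y)=-Q(-y,x)$ is the action of $\pm\pmatrix{0}{-1}{1}{0}$ up to the determinant twist, and by \eqref{ff} this sends $\alpha_Q\mapsto -1/\alpha_Q$... more to the point, one computes directly that $Q^*$ has first root $-1/\beta_Q$ and second root $-1/\alpha_Q$ (using that $Q^*$ is reduced when $Q$ is, as already noted in \S\ref{proper}). Then $\alpha_{Q^*}=-1/\beta_Q=[k_0,k_{-1},k_{-2},\dots]$ and $-\beta_{Q^*}=1/\alpha_Q=[0,k_1,k_2,\dots]$, so reading off \eqref{eq:map} gives $K_{Q^*}=(\dots,k_2,k_1,k_0,k_{-1},\dots)$, i.e. the $n$-th entry of $K_{Q^*}$ is $k_{1-n}$, which is precisely $K_Q^*$. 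The main obstacle is the first stage: pinning down exactly how the cycle of reduced forms in a class matches a single shift-orbit of sequences, and in particular getting the even-versus-odd-shift bookkeeping right so that proper equivalence on forms corresponds to proper equivalence on sequences — this is where the novelty the authors advertise (working with equivalence rather than proper equivalence) has to be made precise, and it requires carefully checking that the $\rho$-operation is an odd shift and that $Q\mapsto Q^*$ (an orientation-reversing element) accounts for the remaining index-reflection symmetry.
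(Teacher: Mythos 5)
Your treatment of well-definedness, injectivity, and the reversal identity is essentially the paper's argument in different clothing: the paper makes your ``$\rho$-operation'' explicit by writing down the chain $Q_n=(a_{n+1},-b_n,-a_n)$ with $\alpha_{Q_n}=r_n$ and $-\beta_{Q_n}=s_n$, cites Dickson for the fact that the $Q_n$ exhaust the reduced forms in $[Q]$ (and the $Q_{2n+1}$ those in the proper class, matching your odd-shift bookkeeping), and computes $\alpha_{Q^*}=-1/\beta_Q$, $-\beta_{Q^*}=1/\alpha_Q$ exactly as you do.

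The surjectivity step, however, contains a genuine error. The forms in this paper are \emph{real} binary forms (see \eqref{Q}: $a,b,c\in\R$), not integral ones, so there is no Lagrange-type constraint to worry about: ``fixed discriminant'' is only a normalization, since $d$ together with the ordered pair $(\alpha_Q,\beta_Q)$ determines $Q$ uniquely. Given \emph{any} doubly-infinite sequence of positive integers, the numbers $\alpha=[k_1,k_2,\dots]>1$ and $\beta=-[0,k_0,k_{-1},\dots]\in(-1,0)$ are irrational, and the unique real form of discriminant $d$ with these as first and second roots is reduced and does not represent zero (a form represents zero precisely when one of its roots is rational or $\infty$). Hence every equivalence class of sequences is hit, which is exactly what the proposition asserts and what the paper means by ``clearly every sequence $K$ arises from some reduced form.'' Your proposed restriction to sequences that are eventually periodic in both directions would make the map fail to be surjective onto the stated target and in fact contradicts the statement being proved; eventual periodicity is the criterion for the \emph{periodic} billiards (the integral forms of Proposition~\ref{firc}(c)), not for the bijection here. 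The fix is simply to drop the appeal to Lagrange's theorem and use the observation above.
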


\begin{proof}
For each  $n \in \Z$ let
\begin{equation}\label{rs}
r_n=[k_n,k_{n+1},\dots]\;\;\; \mathrm{and}\;\;\;s_n=[0,k_{n-1},k_{n-2},\dots].
\end{equation}
Thus $r_1=\a_Q$ and $s_1=-\b_Q$ and also
\begin{equation}\label{recu}
r_{n-1}=\frac{1}{r_{n}}+k_{n-1} \;\;\;\mathrm{and}\;\;\; s_{n-1}=\frac{1}{s_{n}}-k_{n-1}.
\end{equation}
Define $a_n,b_n > 0 $ for each $n \in \Z$ by
 \begin{equation}\label{r,s}
\frac{a_{n}}{\sqrt{d}}=\frac{1}{r_{n-1}+s_{n-1}}\;\;\;\;\;\mathrm{and}\;\;\; \;\;\;\frac{b_{n}}{\sqrt{d}}=\frac{r_n-s_n}{r_n+s_n}.
\end{equation}
It can be seen that  using \eqref{recu} that $d=\mathrm{disc}(Q)=b_n^2+4a_{n}a_{n+1}$ for all $n \in \Z.$
Let
\begin{equation}\label{chain}
Q_n=\big(a_{n+1},-b_n,-a_{n}\big).
\end{equation}
A calculation shows that
\[\a_{Q_n}=r_n\;\;\;\mathrm{and}\;\;\; -\b_{Q_n}=s_n.\]
It follows that each $Q_n$ is reduced and equivalent to $Q$.
We claim that every such form occurs as a $Q_n$. Further, each $Q_{2n+1}$ is properly equivalent 
to $Q$ and every reduced  form that is properly equivalent to $Q$  
is one of the $Q_{2n+1}$. Again, these statements follow from variations on the arguments given in  Chaper VII of \cite{Di}.
That the claimed bijections are well-defined and injective follows.
 Clearly every sequence $K$ arises from some reduced form so the maps are also surjective.

Turning to the last statement, recall that $Q^*$ was defined in \eqref{qstar} and
 observe that
 \[\b_{Q^*}=-\tfrac{1}{\a_Q}\qquad\text{and}\qquad \a_{Q^*}=-\tfrac{1}{\b_Q}.\] Let $r_n^*$ and $s_n^*$ correspond to $Q^*$ as in \eqref{rs}.  By \eqref{a,b} we have 
\[
r^*_1=-\tfrac{1}{\b_Q}=[k_0,k_{-1},k_{-2},k_{-3},\dots]\;\;\;\mathrm{and}\;\;\;s^*_1=\tfrac{1}{\a_Q}=[0,k_1,k_2,k_3\dots],
\]
giving the result.

This completes the proof of Proposition~\ref{prop:map}.
\end{proof}

Say that a sequence $K=(k_n)$ is {\it periodic}  if there is an $N\in \Z^+$ so that $k_{n+N}=k_n$ for all $n\in \Z$
and  {\it palindromic } if $K^*$ is equivalent to $K$.
Combining Propositions \ref{firc} and \ref{prop:map} we derive the following correspondence.
\begin{theorem}\label{map2}
There is  a bijection between proper modular billiards and equivalence classes of doubly infinite sequences of positive integers. Under this correspondence a billiard is periodic precisely when 
the sequence is periodic and non-orientable precisely when the corresponding sequence is palindromic.
\end{theorem}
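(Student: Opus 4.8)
The plan is to assemble Theorem~\ref{map2} from the two structural bijections already in hand, namely Proposition~\ref{firc0}(ii) identifying classes $[Q]$ of fixed positive discriminant (not representing zero) with proper billiards, and Proposition~\ref{prop:map} identifying such classes (modulo equivalence) with equivalence classes of doubly-infinite sequences of positive integers. Composing these two maps gives the claimed bijection at once; the work is entirely in translating the three adjectives (periodic, non-orientable, palindromic) through the dictionary.

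First I would address the orientability statement. By Proposition~\ref{firc}(b), $\mathcal B$ is non-orientable exactly when the associated $Q$ is reciprocal, i.e.\ $Q$ is equivalent to $-Q$. Now $Q$ and $-Q$ become, under the map of Proposition~\ref{prop:map}, the equivalence classes $K_Q$ and $K_{-Q}$; since $Q^*$ is (properly) equivalent to $-Q$ and $K^*_Q = K_{Q^*}$ by the final assertion of Proposition~\ref{prop:map}, we get $K_{-Q} = K_Q^*$ as equivalence classes. Hence $Q \sim -Q$ iff $K_Q \sim K_Q^*$, which is precisely the definition of $K_Q$ being palindromic. This gives the second equivalence.

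Next I would handle periodicity. By Proposition~\ref{firc}(c) (together with the remark after it, that periodic billiards are exactly those whose class is a scalar multiple of a primitive integral form not representing zero), $\mathcal B$ is periodic iff the class $[Q]$ contains an integral form, equivalently iff the reduced forms $Q_n$ in the chain \eqref{chain} repeat. But from the recursion \eqref{recu} and the formulas \eqref{r,s}, the form $Q_n$ is determined by the pair $(r_n, s_n)$, which is in turn determined by the bi-infinite tail $(\dots, k_{n-1}, k_n, k_{n+1}, \dots)$; so the chain $(Q_n)$ is eventually periodic iff the sequence $K_Q = (k_n)$ is periodic, and for a bi-infinite chain of reduced forms of fixed discriminant ``eventually periodic'' forces genuinely periodic since there are only finitely many reduced forms of a given discriminant. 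I would phrase this as: the chain is periodic $\Leftrightarrow$ $[Q]$ contains integral forms $\Leftrightarrow$ $K_Q$ is periodic. Care is needed here about even versus odd shifts (proper vs.\ improper equivalence), but the period of the sequence and the period of the chain differ only by a bounded factor, so this does not affect the ``periodic or not'' dichotomy.

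The main obstacle, I expect, is the periodicity direction: making rigorous the passage from ``the chain $(Q_n)$ is eventually/genuinely periodic'' to ``$[Q]$ contains a primitive integral form (up to scalar)'' without simply re-deriving the classical theory of continued fractions and Pell's equation. The cleanest route is to invoke Proposition~\ref{firc}(c) and the surrounding discussion directly, noting that a periodic sequence produces, via \eqref{r,s} and \eqref{chain}, forms $Q_n$ with bounded (hence finitely many) values, forcing repetition, and conversely that a repeating reduced form in the class means $[Q]$ is stabilized by a hyperbolic element of $\G$, which (again by the cited classical results in \cite{Di}, Chapter VII) is equivalent to integrality of the class up to scaling; I would keep this paragraph short and lean on the already-quoted references rather than prove it from scratch.
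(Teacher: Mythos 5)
Your proposal is correct and follows essentially the same route as the paper, which derives Theorem~\ref{map2} in one line by combining Propositions~\ref{firc} and~\ref{prop:map}; your write-up simply makes that combination explicit (non-orientable $\Leftrightarrow$ $Q\sim -Q$ $\Leftrightarrow$ $K_Q\sim K_Q^*$ via $K_{Q^*}=K_Q^*$, and periodic $\Leftrightarrow$ integral class $\Leftrightarrow$ periodic chain of reduced forms $\Leftrightarrow$ periodic sequence). The only nitpick is that ``finitely many reduced forms of a given discriminant'' holds for integral forms (the case you actually need), not for arbitrary real forms; the cleaner justification that eventual periodicity of the chain upgrades to genuine periodicity is that each $Q_n$ determines the entire bi-infinite chain in both directions.
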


From the first formula of \eqref{r,s} and \eqref{a} we have
\begin{equation}\label{b}
\lambda_\infty(\B)=\sup_{n\in \Z}(r_n+s_n).
\end{equation}
This is the starting point of the proof of Markov's Theorem \ref{mark}.
The main difficulty is in understanding which $K$ cannot have any small values of $r_n+s_n$.
The first observation is that if any $k_m>2$ we must have  $\lambda_\infty(\B)>3$.
The complete result requires an ingenious  analysis of continued fractions
all of whose partial quotients are either 1 or 2. A  treatment of Markov's method and a  proof of Theorem \ref{mark} based on it can be found in Dickson's book~\cite{Di2}. Other useful references are \cite{Ai}, \cite{Ber}, \cite{Bo} and \cite{Ca}.

\section{Sequences and the spectrum}\label{seqspec}

To return to the proof of Theorem \ref{main}, 
recall from \eqref{genmar} that  $\mathcal{M}_i$  is defined in terms of $\lambda_i(\B)$, which was given in \eqref{mar2}.
We now  find a formula for $\lambda_i(\B)$ that is analogous to \eqref{b} when $\B$ is a proper modular billiard.
For our problem we are led to  estimate certain quantities  involving  pairs of  successive 
values of $r_n$ and $s_n$ from \eqref{rs}, rather than simply $r_n+s_n.$

 Let $K$ be a doubly-infinite sequence of positive integers.
The quantities we need are the following:
\begin{align}
\mu'_n(K)&=\left| \frac{1}{r_n+s_n}-\frac{1}{r_{n-1}+s_{n-1}}\right|\label{mup}\\
\mu''_n(K)&=\left| \frac{2-r_n+s_n}{r_{n}+s_{n}}-\frac{1}{r_{n-1}+s_{n-1}}\right|\label{mupp}\\
\mu'''_n(K)&=\left| \frac{2}{r_{n-1}+s_{n-1}}+\frac{r_n-s_n-1}{r_{n}+s_{n}}\right|.\label{muppp}
\end{align}
Also set

\begin{equation}\label{mu}
\mu'(K)= \inf_{n \in \Z}\mu'_n(K),\;\;
\mu''(K)=\inf_{n \in \Z}\mu''_n(K),\;\;
\mu'''(K)=\inf_{n \in \Z}\mu'''_n(K)
\end{equation}
and
 \begin{equation}\label{max}
 \mu(K)= \min \big(\mu'(K),\mu''(K),\mu'''(K),\mu'(K^*),\mu''(K^*),\mu'''(K^*)\big).
 \end{equation}
 
\begin{proposition}\label{finl}
Let  $K$ correspond to a proper $\B$. Then we have
\[\lambda_i(\B)^{-1}= \mu(K).\]
\end{proposition}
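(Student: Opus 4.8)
The plan is to combine Proposition~\ref{reduc} with the explicit parametrization of reduced forms in a class provided by Proposition~\ref{prop:map}. By Proposition~\ref{reduc} we know $\lambda_i(\B)^{-1} = \inf_{Q \in [Q] \text{ reduced}} \nu(Q)$, and by the proof of Proposition~\ref{prop:map} the reduced forms in $[Q]$ are precisely the forms $Q_n = (a_{n+1}, -b_n, -a_n)$ from \eqref{chain}, as $n$ ranges over $\Z$. So the first step is simply to substitute $Q_n$ into the definition \eqref{nu} of $\nu$ and see that the three quantities $|a+c|$, $|2a+b+c|$, $|2c+b+a|$ — evaluated at $(a,b,c) = (a_{n+1}, -b_n, -a_n)$ — become, after dividing by $\sqrt d$, exactly $\mu'_{n+1}(K)$, $\mu''_{n+1}(K)$ (or its $K^*$-analogue) and $\mu'''_{n+1}(K)$ (or its $K^*$-analogue). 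This is a matter of algebraic bookkeeping: using \eqref{r,s} we have $a_n/\sqrt d = 1/(r_{n-1}+s_{n-1})$ and $b_n/\sqrt d = (r_n - s_n)/(r_n + s_n)$, so for instance $(a_{n+1} + (-a_n))/\sqrt d = 1/(r_n+s_n) - 1/(r_{n-1}+s_{n-1})$, whose absolute value is $\mu'_n(K)$ after reindexing.

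The second step is to sort out which of the three terms in \eqref{nu}, applied to $Q_n$, gives which of the six quantities in the definition \eqref{max} of $\mu(K)$. The term $|a+c|$ yields $\mu'$, which is manifestly reversal-symmetric (replacing $K$ by $K^*$ just reverses the roles of $r$ and $s$ and flips the sign of $b_n$, leaving $a+c$ and hence $\mu'$ unchanged), so it contributes both $\mu'(K)$ and $\mu'(K^*)$ — equivalently, just $\mu'(K) = \mu'(K^*)$. The asymmetric terms $|2a+b+c|$ and $|2c+b+a|$ are swapped by the substitution $Q \leftrightarrow Q^*$; by Proposition~\ref{prop:map} we have $K_{Q^*} = K_Q^*$, and $[Q^*] = [-Q] = [Q]$ as unoriented classes only if $\B$ is non-orientable — but here the point is different. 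One does not need $[Q] = [Q^*]$: rather, since $Q^*$ is reduced whenever $Q$ is (stated just after \eqref{qstar}), running over all reduced forms in $[Q]$ and applying the middle term is the same as running over all reduced forms and applying the last term to their $*$-transforms, which by $K_{Q^*} = K_Q^*$ produces $\mu''(K^*)$ from $\mu'''$-type expressions and vice versa. Careful tracking of this correspondence shows that the infimum of $\nu(Q_n)$ over all $n$ picks up exactly the six quantities $\mu'(K), \mu''(K), \mu'''(K), \mu'(K^*), \mu''(K^*), \mu'''(K^*)$, whose minimum is $\mu(K)$.

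The remaining point needing care is the interface with Proposition~\ref{reduc}: that proposition was proved by showing that for any geodesic meeting $\T$, one can find a form $Q$ with $Q$ or $-Q$ reduced such that one of $Q(x,y)$, $Q(x+y,y)$, $-Q(x,x-y)$ gives an approach to $i$ at least as good, and conversely the three terms of $\nu$ correspond to these three transformations. I need to check that when I restrict the infimum in Proposition~\ref{reduc} to the specific reduced representatives $Q_n$ — rather than all reduced forms — nothing is lost; but this is immediate from the claim, proved inside Proposition~\ref{prop:map}, that every reduced form in $[Q]$ is some $Q_n$. Hence $\inf_{Q \in [Q]\text{ reduced}} \nu(Q) = \inf_{n \in \Z} \nu(Q_n)$, and combining with the identifications above gives $\lambda_i(\B)^{-1} = \mu(K)$.

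I expect the main obstacle to be the second step: getting the reversal/$*$-symmetry bookkeeping exactly right, so that the three terms of $\nu$ applied along the chain $(Q_n)$ genuinely reproduce all six of $\mu', \mu'', \mu'''$ for both $K$ and $K^*$ and no spurious or missing terms appear. The underlying algebra in the first step is routine once \eqref{r,s} and \eqref{recu} are in hand; it is the correct pairing of the asymmetric terms $|2a+b+c|$ and $|2c+b+a|$ with $\mu''$ and $\mu'''$ under $K \mapsto K^*$ — and the realization that $\mu'$ is already symmetric so contributes only once — that requires the most attention.
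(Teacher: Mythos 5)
Your overall architecture is the paper's: Proposition~\ref{reduc} reduces the problem to reduced forms, Proposition~\ref{prop:map} identifies these with the chain $Q_n=(a_{n+1},-b_n,-a_n)$ of \eqref{chain}, and substituting into \eqref{nu} via \eqref{r,s} gives $\nu(Q_n)=\min\bigl(\mu'_n(K),\mu''_n(K),\mu'''_n(K)\bigr)$ --- that much is exactly the published argument. The gap is in your second step, the reversal bookkeeping, and it is a genuine error rather than a presentational one. With $Q=(a,b,c)$ one has $Q^*=(-c,b,-a)$, so the two asymmetric terms of $\nu$ evaluated at $Q^*$ are $|a-b+2c|$ and $|2a-b+c|$: the roles of $a$ and $c$ are exchanged \emph{and the sign of $b$ flips}. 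They are therefore not a permutation of the terms $|2a+b+c|$ and $|2c+b+a|$ of $\nu(Q)$, and your claim that running the middle term over reduced forms equals running the last term over their $*$-transforms is false. Concretely, using $a^*_m=a_{3-m}$ and $b^*_m=b_{2-m}$ one finds $\mu''_m(K^*)=d^{-1/2}|2a_{n}-b_n-a_{n+1}|$ and $\mu'''_m(K^*)=d^{-1/2}|2a_{n+1}+b_n-a_n|$ with $n=2-m$, and neither occurs among the three terms of any $\nu(Q_{n'})$. So $\inf_n\nu(Q_n)$ does \emph{not} ``pick up exactly the six quantities'' of \eqref{max}; it yields only $\mu'(K),\mu''(K),\mu'''(K)$ (plus $\mu'(K^*)$, which, as you correctly note, coincides with $\mu'(K)$). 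What you have actually proved is $\lambda_i(\B)^{-1}=\min\bigl(\mu'(K),\mu''(K),\mu'''(K)\bigr)\geq\mu(K)$; the reverse inequality $\mu(K)\geq\lambda_i(\B)^{-1}$ is missing.

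The fix is short but must be said. Either observe that $\B$ and its reversal $\B^*$ consist of the same geodesic segments as point sets, so $\lambda_i(\B)=\lambda_i(\B^*)$, and apply your three-term identity to the class $[-Q]$ corresponding to $\B^*$ and the sequence $K^*$, which gives $\min\bigl(\mu'(K^*),\mu''(K^*),\mu'''(K^*)\bigr)=\lambda_i(\B^*)^{-1}=\lambda_i(\B)^{-1}$ as well; or note that $|2a-b+c|$ and $|a-b+2c|$ are the $|a'+c'|$-invariants of $Q_n(x-y,y)$ and of a determinant $-1$ transform of $Q_n$ (equivalently, values of $\tfrac12|Q''|$ at the unimodular points $(1,1,0,1)$ and $(1,0,1,1)$ in \eqref{quat}), so Proposition~\ref{quad} bounds them below by $\sqrt d\,\lambda_i(\B)^{-1}$. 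Either route closes the gap; without one of them the equality with the six-term minimum \eqref{max} is unproved. (The paper's own proof is silent on this point as well, but it does not assert the false swap; your version commits to a mechanism that does not hold.)
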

\begin{proof}
By  Proposition~\ref{prop:map} and \eqref{r,s} 
\[\mu'_n(K)=d^{-\frac{1}{2}}|a_{n+1}-a_{n}|,\;\;\mu''_n(K)=d^{-\frac{1}{2}}|2a_{n+1}-b_n-a_{n}|,\;\;\mu_n'''(K)=d^{-\frac{1}{2}}|2a_{n}+b_n-a_{n+1}|.\]
The result follows from Proposition~\ref{reduc} since every reduced form of the class $[Q]$ is found among the $Q_n.$
\end{proof}

In the following proposition we  show that  
\[
\{\half\sqrt{21},\;\;\tfrac{2}{3}\sqrt{14},\;\;\tfrac{1}{3} (3+\sqrt{21})\}\in\mathcal{M}_i,
\] 
and that each value is attained.
For $j=1,2,3$ and some fixed choice of the first  entry in each,  define doubly-infinite sequences
\begin{equation}\label{K}
K_1=(\overline{1,3}),\quad K_2=(\overline{1,2,1,6}),\quad K_3=(\overline{3,1},4,\overline{1,3}).
\end{equation}
Here an overlined subsequence adjacent to a parenthesis indicates that one must concatenate the subsequence infinitely many times in the direction of the parenthesis.

\begin{proposition}\label{bj}
Let $\sC_j$ be the modular billiard associated to $K_j$ for $j=1,2,3$. Then
\begin{equation}\label{cs}
	\lambda_i(\sC_1) = \tfrac 12 \sqrt{21}, \qquad \lambda_i(\sC_2) = \tfrac 23 \sqrt{14}, \qquad \lambda_i(\sC_3) = \tfrac 13 (3+\sqrt{21}).
\end{equation}
Each value $\lambda_i(\sC_j)$ is attained. All three billiards are proper and non-orientable; $\sC_1, \sC_2$ are periodic, while $\sC_3$ is not periodic.
\end{proposition}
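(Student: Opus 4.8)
The plan is to apply Proposition~\ref{finl}, which reduces each equality in \eqref{cs} to a computation of $\mu(K_j)$. First I would carry out the purely combinatorial side: for each $j$ identify the (finitely many, up to shift) continued-fraction tails $r_n = [k_n, k_{n+1}, \dots]$ and $s_n = [0, k_{n-1}, k_{n-2}, \dots]$ that occur, using periodicity of $K_1$ and $K_2$ and eventual periodicity of $K_3$ in both directions. For $K_1 = (\overline{1,3})$ there are only two values of $r_n$ and two of $s_n$ (since the sequence has period $2$), each a quadratic surd in $\sqrt{21}$ obtained by solving $x = 1 + 1/(3 + 1/x)$ and its companion; similarly $K_2$ has period $4$, giving four pairs, with surds in $\sqrt{14}$; and $K_3$ has three ``regimes'' — the purely left-periodic tail $(\overline{3,1})$, the purely right-periodic tail $(\overline{1,3})$, and a bounded transitional region around the isolated $4$ — so there are finitely many distinct pairs $(r_n, s_n)$ to list, with surds in $\sqrt{21}$. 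Note $K_3$ is chosen so that its reversal $K_3^*$ is equivalent to $K_3$ (it is palindromic), so $\mu(K_3)$ and $\mu(K_3^*)$ coincide; likewise $K_1, K_2$ are palindromic, consistent with the asserted non-orientability.

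Next I would evaluate the three functionals $\mu'_n, \mu''_n, \mu'''_n$ from \eqref{mup}--\eqref{muppp} on each of these finitely many pairs, take the infimum over $n$ as in \eqref{mu}, and then the overall minimum \eqref{max}. Because each $K_j$ is (eventually) periodic, these infima are attained at one of finitely many $n$, so ``$\inf$'' becomes ``$\min$'' and the value is an explicit algebraic number; the arithmetic is routine surd manipulation and I would simply record that $\mu(K_1) = 2/\sqrt{21}$, $\mu(K_2) = 3/(2\sqrt{14})$, $\mu(K_3) = 3/(3+\sqrt{21})$, matching the reciprocals of the claimed $\lambda_i$ values. The claim that each value is \emph{attained} (rather than merely approached) follows from exactly this finiteness: the minimizing index $n$ exists, so by Proposition~\ref{reduc} some specific reduced form $Q_n$ in the class realizes the distance, i.e.\ the infimum defining $\delta_i(\sC_j)$ is a minimum.

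For the qualitative assertions: properness follows from Proposition~\ref{firc}(a) since none of the $K_j$ corresponds to a form representing zero (equivalently, the associated geodesics contain no vertical segment — their endpoints are irrational quadratic surds, so the sequences are genuinely doubly infinite rather than terminating). Non-orientability follows from Theorem~\ref{map2}: each $K_j$ is palindromic, $K_1$ and $K_2$ obviously so by their periodic block structure and $K_3$ because $(\overline{3,1},4,\overline{1,3})$ reversed is $(\overline{3,1},4,\overline{1,3})$ again. Periodicity of $\sC_1, \sC_2$ and non-periodicity of $\sC_3$ is again Theorem~\ref{map2}: $K_1, K_2$ are periodic sequences while $K_3$ is not periodic (the isolated $4$ never recurs).

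The main obstacle I anticipate is purely organizational rather than conceptual: for $K_3$ one must be careful to enumerate \emph{all} relevant indices $n$ near the transitional block — not just in the two periodic tails — and check that none of them produces a value of $\mu'_n, \mu''_n$, or $\mu'''_n$ smaller than $3/(3+\sqrt{21})$, since a single overlooked small term would break the equality. In other words, the real content is verifying a finite list of inequalities of the form $\mu_n^{\bullet}(K_3) \geq 3/(3+\sqrt{21})$ with equality for at least one $(n,\bullet)$, and making sure the list is exhaustive; the individual inequalities are elementary once the surds are in hand.
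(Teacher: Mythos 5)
Your overall route is exactly the paper's: the paper's own proof of this proposition is a two-sentence appeal to Proposition~\ref{finl} for the values and attainment, and to Theorem~\ref{map2} for properness, non-orientability, and (non-)periodicity, so your expansion of the computation and your handling of the qualitative statements match it in substance. The one point where your argument as written does not go through is the finiteness claim for $K_3$. For the periodic sequences $K_1,K_2$ it is true that only finitely many pairs $(r_n,s_n)$ occur, so the infima in \eqref{mu} are minima over a finite set and attainment is immediate. But for $K_3=(\overline{3,1},4,\overline{1,3})$ eventual periodicity does \emph{not} reduce the problem to finitely many pairs: for $n$ deep in the right-hand tail, $r_n$ is one of the two purely periodic values, but $s_n=[0,k_{n-1},k_{n-2},\dots]$ still records the isolated $4$ at a finite depth, so the $s_n$ are pairwise distinct and only \emph{converge} to the periodic limit $[0,\overline{3,1}]$ or $[0,\overline{1,3}]$ as $n\to\infty$ (and symmetrically in the left tail). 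Hence the infimum defining $\mu(K_3)$ is over an infinite set of distinct values, and ``the minimizing index exists by finiteness'' is not a valid justification of attainment for $\sC_3$.

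The gap is repairable with the tools the paper itself uses: one must show that for all $n$ away from the central $4$ each of $\mu_n'(K_3),\mu_n''(K_3),\mu_n'''(K_3)$ stays strictly above $\tfrac14(\sqrt{21}-3)=3/(3+\sqrt{21})$, which follows from two-sided continued-fraction bounds on $r_n,s_n$ in the tails together with the monotonicity lemma closing \S\ref{ex} (this is precisely the style of estimate carried out in \S\ref{final} for the approximating sequences $K_\ell$), while the value $\tfrac14(\sqrt{21}-3)$ is attained by $\mu_1'''$ at the index adjacent to the $4$, where $r_1=[\overline{1,3}]$ and $s_1=[0,4,\overline{1,3}]$ are exact quadratic surds. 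Your final paragraph shows you anticipated that the tails need checking, but you framed that check as a finite verification, which it is not; with the limiting/monotonicity argument supplied, the rest of your proposal is correct.
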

\begin{proof}
That \eqref{cs} holds and that each $\lambda_i(\sC_j)$ is attained  is a straightforward application of Proposition~\ref{finl}.
Clearly $K_1,K_2$ are periodic and palindromic and $K_3$ is not periodic but is palindromic. Thus the final statement follows from Theorem \ref{map2}.
\end{proof}

 \section{Exceptional sequences }\label{ex}

In this and the next section we complete the
proof of  Theorem \ref{main}.
 By Proposition~\ref{imp} we may  assume that the billiard $\B$ is proper.
The main result of this section is the following proposition, which (together with Proposition~\ref{finl}) shows that the only proper billiards that stay farther away from $z=i$  than $\mathcal{C}_3$  are $\mathcal{C}_2$ and $\mathcal{C}_1$.
The method is completely elementary and amounts to finding inequalities  determined by continued fractions.
 
\begin{proposition}\label{mp}
Unless $K$ is equivalent to $K_j$ for $j=1,2,3$
 we have that \[\mu(K) < \tfrac{1}{4} \left(\sqrt{21}-3\right)=0.395644\dots.\]
\end{proposition}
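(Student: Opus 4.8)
The plan is to show that if $\mu(K)\ge \tfrac14(\sqrt{21}-3)$ then $K$ must be equivalent to one of $K_1,K_2,K_3$, by a forced-choice analysis of the partial quotients. First I would record explicit lower bounds for the three families $\mu'_n,\mu''_n,\mu'''_n$ in terms of the local data $k_{n-1},k_n,k_{n+1},\dots$, using the recursions \eqref{recu} and the fact that $r_n\in(k_n,k_n+1)$ and $s_n\in(0,1)$ (so $r_n+s_n$ is pinned to within $1$ once $k_n$ is known, and to within smaller and smaller windows once $k_{n+1},k_{n-1},\dots$ are specified). Since $\mu(K)$ also involves $K^*$, the resulting constraints are symmetric under reversal $n\mapsto 1-n$. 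The threshold $\tfrac14(\sqrt{21}-3)=0.3956\dots$ should be compared against the values attained by $K_1,K_2,K_3$, namely $2/\sqrt{21}=0.4364\dots$, $3/(2\sqrt{14})=0.4009\dots$, and $3/(3+\sqrt{21})=0.3956\dots$; so $\mathcal C_3$ sits exactly at the threshold and the claim is that everything else (other than $\mathcal C_1,\mathcal C_2$) drops strictly below it.

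The core of the argument is a case analysis on the partial quotients. First I would show that if any $k_m\ge 4$, or more than a bounded number of them equal $3$, or any pattern like $2,2$ or $1,2,2$ etc.\ occurs, then one of $\mu'_n,\mu''_n,\mu'''_n$ (or the reversed analogue) at a nearby index already falls below $\tfrac14(\sqrt{21}-3)$; the point is that large or ``unbalanced'' partial quotients make $r_n+s_n$ and $r_{n-1}+s_{n-1}$ far apart or make the numerator in $\mu''$ or $\mu'''$ small. This narrows the admissible alphabet and admissible blocks drastically — essentially $K$ must be built from the blocks $\{1,3\}$ and $\{1,2,1,6\}$ up to a controlled number of exceptional positions. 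Then I would show that once the sequence is forced into such a restricted shape, the only doubly-infinite sequences that survive the $\mu$-bound at \emph{every} index are the periodic ones $(\overline{1,3})$ and $(\overline{1,2,1,6})$ and the single transitional sequence $(\overline{3,1},4,\overline{1,3})$; any deviation — a defect in the periodicity, a wrong transition block, the wrong ``1 vs.\ 3'' at some site — produces an index $n$ where the relevant $\mu$-quantity is strictly below the threshold. Because $\mathcal C_3$ achieves the threshold with equality, extra care is needed at that boundary: I would check that near the central $4$ in $K_3$ all six quantities $\mu'_n,\mu''_n,\mu'''_n$ and their reversals are $\ge \tfrac14(\sqrt{21}-3)$, with equality at exactly one of them, and that \emph{any} modification of $K_3$ breaks this.

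Organizationally I would proceed index by index, propagating constraints outward from a site where $\mu$ is (hypothetically) large: fixing $k_n$ and $k_{n\pm1}$ forces $k_{n\pm2}$, and so on, by interval arithmetic on the continued-fraction tails. A convenient bookkeeping device is to phrase each $\mu$-inequality as ``$r_n+s_n$ and $r_{n-1}+s_{n-1}$ lie in such-and-such intervals whose images under the relevant affine combination avoid $(-\theta,\theta)$'' with $\theta=\tfrac14(\sqrt{21}-3)$, reducing everything to finitely many inequalities among rationals and surds once enough partial quotients are fixed. The main obstacle I anticipate is the combinatorial bookkeeping: the number of local patterns to rule out is moderately large, the estimates for $\mu''$ and $\mu'''$ (which mix $r_n-s_n$ with cross terms) are messier than the clean $\mu'$ estimate, and the equality case at $\mathcal C_3$ means several of the bounds must be proved sharp rather than with room to spare, so one cannot afford lossy estimates near the transitional block. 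I expect the periodic cases $K_1,K_2$ to be comparatively easy and the characterization of $K_3$ as the unique non-periodic survivor to be where the delicate work lies.
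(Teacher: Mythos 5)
Your overall plan---eliminate local patterns by interval arithmetic on continued-fraction tails until only a restricted alphabet survives, then identify the survivors---matches the first phase of the paper's argument (its reduction to sequences of the shape $(\dots,1,m_1,1,m_2,1,\dots)$ via four lemmas of exactly the kind you describe). But two things need repair. First, several of the eliminations you propose are false as stated: you cannot rule out ``any $k_m\ge 4$'' or ``more than a bounded number of entries equal to $3$,'' since $K_2$ contains a $6$, $K_3$ contains a $4$, and both $K_1$ and $K_3$ contain infinitely many $3$'s. The correct reduction is that an exceptional $K$ must alternate $1$'s with entries $m_j\ge 2$, that any occurrence of $m_j=2$ forces $K\sim K_2$, and that otherwise $m_j\ge 3$ for all $j$ with some $m_j\ge 4$ (unless $K\sim K_1$).

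Second, and more seriously, your endgame cannot be carried out by the method you describe. After the reduction one faces the uncountable family of sequences with every $m_j\ge3$ and some $m_j\ge4$, and you propose to show that ``any deviation produces an index $n$ where the relevant $\mu$-quantity is strictly below the threshold,'' reducing to finitely many inequalities once enough partial quotients are fixed. That fails: a sequence with two $4$'s separated by $N$ copies of the block $(3,1)$ agrees with $K_3$ on a window of length about $N$ around every index, so every local quantity $\mu'_n,\mu''_n,\mu'''_n$ differs from its $K_3$-value by an amount exponentially small in $N$, and hence exceeds $\tfrac14(\sqrt{21}-3)$ minus an arbitrarily small error. No analysis that inspects only finitely many partial quotients can detect the second $4$. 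The paper's resolution is a global monotonicity lemma: $\mu'''_n=F(r_n,s_n)$ with $F(x,y)=(2xy+x-y-1)/(x+y)$, and $\nabla F>0$ on the relevant region, so $\mu'''_n$ is strictly decreasing in \emph{every} partial quotient $m_j$ simultaneously (each tail $r_n,s_n$ being monotone in each $m_j$). Taking $n$ at a $4$ and comparing with $K_3$, which minimizes all the $m_j$ subject to $m_0= 4$ and $m_j\ge3$, gives $\mu'''_n(K)\le\mu'''_n(K_3)=\tfrac14(\sqrt{21}-3)$ with equality if and only if $K\sim K_3$---the bound and the uniqueness in one stroke. This is the idea missing from your proposal; without it your scheme can only prove the inequality up to an error that cannot be removed by sharpening finitely many estimates.
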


We will say that any $K$ with $\mu(K) \geq 0.395644\dots$ is {\it exceptional.}
The proof of Proposition~\ref{mp} consists of  a series of results that successively eliminate
 configurations of subsequences in a $K$ that force it to not  be exceptional.
 Since $\mu(K)=\mu(K^*)$,  it is clearly permissible to only prove it for either $K$ or $K^*$.  Hence we will often  only provide estimates for 
  one of them and might not mention when reversals 
 must also be considered in order to cover all cases.  
 
\begin{proposition}\label{K2}
An  exceptional $K$ must have the form
\[
K=(\dots, 1, m_1,1,m_2,1,\dots)
\]
where $m_j \geq 2$.
If any $m_j=2$ then 
$K$ is equivalent to $K_2=(\overline{1,2,1,6}).$
\end{proposition}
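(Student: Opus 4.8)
The plan is to work directly with the formulas \eqref{mup}--\eqref{muppp} and the recursions \eqref{recu}, and to exploit the basic principle already noted after \eqref{b}: a large partial quotient $k_m$ forces either $r_{m-1}$ or $s_m$ (hence the relevant denominators $r_n+s_n$) to lie in a narrow range, and this in turn pins down several of the $\mu$-quantities. The first step is to show that an exceptional $K$ cannot contain any partial quotient $k_m\geq 3$ \emph{except} in the very special pattern appearing in $K_2$. Concretely, suppose $k_m=c\geq 3$ for some $m$. Then $r_{m-1}=c+1/r_m$ with $r_m>1$, so $c<r_{m-1}<c+1$; a symmetric bound holds for $s$ if instead a large entry sits to the left. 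Substituting into \eqref{mup}--\eqref{muppp} at the indices $n=m-1$ and $n=m$ (and, as needed, at their reversals), one obtains that at least one of $\mu'_n(K),\mu''_n(K),\mu'''_n(K)$ is forced below $\tfrac14(\sqrt{21}-3)$ unless the neighbours of the large entry are tightly constrained. Carrying this out, the only surviving configuration with an entry $\geq 3$ turns out to be the one in which $2$'s are adjacent to large entries in the periodic pattern $\overline{1,2,1,6}$; all other placements of an entry $\geq 3$ (in particular, two large entries close together, or a large entry flanked by $1$'s on a block that is not of the $K_2$ shape) are eliminated by an explicit inequality.

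Granting that, the second step is the easy structural one: once every entry of $K$ is either $1$ or $2$, I claim no two consecutive entries can both equal $1$. Indeed if $k_{m}=k_{m+1}=1$ then $r_m=[1,1,k_{m+2},\dots]$ and $s_{m+1}=[0,1,k_{m-1},\dots]$ are both forced into intervals near specific values (the convergents of $[\overline{1}]=\tfrac{1+\sqrt5}{2}$ and its reciprocal), and plugging these into the expressions for $\mu'_n$ or $\mu''_n$ at the appropriate index drops the minimum below the threshold. (This is essentially the same mechanism that makes $K_1=(\overline{1,3})$ the closest-approaching billiard: the pattern that avoids two consecutive $1$'s most economically, using only $1$'s and larger entries, is $(\overline{1,3})$, while using a $2$ somewhere forces the $6$'s of $K_2$.) Therefore between any two $1$'s there is exactly one entry $m_j\geq 2$, i.e. $K$ has the interleaved shape $(\dots,1,m_1,1,m_2,1,\dots)$ asserted in the statement. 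Finally, if some $m_j=2$, then we have a block $\dots,1,2,1,\dots$; feeding this into the inequalities from step one (which already identified $\overline{1,2,1,6}$ as the only admissible home for a $2$) forces the neighbouring $m_{j\pm1}$ to be large in exactly the periodic way that reconstructs $K_2$, and then repeating the argument propagates the pattern in both directions, so $K$ is equivalent to $K_2$.

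The main obstacle will be step one: the bookkeeping needed to show that an entry $\geq 3$ is fatal unless it is the $6$ of the $K_2$-pattern. The difficulty is that the bound $c<r_{m-1}<c+1$ is loose when $c$ is only $3$ or $4$, so one must also use the finer information $r_{m-1}=c+1/r_m$ together with the constraint on $k_{m-1},k_{m+1}$, iterating the recursion one or two more steps and comparing against $\tfrac14(\sqrt{21}-3)$ with enough precision. I would organize this as a short case analysis on $(k_{m-1},k_m,k_{m+1})$ with $k_m\geq 3$, in each case choosing the index $n$ and the superscript $(',''\!,''')$ that gives the sharpest bound and checking the numerical inequality; the surviving case, $k_m=6$ with $k_{m-1}=k_{m+1}=1$ and the next entries again $2$, is precisely the $K_2$ signature. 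The reversal symmetry $\mu(K)=\mu(K^*)$ halves the casework, as the paper already notes.
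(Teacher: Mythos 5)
Your step one is built on a claim that is false, and this breaks the whole architecture. You assert that an exceptional $K$ cannot contain any entry $k_m\geq 3$ except inside the pattern $\overline{1,2,1,6}$, and in particular that ``a large entry flanked by $1$'s on a block that is not of the $K_2$ shape'' is eliminated by an explicit inequality. But $K_1=(\overline{1,3})$ and $K_3=(\overline{3,1},4,\overline{1,3})$ are both exceptional --- one computes $\mu(K_1)=2/\sqrt{21}=0.4364\dots$ and $\mu(K_3)=\tfrac14(\sqrt{21}-3)$, both at or above the threshold --- and each contains entries $\geq 3$ flanked by $1$'s with no $\overline{1,2,1,6}$ block in sight. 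So the case analysis you propose cannot close; your own parenthetical remark that $(\overline{1,3})$ is the closest-approaching billiard already contradicts your step one. The proposition places \emph{no} upper constraint on the $m_j\geq 3$; ruling those out (except for $K_1$ and $K_3$) is the content of the later Proposition~\ref{mp}, proved by the monotonicity of $F(x,y)=\frac{2xy+x-y-1}{x+y}$, not of Proposition~\ref{K2}. The downstream steps inherit the damage: step two assumes all entries are $1$ or $2$, a situation that is never reached, and step three then reintroduces $6$'s, contradicting that assumption.

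The correct decomposition, which is what the paper carries out, is: (a) eliminate $(1,1)$ and every adjacent pair of entries both $\geq 2$ --- Lemma~\ref{l0} handles $(1,1)$, $(2,j)$ for $2\leq j\leq 6$, and all pairs $(m,m')$ with $m,m'\geq 3$ via a uniform bound $\mu'_n(K)\leq\tfrac13$, and Lemma~\ref{l1} handles a $2$ adjacent to anything other than a $1$ via $\mu''_n$; this alone yields the alternating form $(\dots,1,m_1,1,m_2,1,\dots)$ with the $m_j\geq 2$ otherwise unconstrained. Then (b) show that $(1,2,1,m)$ forces $m=6$ (Lemma~\ref{l2}, using $\mu'_n$ for small $m$ and $\mu'''_n$ on the reversal for large $m$) and that $(1,2,1,6,1,m,1)$ forces $m=2$ (Lemma~\ref{l4}), and propagate in both directions to conclude $K\sim(\overline{1,2,1,6})$. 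Your step three is the right idea for part (b), and your mechanism for excluding $(1,1)$ is essentially the paper's, but you need to replace step one entirely: the dichotomy is not ``entries $\geq 3$ versus entries $\leq 2$'' but ``adjacent entries both $\geq 2$ versus isolated entries $\geq 2$ separated by single $1$'s.''
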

Proposition~\ref{K2}  will be proven in the four lemmas that follow.
\begin{lemma}\label{l0}
Given any $K$, if $K$ contains any subsequence of the form 
\begin{equation}\label{list}
(1,1),(2,2),(2,3),(2,4),(2,5),(2,6),(m,m')
\end{equation}
 for $3\leq m\leq m' $  
then $\mu'(K) \leq \frac{1}{3}$ and therefore $K$ is not exceptional.
\end{lemma}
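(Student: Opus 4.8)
\textbf{Plan of proof for Lemma~\ref{l0}.}
The idea is that each subsequence in \eqref{list} forces two consecutive denominators $r_{n-1}+s_{n-1}$ and $r_n+s_n$ (from \eqref{rs}) to be sufficiently separated that $\mu'_n(K) = \left|\frac{1}{r_n+s_n} - \frac{1}{r_{n-1}+s_{n-1}}\right| \le \tfrac13$, whence $\mu(K) \le \mu'(K) \le \tfrac13 < 0.395644\dots$. First I would recall the elementary bounds on continued fractions: if $k_n = k$ then $r_n \in (k, k+1)$ and $s_n = [0,k_{n-1},\dots] \in (0,1)$, so $r_n + s_n \in (k, k+2)$, with sharper two-sided bounds available by peeling off one more partial quotient (e.g. $r_n \in [k + \tfrac{1}{k_{n+1}+1}, k + \tfrac{1}{k_{n+1}}]$). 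The recursions \eqref{recu}, namely $r_{n-1} = k_{n-1} + \tfrac{1}{r_n}$ and $s_{n-1} = \tfrac{1}{s_n} - k_{n-1}$, let me track how the pair $(r_n+s_n, r_{n-1}+s_{n-1})$ transforms as I move along the relevant index, so I can locate the index $n$ at which the jump in $\tfrac{1}{r_n+s_n}$ is largest.

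Next I would dispatch the cases one at a time, exploiting the monotonicity of $x \mapsto 1/x$. Suppose $K$ contains $(k_{n-1},k_n) = (m,m')$ with $3\le m \le m'$; then $r_{n-1} + s_{n-1} > m \ge 3$ gives $\tfrac{1}{r_{n-1}+s_{n-1}} < \tfrac13$, and since $\tfrac{1}{r_n+s_n} > 0$, the difference of the two reciprocals has absolute value strictly less than $\tfrac13$ in one direction; the other direction needs the crude upper bound on $r_{n-1}+s_{n-1}$, but in fact for this case it is cleanest to just observe $0 < \tfrac{1}{r_n+s_n} < \tfrac1{m'} \le \tfrac13$ as well, so both reciprocals lie in $(0,\tfrac13)$ and their difference is $< \tfrac13$. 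For the remaining finitely many patterns $(1,1), (2,2), (2,3), (2,4), (2,5), (2,6)$, the same strategy applies but requires the two-sided estimates: for $(k_{n-1},k_n)=(2,k_n)$ with $2 \le k_n \le 6$ one has $r_{n-1} \in (2,3)$, $s_{n-1}\in(0,1)$ while $r_n \in (k_n, k_n+1)$, $s_n \in (0,1)$, and a short interval computation shows $\left|\tfrac{1}{r_n+s_n} - \tfrac{1}{r_{n-1}+s_{n-1}}\right| < \tfrac13$ — the worst case being $(2,2)$, where the two denominators are closest; here one needs the sharper bound $r_{n-1}+s_{n-1} > 2 + \tfrac{1}{s_n} - \dots$ hmm, more carefully: when $(k_{n-1},k_n)=(2,2)$ we get $r_{n-1}+s_{n-1}$ and $r_n+s_n$ both in roughly $(2,4)$, so I would instead peel one more term and use that $s_{n-1} = \tfrac1{s_n}-2$ forces a usable relation between the two denominators, or simply note $\tfrac12 < \tfrac{1}{r_n+s_n}$ is false — rather both reciprocals lie in $(\tfrac14,\tfrac12)$, giving difference $<\tfrac14 < \tfrac13$. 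The case $(1,1)$ is the tightest: here $r_{n-1}+s_{n-1} \in (1,3)$ and $r_n+s_n\in(1,3)$, so I must use that a $1$ cannot be followed or preceded by arbitrary partial quotients without the neighbouring structure constraining things; but in fact since both reciprocals lie in $(\tfrac13, 1)$, their difference could a priori exceed $\tfrac13$, so this case genuinely needs the extra input that the sequence has positive integer entries on both sides — I would peel off $k_{n+1}$ and $k_{n-2}$ to pin the reciprocals into an interval of length $\le \tfrac13$.

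The one subtlety I anticipate — and the main obstacle — is the $(1,1)$ case, where the naive interval estimates for $r_n+s_n$ are not tight enough on their own; I expect to need a second-order expansion of the continued fractions (looking at $k_{n-2}$ and $k_{n+1}$), and possibly the observation that if $(k_{n-2},k_{n-1},k_n,k_{n+1})$ makes $\mu'_n$ large then some other $\mu'_m$ or $\mu''_m$ nearby must be small. A clean way to organize this is: parametrize $r_n = 1 + \tfrac{1}{t}$, $s_n = \tfrac{1}{u}$ with $t,u > 1$ (since $k_n=1$ and the tails are genuine continued fractions of positive integers), similarly $r_{n-1} = 1 + \tfrac{1}{r_n}$, $s_{n-1} = \tfrac{1}{s_n}-1 = u-1 \in (0,\infty)$ — wait, $s_{n-1}=\tfrac1{s_n}-k_{n-1}= u-1$, and for $s_{n-1}>0$ we need $u>1$, consistent. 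Then $\mu'_n = \left| \tfrac{1}{1+\tfrac1t + \tfrac1u} - \tfrac{1}{1 + \tfrac{1}{1+\tfrac1t+\tfrac1u} + u - 1}\right|$, and I would verify by calculus (or by checking that the extremal values occur at the boundary $t,u \in \{1,\infty\}$ after accounting for which boundary values are actually attainable given $k$'s are $\ge 1$) that this is always $\le \tfrac13$, with equality only in a degenerate limit. Once each of the seven patterns yields $\mu'(K)\le\tfrac13$, the conclusion $\mu(K)\le\mu'(K)\le\tfrac13<0.395644\dots$ is immediate from \eqref{mu} and \eqref{max}, so $K$ is not exceptional.
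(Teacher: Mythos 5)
Your overall strategy is the same as the paper's: locate the index $n$ where the listed pair occurs, trap $\tfrac{1}{r_{n-1}+s_{n-1}}$ and $\tfrac{1}{r_n+s_n}$ in explicit intervals via continued-fraction estimates, and conclude $\mu'_n(K)\le\tfrac13$. Your treatment of $(m,m')$ with $3\le m\le m'$ (both reciprocals lie in $(0,\tfrac13)$) is correct and in fact cleaner than the paper's uniform computation. But there is a genuine gap in the $(2,k)$ cases. The crude intervals you state, $r_{n-1}+s_{n-1}\in(2,4)$ and $r_n+s_n\in(k,k+2)$, only give $\mu'_n<\tfrac12-\tfrac1{k+2}$, which is $\tfrac{5}{14}$ for $k=5$ and $\tfrac38$ for $k=6$ --- both larger than $\tfrac13$. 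Moreover you have the difficulty backwards: since $\mu'_n$ is a \emph{difference} of reciprocals, it is largest when the two partial quotients are far apart, so the worst case among $(2,k)$ is $(2,6)$, not $(2,2)$; this is precisely why $(2,7)$ is excluded from the list (it genuinely fails, with bound $\tfrac{6}{17}>\tfrac13$). The repair is the one-level-deeper bound you mention in passing but never deploy: $s_n=[0,2,\dots]>[0,2,1]=\tfrac13$ and $r_{n-1}=[2,k,\dots]>[2,k,1]=2+\tfrac1{k+1}$, which give
\begin{equation*}
\frac{1}{r_{n-1}+s_{n-1}}<\frac{k+1}{2k+3},\qquad \frac{1}{r_n+s_n}>\frac{2}{2k+3},\qquad
\mu'_n(K)<\frac{k-1}{2k+3}\le\frac13 \quad (k\le 6).
\end{equation*}
This is exactly the uniform estimate the paper runs for every pair $(m,m')$ at once.

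The $(1,1)$ case is the opposite problem: you anticipate an obstacle that is not there, and consequently leave the case unfinished. The same single extra peel gives $s_n=[0,1,\dots]>[0,1,1]=\tfrac12$ and $r_{n-1}=[1,1,\dots]>[1,1,1]=\tfrac32$, so both $r_{n-1}+s_{n-1}$ and $r_n+s_n$ lie in $(\tfrac32,3)$, both reciprocals lie in $(\tfrac13,\tfrac23)$, and $\mu'_n(K)<\tfrac13$. No second-order expansion, no calculus on your two-parameter family, and no appeal to a nearby $\mu''_m$ is needed. Your proposed parametrization $r_n=1+\tfrac1t$, $s_n=\tfrac1u$ with $t>1$, $u\in(1,2)$ would of course also work, but as written the case is only a plan, not a proof. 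In short: the approach is right and matches the paper's, but the proposal as stated proves the lemma only for $(m,m')$ with $m\ge3$ and for $(2,2)$, $(2,3)$, $(2,4)$; the cases $(2,5)$, $(2,6)$ rest on a false estimate, and $(1,1)$ is left open.
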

\begin{proof}
If $K$ contains the subsequence $(m,m')$ 
for  any $m,m'\in \Z^+$ there is an $n \in \Z$ so that
\[
r_{n-1}=[m,m',\dots],\;r_n=[m',\dots],\;s_{n-1}=[0,\dots],\; s_{n}=[0,m,\dots].
\]
Thus $[m,m',1]<r_{n-1}<[m,m'],\; 0<s_{n-1}<1,\; [m']<r_{n}<[m',1],\;[0,m,1]<s_n<[0,m]$
and so it follows that
\[
\frac{m}{mm'+m+1}<\frac{1}{r_n+s_n}<\frac{m+1}{mm'+m'+1}
\]
and
\[
\frac{m'}{mm'+m'+1}<\frac{1}{r_{n-1}+s_{n-1}}<\frac{m'+1}{mm'+m+1}.
\]
The result now follows from \eqref{mup} and \eqref{mu}  since for the pairs in \eqref{list}
it can be easily verified that
\[
\left|\frac{m-m'-1}{mm'+m+1}\right|,\left|\frac{m'-m-1}{mm'+m'+1}\right|\leq\frac{1}{3}. \qedhere
\]
\end{proof}
A useful consequence  of Lemma \ref{l0}  is that  $(1,1)$ cannot occur as a subsequence  in an exceptional $K$. We will in several places use this fact without  further mention.

\begin{lemma}\label{l1}
If $K$ contains  the sequence $(\ell',2,\ell)$ where $\ell\neq 1$ or $\ell'\neq 1$ then $K$ is not exceptional.
\end{lemma}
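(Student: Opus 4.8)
The statement to prove is Lemma~\ref{l1}: if $K$ contains the subsequence $(\ell',2,\ell)$ with $(\ell,\ell')\neq(1,1)$, then $K$ is not exceptional. The plan is to reduce everything to estimates on the three quantities $\mu'_n$, $\mu''_n$, $\mu'''_n$ attached to the index $n$ sitting at the ``$2$'' in the pattern, and to exploit that $\ell,\ell'$ are positive integers not both equal to $1$. By the symmetry $\mu(K)=\mu(K^*)$ noted just before the lemma, I may assume $\ell'\geq 2$ (replacing $K$ by $K^*$ if necessary, which turns $(\ell',2,\ell)$ into $(\ell,2,\ell')$); so it suffices to treat the case $\ell'\geq 2$, $\ell\geq 1$. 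Moreover by Lemma~\ref{l0} I may assume $K$ contains no $(1,1)$, no $(2,2),\dots,(2,6)$ and no $(m,m')$ with $3\leq m\leq m'$, since otherwise $K$ is already non-exceptional; these exclusions pin down the neighbouring partial quotients fairly tightly.

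First I would fix the index $n$ so that, in the notation of \eqref{rs}, the ``$2$'' is $k_n$, i.e.
\[
r_{n-1}=[\ell',2,\ell,\dots],\qquad r_n=[2,\ell,\dots],\qquad s_{n-1}=[0,\dots],\qquad s_n=[0,\ell',\dots],
\]
exactly as in the proof of Lemma~\ref{l0}. Then I would write down two-sided bounds for $r_n$, $r_{n-1}$, $s_n$, $s_{n-1}$ in terms of $\ell,\ell'$ and the (now constrained) adjacent entries, using the elementary fact that truncating a continued fraction after an even or odd number of steps gives alternating over/under estimates; concretely $[2,\ell,1]<r_n<[2,\ell]$, $\tfrac{1}{\ell'+1}<s_n<\tfrac1{\ell'}$, and similar bounds for $r_{n-1}, s_{n-1}$. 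Substituting these into \eqref{mup}, \eqref{mupp}, \eqref{muppp} produces rational bounds for $\mu'_n$, $\mu''_n$, $\mu'''_n$ that are monotone in $\ell$ and in $\ell'$. I would then check that at least one of the three is $\leq 0.395644\dots$ for every admissible $(\ell,\ell')$; because the bounds are monotone, it is enough to verify this at the finitely many boundary cases — the smallest allowed values $\ell'=2$ and $\ell\in\{1,2,3,\dots\}$ up to the first value where some $\mu$-quantity is comfortably below the threshold, and then invoke monotonicity for all larger $\ell$ — together with the case $\ell=1,\ell'\geq2$ and the limiting values $\ell,\ell'\to\infty$.

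The organisation I anticipate is a short case split: (a) $\ell'=2$ (so the pattern is $(2,2,\ell)$), which is actually already handled by the $(2,2)$ entry in Lemma~\ref{l0}, so this case needs only a one-line remark; (b) $\ell'\geq 3$ with $\ell=1$; (c) $\ell'\geq3$ with $\ell\geq2$. In case (c) the entry $\ell'\geq3$ forces, via the no-$(m,m')$-with-$m\geq3$ exclusion of Lemma~\ref{l0}, the entry to the left of $\ell'$ to be $1$ or $2$, and similarly the exclusions constrain what can follow $\ell$; these constraints are what make the interval bounds tight enough. In each case I would present the single inequality (on $\mu'_n$, or on $\mu''_n$, or on $\mu'''_n$ — whichever wins) with its derivation from the displayed bounds, and conclude via \eqref{mu} and \eqref{max} that $\mu(K)\leq 0.395644\dots$.

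The main obstacle I expect is purely bookkeeping: choosing, for each admissible $(\ell,\ell')$, \emph{which} of $\mu'_n,\mu''_n,\mu'''_n$ to bound, and keeping the interval estimates for the four continued-fraction tails sharp enough that the resulting rational function of $\ell,\ell'$ genuinely stays below $\tfrac14(\sqrt{21}-3)$ rather than just below $\tfrac13$. There is also a subtlety that the bounds on $r_n,s_n$ depend on entries of $K$ two or three steps away from the $(\ell',2,\ell)$ block, so I must be careful to use only the worst-case (over all legal choices of those far entries) versions of the inequalities, and to note that the Lemma~\ref{l0} exclusions are exactly what bounds those far entries. None of this requires any idea beyond the continued-fraction estimates already used in Lemma~\ref{l0}; it is a finite verification once the case division is set up.
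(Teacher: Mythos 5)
Your plan follows the paper's proof in its essentials: use Lemma~\ref{l0} together with the reversal symmetry $\mu(K)=\mu(K^*)$ to reduce to the two surviving configurations ($\ell,\ell'\geq 7$, or one of $\ell,\ell'$ equal to $1$ and the other $\geq 7$), and then beat the threshold $\tfrac14(\sqrt{21}-3)$ by interval estimates on $\mu'_n,\mu''_n,\mu'''_n$ obtained from continued-fraction truncations, with Lemma~\ref{l0}'s exclusions controlling the nearby entries. The one genuine divergence is in the case where one of $\ell,\ell'$ equals $1$: the paper re-centers the index at the ``1'' adjacent to the ``2'', works with $\mu''_n$, and is then forced into a further sub-division according to the entries two and three places further out (the configurations $(m',1,m,1,2,\ell)$ and $(m,2,1,2,\ell)$), whereas you keep $n$ at the ``2'' throughout. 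Your choice actually works and is cleaner: for the pattern $(\dots,w,1,2,\ell,\dots)$ with $\ell\geq 7$ and $w=k_{n-2}\geq 2$ (no $(1,1)$), one gets $2<r_n<\tfrac{15}{7}$ and $\tfrac23<s_n<1$, hence $\tfrac{7}{22}<\tfrac{1}{r_n+s_n}<\tfrac38$, while $\tfrac{22}{15}<r_{n-1}+s_{n-1}<2$ gives $\tfrac12<\tfrac{1}{r_{n-1}+s_{n-1}}<\tfrac{15}{22}$, so $\mu'_n<\tfrac{4}{11}<\tfrac14(\sqrt{21}-3)$, eliminating the paper's sub-cases (a) and (b) entirely; the remaining case $\ell,\ell'\geq 7$ is handled exactly as in the paper. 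What your write-up does not do is exhibit any of the decisive inequalities: as submitted it is a correct strategy with a sound case division, but the ``finite verification'' it reduces to is still owed, and the claim that one can simply invoke monotonicity in $\ell,\ell'$ should be replaced by the observation that the bounds stabilize once $\ell,\ell'\geq 7$ (the dependence on the large entries washes into fixed intervals), which is what makes the check genuinely finite.
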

\begin{proof}
By Lemma \ref{l0} we may assume that either (i) $\ell,\ell' \geq 7$ or (ii) that $\ell'=1$ and $\ell\geq 7.$
In case (i)
there is an $n \in \Z$ so that 
\[r_{n-1}=[\ell',2,\ell,\dots],\;\;r_n=[2,\ell,\dots],\;\;s_{n-1}=[0,\dots],\;\;s_n=[0,\ell',\dots],\]
hence
\[
r_{n-1}>[7,2,7]=\tfrac{112}{15},\;\;0<s_{n-1}<1,\;\;2<r_n<[2,7]=\tfrac{15}{7},\;\;0<s_n<[0,7]=\tfrac{1}{7}.
\]
It follows that
\[
0<\frac{1}{r_{n-1}+s_{n-1}}<\tfrac{15}{112}\;\;\;\mathrm{and} \;\;\;-\tfrac{1}{16}<\frac{2-r_n+s_n}{r_n+s_n}< \tfrac{1}{14}
\]
so that by \eqref{mupp} we have that $\mu''_n(K)<\tfrac{11}{56}=0.196429\dots.$

In case (ii) either (a) $K$ contains $(m',1,m,1,2,\ell)$ where $m,m'\geq 2$ or
(b) $K$ contains $(m,2,1,2,\ell)$ where $m\geq 7$.

In case (a) 
there is an $n \in \Z$ so that 
\[r_{n-1}=[m,1,2,\ell,\dots],\;\;r_n=[1,2,\ell,\dots],\;\;s_{n-1}=[0,1,m',\dots],\;\;s_n=[0,m,1,m',\dots].\]
Thus
\[m+\tfrac{2}{3}<r_{n-1}<m+\tfrac{15}{22},\;\;\; \tfrac{2}{3}<s_{n-1}<1,\;\;\;\tfrac{22}{15}<r_{n}<\tfrac{3}{2},\;\;\; \tfrac{1}{m+1}<s_{n}<\tfrac{1}{m+\frac{2}{3}}.\]
Hence
\[
\frac{(m+3)(3m+2)}{3(m+1)(3m+4)}<\frac{2-r_n+s_n}{r_n+s_n}<\frac{(m+1)(24m+61)}{(3m+2)(22m+37)}
\]
and
\[
\frac{1}{m+\frac{37}{22}}<\frac{1}{r_{n-1}+s_{n-1}}<\frac{1}{m+\frac{4}{3}}.
\]
It now follows easily  that  $\mu''_n(K)<\tfrac{4}{11}=0.363636\dots.$

The same kind of computation shows that in case (b) we have 
\[\tfrac{8}{3}<r_{n-1}<\tfrac{59}{22},\;\;\; 0<s_{n-1}<\tfrac{1}{7},\;\;\;\tfrac{22}{15}<r_{n}<\tfrac{3}{2},\;\;\; \tfrac{7}{15}<s_{n}<\tfrac{1}{2}\]
and so
$\mu''_n(K)<\tfrac{157}{870}=0.18046\dots.$

By  \eqref{mu} and  \eqref{max}   the result follows.
\end{proof}

\bigskip\noindent
 Lemmas \ref{l0} and \ref{l1} prove the first statement of Proposition~\ref{K2}.

\bigskip\noindent
\begin{lemma}\label{l2}
Suppose that  $K$ contains the subsequence $(1,2,1,m)$. If $m \neq 6 $
then $K$ is not exceptional.
\end{lemma}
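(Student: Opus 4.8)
The goal is to show that if $K$ contains the block $(1,2,1,m)$ with $m\neq 6$, then $K$ is not exceptional, i.e.\ $\mu(K)<\tfrac14(\sqrt{21}-3)$. By Lemma~\ref{l1}, once a $2$ appears it must be flanked on both sides by $1$'s, so the relevant local configuration is $(\dots,1,2,1,m,\dots)$ with $m\geq 2$, and by the $m=2$ case already handled in Proposition~\ref{K2} we may in fact assume $m\geq 3$. The plan is to split into the cases $m\geq 7$ and $m\in\{3,4,5\}$ (the case $m=6$ being the exception), and in each case locate an index $n$ for which the ``local'' roots $r_{n-1},r_n,s_{n-1},s_n$ are pinned into narrow intervals by the visible partial quotients, then bound one of $\mu'_n,\mu''_n,\mu'''_n$ (or the corresponding quantity for $K^*$) below $0.395644\ldots$.

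\textbf{Key steps.} First, fix the index $n$ so that $r_n=[1,m,\dots]$ (so $r_{n-1}=[2,1,m,\dots]$, $s_{n-1}=[0,1,\dots]$, $s_n=[0,2,1,\dots]$), or an adjacent shift, depending on which of the three $\mu$-quantities gives the cleanest estimate; the presence of the $2$ and the $1$'s means we are probing the ``$\mu''$'' type quantity from \eqref{mupp}, which mixes $r_n-s_n$ against a reciprocal. Second, use the standard continued-fraction two-sided bounds: a partial quotient list beginning $[a_1,a_2,\dots]$ lies between its truncations $[a_1,\dots,a_k]$ and $[a_1,\dots,a_k,1]$; applying this with $m\geq 7$ versus $m\in\{3,4,5\}$ gives the requisite intervals for $r_n, s_n$ (and $r_{n-1},s_{n-1}$, which only depend weakly on the far-away entries since those lie inside an interval of length $<1$). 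Third, in the large-$m$ regime ($m\geq 7$), by Lemma~\ref{l0} we may further assume the neighboring partial quotients on the other side of the block are large or are forced $1$'s, which sharpens the bounds; plug the intervals into \eqref{mupp} and verify $\mu''_n(K)<0.395644$. Fourth, in the small-$m$ regime, one expects the block $(1,2,1,m)$ with $m=3,4,5$ to \emph{immediately} force $\mu<0.395644$ via $\mu'''$ or $\mu''$, essentially because $K_2=(\overline{1,2,1,6})$ is the unique near-extremal sequence containing $2$, so any deviation from $6$ destroys exceptionality; this is a finite check once the intervals are written down. Finally, assemble: by \eqref{mu} and \eqref{max}, exhibiting one index with a small $\mu''_n$ or $\mu'''_n$ (for $K$ or $K^*$) suffices.

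\textbf{Main obstacle.} The delicate part is the large-$m$ case: as $m\to\infty$ the block $(1,2,1,m)$ ``looks more and more like'' sitting inside $(\overline{1,2,1,6})$ only if the other partial quotients conspire, so the estimate must not degrade as $m$ grows — one has to check that the supremum over $m\geq 7$ of the relevant $\mu''_n$ upper bound stays strictly below $0.395644\ldots$, which requires choosing the right one of the three quantities and the right index $n$ so that the $m$-dependence enters through a term that is monotone and bounded. The small-$m$ cases $m=3,4,5$ are routine finite interval arithmetic once the framework is set up, and the reduction to $m\geq 3$ uses only Lemma~\ref{l1} and the $m=2$ conclusion of Proposition~\ref{K2}; so the whole lemma reduces to correctly bookkeeping the continued-fraction intervals and selecting, for each range of $m$, which of $\mu'_n,\mu''_n,\mu'''_n$ (applied to $K$ or $K^*$) delivers a bound under $\tfrac14(\sqrt{21}-3)$.
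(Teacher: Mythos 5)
Your proposal is a plan rather than a proof: the entire content of this lemma is the numerical verification that a specific one of $\mu'_n,\mu''_n,\mu'''_n$ (for $K$ or $K^*$, at a specific index) falls below $\tfrac14(\sqrt{21}-3)=0.3956\ldots$, and none of these verifications are carried out. Moreover, the choices you do commit to would not close the argument. First, reducing to $m\geq 3$ by citing the second statement of Proposition~\ref{K2} is circular: that statement is precisely what Lemmas~\ref{l2} and~\ref{l4} are proving. The paper handles $m=2$ directly, in the same computation as $m=3,4$ (the case $1<m<5$, which yields $\mu'_n(K)<\tfrac{2m+1}{4m+7}\leq \tfrac{9}{23}$). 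Second, for $m\geq 7$ you propose to bound $\mu''_n$, but this quantity does not become small anywhere near the block: e.g.\ on the reversed block $(m,1,2,1)$ the term $\tfrac{2-r_n+s_n}{r_n+s_n}$ is roughly $\tfrac12$ while $\tfrac{1}{r_{n-1}+s_{n-1}}$ is roughly $\tfrac1m$, so $\mu''_n$ exceeds $0.4$ already at $m=7$ and tends to about $0.5$; the other placements are worse. The paper instead uses $\mu'''_n$ on the reversed sequence, where the two terms are both positive, one decaying like $2/m$ and the other bounded near $\tfrac14$; even with that choice the worst case $m=7$ gives only $\tfrac{2721}{6902}=0.3942$, a margin of about $0.0014$ below the threshold. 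Selecting the right quantity here is not bookkeeping, it is the crux.

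Third, you classify $m=5$ as routine interval arithmetic alongside $m=3,4$, but the uniform small-$m$ bound $\tfrac{2m+1}{4m+7}$ equals $\tfrac{11}{27}=0.407$ at $m=5$, which is above the threshold, so that computation fails there. The paper must enlarge the window: using the already-established cases to force the neighbouring entry, it reduces to $K$ containing $(1,5,1,2,1,5,1)$ or $(1,6,1,2,1,5,1)$ and runs a separate, sharper estimate on $\mu'_n$ for each (obtaining $\tfrac{71}{182}$ and $\tfrac{82}{209}$ respectively). In short, your framework (truncation bounds on $r_n,s_n$ fed into the $\mu$-quantities) is the paper's framework, but the case split, the selection of which $\mu$ applies to which case, and the arithmetic that you defer are exactly where the lemma lives, and as stated your plan has a circular step and a case ($m\geq 7$ via $\mu''$) that would fail.
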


\begin{proof}

Suppose that $1<m<5.$
By Lemmas \ref{l0} and  \ref{l1} we may assume that  for some $n\in \Z$  
\[r_{n-1}=[2,1,m,1,m',\dots],s_{n-1}=[0,1,m'',\dots],r_n=[1,m,1,m',\dots]\;\mathrm{and}\;s_n=[0,2,1,m'',\dots],\] 
where $m',m''\geq 2$.
It follows that
\[
[2,1,m,1,2]<r_{n-1}<[2,1,m,1]\;\;\mathrm{and}\;\;\;\tfrac{2}{3}<s_{n-1}<1,\]
while
\[
[1,m,1]<r_n<[1,m,1,2]\;\;\;\mathrm{and}\;\;\; [0,2,1]<s_n<[0,2,1,2].\]
Hence
\[
\frac{m+2}{4 m+7}<\frac{1}{r_{n-1}+s_{n-1}}<\frac{9 m+15}{33 m+46}\;\;\mathrm{and}\;\;\frac{8 (3 m+2)}{33 m+46}<\frac{1}{r_{n}+s_{n}}<\frac{3 (m+1)}{4 m+7}.
\]
A calculation now shows that for $m\leq 4$
\[
\left|\frac{1}{r_n+s_n}-\frac{1}{r_{n-1}+s_{n-1}}\right|<\frac{2m+1}{4m+7}<\frac{9}{23}=0.391304\dots
\]
and the statement of the Lemma in this case  follows by \eqref{mup}, \eqref{mu}  and \eqref{max}.

Now assume that $m \geq 7.$
For this we will apply \eqref{muppp} to the reversed sequence $(m, 1,2,1)$.
By the above  we may assume that 
\[ r_{n-1}=[m,1,2,1,m',1,\dots],
s_{n-1}=[0,1,m'',\dots], r_n=[1,2,1,m',\dots], s_n=[0,m,1,m'',\dots]\]
where $ m'\geq 5$ and $m''\geq 2 $.
Then a calculation 
using
\[r_{n-1}>[m,1,2,1,5],\;[1,2,1]<r_n<[1,2,1,5],\;s_{n-1}>[0,1,2],\;[0,m,1]<s_n<[0,m,1,2]\]
gives
\[
0<\frac{2}{r_{n-1}+s_{n-1}}<\frac{2}{m+\frac{97}{69}}\;\;\;\mathrm{and}\;\;\;\frac{3 m-7}{9 m+6}< \frac{r_n-s_n-1}{r_n+s_n}<\frac{3 (6 m-11)}{17 (4 m+7)}.
\]

Thus we have
\begin{align*}
\left|\frac{2}{r_{n-1}+s_{n-1}}+\frac{r_n-s_n-1}{r_n+s_n}\right|<&\frac{3 \left(414 m^2+2951 m+4407\right)}{17 (4 m+7) (69 m+97)} \leq \frac{2721}{6902}=0.394234\dots
\end{align*}
for $m \geq 7.$

Suppose now that $K$ contains $(1,2,1,5)$. 
By the above we may assume that  $K$ contains $(1,5,1,2,1,5,1)$ or $(1,6,1,2,1,5,1)$.

If $K$ contains $(1,5,1,2,1,5,1)$ we have for some $n$ that
\[
[1,2,1,5,1]<r_{n-1}<[1,2,1,5,1,2],\;\;\;\; [0,5,1]<s_{n-1}<[0,5,1,2]
\]
and
\[
[2,1,5,1,2]<r_{n}<[2,1,5,1],\;\;\;\; [0,1,5,1,2]<s_{n}<[0,1,5,1].
\]
This gives
\begin{align*}
0.65473\dots=\tfrac{969}{1480}<\frac{1}{r_{n-1}+s_{n-1}}&<\tfrac{60}{91}=0.659341\dots\;\;\mathrm{and}\\
0.269231\dots=\tfrac{7}{26}<\frac{1}{r_{n}+s_{n}}&<\tfrac{10}{37}=0.27027\dots
\end{align*}
and hence
\[
\mu_n'(K)<\tfrac{ 71}{182}=0.39011\dots.
\]
Similarly, if $K$ contains $(1,5,1,2,1,6,1)$ we have for some $n$ the same inequalities for $s_{n-1}$ and $s_n$
while
\[
[1,2,1,6,1]<r_{n-1}<[1,2,1,6,1,2] \;\;\;\; \mathrm{and}\;\;\;\;[2,1,6,1,2]<r_{n}<[2,1,6,1].
\]
This gives 
\begin{align*}
0.655757\dots=\tfrac{1122}{1711}<\frac{1}{r_{n-1}+s_{n-1}}&<\tfrac{138}{209}=0.669856\dots\;\;\mathrm{and}\\
0.267943\dots=\tfrac{56}{209}<\frac{1}{r_{n}+s_{n}}&<\tfrac{460}{1711}=0.268849\dots.
\end{align*}
This gives
\[
\mu'_n(K) <\tfrac{ 82}{209}=0.392344\dots.
\]
Hence by \eqref{mu} and \eqref{max} we are done.
\end{proof}

If we now assume that $K$  contains $(1,2,1,6)$ then in fact we may assume that $K$ contains 
$(1,2,1,6,1,m,1)$ where $m\geq 2.$

\begin{lemma}\label{l4}
Suppose that $K$ contains $(1,2,1,6,1,m,1)$
where $m>2$.
Then $K$ is not exceptional.
\end{lemma}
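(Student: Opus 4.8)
The plan is to mimic the pattern already established in Lemmas \ref{l1}--\ref{l2}: assuming $K$ contains $(1,2,1,6,1,m,1)$ with $m>2$, I will locate an index $n$ so that one of the quantities $\mu'_n(K),\mu''_n(K),\mu'''_n(K)$ (for $K$ or for the reversed sequence $K^*$) is strictly below the threshold $\tfrac14(\sqrt{21}-3)=0.395644\dots$, which by \eqref{mu} and \eqref{max} forces $\mu(K)$ below the threshold and hence $K$ non-exceptional. By Lemma \ref{l0} and Lemma \ref{l1}, every entry of $K$ outside the visible block is $\geq 1$ and every entry adjacent to a $2$ on a side where the block does not already dictate it is again constrained; in particular the block $(1,2,1,6,1,m,1)$ is flanked by further $1$'s and the next free entries $m',m''$ are $\geq 2$. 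So I may take, for a suitable $n$,
\begin{equation*}
r_{n-1}=[1,6,1,m,1,m',\dots],\quad s_{n-1}=[0,2,1,m'',\dots],\quad r_n=[6,1,m,1,m',\dots],\quad s_n=[0,1,2,1,m'',\dots],
\end{equation*}
(up to reversing $K$ so that the $2$ sits at the appropriate spot), and then bound $r_n,s_n,r_{n-1},s_{n-1}$ from above and below by truncating the continued fractions, exactly as in the earlier lemmas: e.g.\ $[1,6,1,m,1,2]<r_{n-1}<[1,6,1,m,1]$ for $m$ even-position-truncation purposes, $[0,2,1]<s_{n-1}<[0,2,1,2]$, and so on, keeping $m\geq 3$ as a parameter.

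The next step is to feed these interval bounds into whichever of \eqref{mup}, \eqref{mupp}, \eqref{muppp} gives the cleanest estimate. I expect that, as in Lemma \ref{l2}, one needs to split into two or three ranges of $m$: a finite initial range ($m=3,4,5$, say, or perhaps just the borderline cases), handled by explicit rational bounds, and the tail $m\geq$ some cutoff, handled by a uniform rational function of $m$ that is manifestly $<0.395644$ and monotone. For the tail one writes the relevant $\mu$-quantity as a ratio of polynomials in $m$ coming from the convergents of the truncated continued fractions, checks the numerator/denominator degrees match, and verifies the limiting value (and monotonicity) is comfortably under the threshold. The arithmetic is the same flavour as the displays $\mu''_n(K)<\tfrac{2721}{6902}$ and $\mu'_n(K)<\tfrac{82}{209}$ already appearing above.

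The main obstacle will be purely bookkeeping: choosing the right one of the three $\mu$-quantities (and deciding whether to work with $K$ or $K^*$) so that the estimate is both uniform in $m$ and stays below $0.395644$ without requiring too fine a case split, since the block $(1,2,1,6)$ together with a large $m$ pushes the relevant neighbouring values of $r_n+s_n$ close to the extremal configuration realised by $K_2=(\overline{1,2,1,6})$ itself — that is why $m=2$ is the one excluded value. Concretely I anticipate that $\mu''$ applied near the junction between the ``$6$'' and the ``$m$'' (or $\mu'''$ on the reversal) does the job, with the $m\geq 3$ hypothesis exactly providing the slack that $m=2$ would not. Once the uniform tail bound and the handful of small-$m$ checks are in place, \eqref{mu} and \eqref{max} finish the proof.
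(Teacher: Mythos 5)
Your overall strategy is the right one and is the paper's: flank the block using the constraint from Proposition~\ref{K2} that $K$ alternates $1$'s with entries $\ge 2$, bound the relevant $r$'s and $s$'s by truncated continued fractions, and show that one of the quantities \eqref{mup}--\eqref{muppp} falls below $\tfrac14(\sqrt{21}-3)$; your guess that the winning quantity is $\mu'''$ taken near the junction of the $6$ and the $m$, uniformly in $m\ge 3$, is exactly what happens. But there is a genuine gap: the entire content of this lemma is the computation, which you never carry out, and the one concrete placement you do commit to does not work. With your choice $r_{n-1}=[1,6,1,m,\dots]$, $s_{n-1}=[0,2,1,\dots]$, $r_n=[6,1,m,\dots]$, $s_n=[0,1,2,1,\dots]$, one gets (already for $m=3$) $r_{n-1}+s_{n-1}\approx 1.5$ and $r_n+s_n\approx 7.5$, whence $\mu'_n\approx 0.53$, $\mu''_n\approx 1.2$ and $\mu'''_n\approx 2.0$; all three sit far above the threshold $0.3956\dots$, and this only gets worse as $m$ grows. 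So ``a suitable $n$'' is doing real work, and your plan as written would stall at this point.

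The placement that succeeds is one step over from yours: put $n-1$ at the $6$ and $n$ at the $1$ between the $6$ and the $m$, so that
\[
[6,1,m,1,2]<r_{n-1},\qquad [0,1,2]<s_{n-1},\qquad [1,m,1]<r_n<[1,m,1,2],\qquad [0,6,1,2,1]<s_n<[0,6,1,2].
\]
These give $0<\tfrac{2}{r_{n-1}+s_{n-1}}<\tfrac{6(3m+5)}{69m+106}$ and sandwich $\tfrac{r_n-s_n-1}{r_n+s_n}$ between explicit rational functions of $m$, leading to
\[
\mu'''_n(K)<\frac{846m^3+9975m^2+20671m+11218}{(3m+2)(31m+58)(69m+106)}\le \frac{185848}{519893}=0.3574\dots
\]
for every $m\ge 3$ in a single stroke --- no split into small and large $m$ is needed here, in contrast to Lemma~\ref{l2}. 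Until you identify this index and produce such a uniform bound (or an equivalent one), the lemma is not established.
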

\begin{proof}
We may assume that
 for some $n$ 
\[
[6,1,m,1,2]<r_{n-1},\;\;\;\; [0,1,2]<s_{n-1}
\]
and
\[
[1,m,1]<r_{n}<[1,m,1,2],\;\;\;\; [0,6,1,2,1]<s_{n}<[0,6,1,2].
\]
Thus we have that
\[
0<\frac{2}{r_{n-1}+s_{n-1}}<\frac{6 (3 m+5)}{69 m+106}\]
and
\[
\frac{-9 m^2+45 m+34}{(m+1) (69 m+106)}< \frac{r_n-s_n-1}{r_n+s_n}<-\frac{(m+1) (12 m-73)}{(3 m+2) (31 m+58)}.
\]
Hence
\begin{align*}
\left|\frac{2}{r_{n-1}+s_{n-1}}+\frac{r_n-s_n-1}{r_n+s_n}\right|<\frac{846 m^3+9975 m^2+20671 m+11218}{(3 m+2) (31 m+58) (69 m+106)}\\ \leq \frac{185848}{519893}=0.357474\dots.
\end{align*}
for $m \geq 3.$
\end{proof}
This gives the second statement and thus completes the proof of Proposition~\ref{K2}.

\bigskip\noindent
Suppose now that $K=(\dots, 1, m_1,1,m_2,1,\dots)$ is exceptional with $K \neq K_1$ and $K \neq K_2$.  By Proposition~\ref{K2}
and \eqref{K} we have that $m_j\geq 3$ for all $j$ with at least one $m_j> 3$.
Now for $K_3$ given in \eqref{K} we have that
\[
\mu_n'''(K_3)= \left|\frac{2}{r_{n-1}+s_{n-1}}+\frac{r_n-s_n-1}{r_n+s_n}\right|=\frac{1}{4} \left(\sqrt{21}-3\right),
\]
when
\[
r_{n}=[1,3,1,3,1,\dots]\;\;\;\mathrm{and}\;\;\; s_{n}=[0,4,1,3,1,3,\dots].
\]
Since $\mu(K)\leq \mu_n'''(K)$  by \eqref{mu} and  \eqref{max},
Proposition~\ref{mp} is  a consequence of the following lemma, which implies that \[\mu_n'''(K) < \mu_n'''(K_3)\]
unless $K$ is equivalent to $K_3.$

\begin{lemma}
Suppose that
\[
r_n=[1,m_1,1,m_2,1,\dots], \;\;\;s_n=[0,m_0,1,m_{-1},1,\dots]
\]
and 
\[
r'_n=[1,m'_1,1,m'_2,1,\dots],\;\;\; s'_n=[0,m'_0,1,m'_{-1},1,\dots]
\]
with $m'_j \geq m_j\geq 1$ for all $j \in \Z.$
Then
\[
\left|\frac{2}{r'_{n-1}+s'_{n-1}}+\frac{r'_n-s'_n-1}{r'_n+s'_n}\right|\leq \left|\frac{2}{r_{n-1}+s_{n-1}}+\frac{r_n-s_n-1}{r_n+s_n}\right|,
\]
with equality if and only if $ m'_j= m_j$ for all $j \in \Z.$
\end{lemma}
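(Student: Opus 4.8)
The plan is to reduce the asserted inequality, which concerns $\mu'''_n$, to the statement that a certain two‑variable rational function is strictly monotone in the partial quotients $m_j$. \textbf{First, simplify the functional.} By the recursions \eqref{recu}, $r_{n-1}=k_{n-1}+\tfrac{1}{r_n}$ and $s_{n-1}=\tfrac{1}{s_n}-k_{n-1}$, so $r_{n-1}+s_{n-1}=\tfrac{1}{r_n}+\tfrac{1}{s_n}$ and therefore
\begin{equation*}
\frac{2}{r_{n-1}+s_{n-1}}+\frac{r_n-s_n-1}{r_n+s_n}=\frac{2r_ns_n+r_n-s_n-1}{r_n+s_n}=:F(r_n,s_n).
\end{equation*}
Since $r_n=[1,m_1,1,\dots]\in(1,2)$ and $s_n=[0,m_0,1,\dots]\in(0,1)$, the numerator equals $r_n(2s_n+1)-(s_n+1)\ge s_n>0$, so $F>0$; the same holds for the primed data. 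Hence both absolute values may be dropped and it suffices to show $F(r_n',s_n')\le F(r_n,s_n)$, with equality only when all $m_j'=m_j$.

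\textbf{Next, the monotonicity of $F$ itself.} A direct differentiation gives, for $r\in(1,2)$ and $s\in(0,1)$,
\begin{equation*}
\frac{\partial F}{\partial r}=\frac{2s^2+2s+1}{(r+s)^2}>0,\qquad
\frac{\partial F}{\partial s}=\frac{2r^2-2r+1}{(r+s)^2}>0,
\end{equation*}
the second numerator being $2(r-\tfrac12)^2+\tfrac12$. Thus $F$ is strictly increasing in each of $r_n$ and $s_n$.

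\textbf{Then, the monotonicity of $r_n$ and $s_n$ in the $m_j$.} I would invoke the standard fact that a simple continued fraction $[a_1,a_2,a_3,\dots]$ is strictly increasing in $a_i$ for odd $i$ and strictly decreasing for even $i$, and that a termwise inequality $a_i\le a_i'$ for all $i$ moves the value monotonically according to these parities; the latter follows by truncating (where it is an elementary induction) and passing to the limit. In $r_n=[1,m_1,1,m_2,1,\dots]$ the entry $m_j$ occupies the even slot $2j$, so $r_n$ is strictly decreasing in each $m_j$, $j\ge1$; and since $s_n^{-1}=[m_0,1,m_{-1},1,\dots]$, the entry $m_j$ ($j\le0$) occupies an odd slot, so $s_n^{-1}$ is strictly increasing and $s_n$ strictly decreasing in each such $m_j$. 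Combining with the previous paragraph, $F$ is strictly decreasing in every $m_j$, $j\in\Z$. Consequently $m_j'\ge m_j$ for all $j$ forces $F(r_n',s_n')\le F(r_n,s_n)$, and if $m_{j_0}'>m_{j_0}$ for some $j_0$ then altering that single coordinate already strictly decreases $F$, so the inequality is strict unless $m_j'=m_j$ for all $j$ — which is the lemma.

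The routine parts are the algebraic simplification of Step~1 and the two derivative computations of Step~2. The one point that needs a little care is the last claim of the previous paragraph: passing from one‑coordinate monotonicity to monotonicity under a termwise inequality that may change infinitely many $m_j$ at once. I would dispatch this by truncating the continued fractions at length $k$, using that a finite continued fraction depends on each of its entries monotonically with the stated parity, and then letting $k\to\infty$; the convergence is uniform in the entries because the gap between successive convergents is $O(1/(q_kq_{k+1}))$ with the denominators $q_k$ growing at least like the Fibonacci numbers, every partial quotient being $\ge1$.
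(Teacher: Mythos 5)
Your argument is correct and follows essentially the same route as the paper's: the same reduction via \eqref{recu} to $F(r_n,s_n)=\frac{2r_ns_n+r_n-s_n-1}{r_n+s_n}$, the same gradient computation showing $F$ is strictly increasing in each variable on $(1,2)\times(0,1)$, combined with the parity-based monotonicity of continued fractions in their entries. Your treatment is somewhat more explicit than the paper's about positivity of $F$ (needed to drop the absolute values) and about the termwise-comparison/truncation step, but the substance is identical.
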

\begin{proof}
Under our assumptions we have 
 $r_{n-1}\leq r'_{n-1}$ and $r_{n}\geq r'_{n}$.
 Now $s_{n-1}=\frac{1}{s_n}-m_0$ and $r_{n-1}=\frac{1}{r_n}+m_0$ and similarly for $r',s'$.
 Thus 
 \[\frac{2}{r_{n-1}+s_{n-1}}+\frac{r_n-s_n-1}{r_n+s_n}=\frac{2r_ns_n+r_n-s_n-1}{r_n+s_n}.\]
 The result now follows since
 the function
  \[
F(x,y)=\frac{2 x y+x-y-1}{x+y}
\]
satisfies $0<F(x_1,y_1)\leq F(x_2,y_2)$ whenever $1<x_1\leq x_2<2$ and $0<y_1\leq y_2<1$,
with equality if and only if $x_1= x_2$ and $y_1=y_2$.
To see this, use that the gradient of $F(x,y)$ is given by
\[
\nabla F(x,y) =\left(\tfrac{2 y (y+1)+1}{(x+y)^2},\tfrac{2 (x-1) x+1}{(x+y)^2}\right). \qedhere
\]
\end{proof}

\section{Approximating $\mathcal C_3$}\label{final}

We conclude by justifying the final statement of Theorem~\ref{main}.
For $\ell\geq 1$, let
\[
K_{\ell}=(\overline{3,1},4,1,3,1,\dots, 3,1,4,\overline{1,3}), \qquad k_0 = k_{2\ell} = 4,
\]
where the number of  1's between the 4's is given by $\ell$. 
Propositions~\ref{finl} and \ref{mp} show that the billiard $\mathcal B_\ell$ associated to $K_\ell$ satisfies $\lambda_i(\mathcal B_\ell) > \tfrac{1}{3} (3+\sqrt{21})$.

\begin{proposition}\label{dec}
Let $\mathcal{B}_\ell$ be the billiard associated to the class of $K_{\ell}$ by Theorem \ref{map2}. Then
\[\lim_{\ell \rightarrow \infty}\lambda_i(\mathcal{B}_\ell)=\tfrac{1}{3} (3+\sqrt{21}).\]
\end{proposition}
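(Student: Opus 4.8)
The plan is to compute $\lambda_i(\mathcal B_\ell) = \mu(K_\ell)^{-1}$ using Proposition~\ref{finl} and show that the minimizing quantity $\mu(K_\ell)$ increases to $\tfrac14(\sqrt{21}-3)$ as $\ell\to\infty$; since $\lambda_i(\mathcal B_\ell)>\tfrac13(3+\sqrt{21})$ already follows from Propositions~\ref{finl} and \ref{mp}, it suffices to exhibit a sequence of indices $n=n(\ell)$ along which $\mu'''_{n}(K_\ell)\to \tfrac14(\sqrt{21}-3)$ from above. The natural choice is to center at the ``far'' 4, i.e.\ to pick $n$ so that
\[
r_n = [1,\underbrace{3,1,3,1,\dots}_{\text{toward the end of the block}},4,\overline{1,3}], \qquad s_n = [0,4,1,3,1,3,\dots],
\]
mirroring exactly the configuration where $\mu'''_n(K_3)$ attains its value, except that now the tail of $r_n$ is the finite string $(1,3)^{j}$ followed by a $4$ and then the periodic tail $\overline{1,3}$, rather than $\overline{1,3}$ immediately. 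As $\ell\to\infty$ we may take $j=j(\ell)\to\infty$, so that $r_n([\ell]) \to [1,3,1,3,\dots] = \alpha_{K_3}$-type limit and $s_n$ is unchanged; by continuity of continued fractions in their (eventually periodic) tails, $r_n\to r_n(K_3)$ and hence $\mu'''_n(K_\ell)\to \mu'''_n(K_3) = \tfrac14(\sqrt{21}-3)$.

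Concretely, I would first record the two relevant limiting surds: $[\overline{1,3}] = \tfrac{1+\sqrt{21}}{... }$ (solving $x = 1 + \tfrac{1}{3+1/x}$ gives the quadratic pinning down $\alpha_{K_3}$), and likewise the value of $s_n$ at the center of $K_3$, so that $F(x,y) = \tfrac{2xy+x-y-1}{x+y}$ evaluated there equals $\tfrac14(\sqrt{21}-3)$ — this is already implicit in the computation of $\mu'''_n(K_3)$ in \S\ref{ex}, so I would just cite it. Next, for finite $\ell$, the relevant $r_n$ and $s_n$ differ from these limiting values only through the truncation: $r_n$ has a convergent of $[1,(3,1)^{j},4,\overline{1,3}]$, and the error from replacing the tail is $O(q_j^{-2})$ where $q_j$ is the denominator of the $j$-th convergent of $[\overline{1,3}]$, hence exponentially small in $j$ and therefore in $\ell$. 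Then, since $F$ is continuous (indeed smooth, with bounded gradient on the relevant compact box $1<x<2$, $0<y<1$, as noted in the last lemma of \S\ref{ex}), $\mu'''_n(K_\ell) = F(r_n,s_n) \to F(\alpha, s) = \tfrac14(\sqrt{21}-3)$.

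Finally I would combine the inequalities: $\mu(K_\ell)\le \mu'''_n(K_\ell) \to \tfrac14(\sqrt{21}-3)$ gives $\liminf \lambda_i(\mathcal B_\ell)^{-1} \ge$ wait — more carefully, $\mu(K_\ell)\le \mu'''_n(K_\ell)$ gives $\limsup_\ell \mu(K_\ell) \le \tfrac14(\sqrt{21}-3)$, while Proposition~\ref{mp} (applied to $K_\ell$, which is not equivalent to any $K_j$) gives $\mu(K_\ell) < \tfrac14(\sqrt{21}-3)$ for every $\ell$, hence $\mu(K_\ell)\to \tfrac14(\sqrt{21}-3)$, i.e.\ $\lambda_i(\mathcal B_\ell)\to \tfrac13(3+\sqrt{21})$ after rationalizing $\big(\tfrac14(\sqrt{21}-3)\big)^{-1}=\tfrac13(3+\sqrt{21})$.

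The main obstacle, and the only real work, is the \emph{lower} bound on $\mu(K_\ell)$ that ensures no \emph{other} index $n$ (or any index for the reversal $K_\ell^*$) produces a value dropping strictly below $\tfrac14(\sqrt{21}-3)$ by a fixed amount — i.e.\ verifying that the infimum defining $\mu(K_\ell)$ is, for large $\ell$, actually governed by the central-$4$ configuration rather than by some window near the ends or near the interior $(3,1)$-runs. This is exactly the content of Proposition~\ref{mp}, which already shows $\mu(K)<\tfrac14(\sqrt{21}-3)$ for any $K\not\sim K_j$; so in fact no additional lower bound is needed — Proposition~\ref{mp} supplies it for free, and the only thing left to check is that the particular $\mu'''_{n(\ell)}(K_\ell)$ I singled out does converge to the claimed limit from above, which is the routine continued-fraction continuity estimate sketched above. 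I would close by remarking that convergence is \emph{monotone} increasing in $\ell$ (since lengthening the central $(3,1)$-block moves $r_n$ monotonically toward $\alpha_{K_3}$ and $F$ is monotone in $x$ on the relevant box, by the gradient computation in the last lemma of \S\ref{ex}), which also re-derives the statement in Theorem~\ref{main} that $\tfrac13(3+\sqrt{21})$ is a limit point of $\mathcal M_i$.
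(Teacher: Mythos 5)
There is a genuine gap, and it sits exactly where you declare that ``no additional lower bound is needed.'' You need to show $\mu(K_\ell)\to\tfrac14(\sqrt{21}-3)$, which requires both $\limsup_\ell\mu(K_\ell)\le\tfrac14(\sqrt{21}-3)$ and $\liminf_\ell\mu(K_\ell)\ge\tfrac14(\sqrt{21}-3)$. Your continuity argument along the central index $n(\ell)$ correctly gives the first inequality (and Proposition~\ref{mp} independently gives the strict version $\mu(K_\ell)<\tfrac14(\sqrt{21}-3)$, since $K_\ell\not\sim K_j$). But both of these are \emph{upper} bounds on $\mu(K_\ell)$. Proposition~\ref{mp} says $\mu(K)<\tfrac14(\sqrt{21}-3)$ for every non-exceptional $K$; it is perfectly consistent with $\mu(K_\ell)=0.1$ for all $\ell$, in which case $\lambda_i(\mathcal B_\ell)=10$ and the proposition fails. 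So citing Proposition~\ref{mp} for the $\liminf$ is a direction-of-inequality error: it bounds $\lambda_i(\mathcal B_\ell)$ from \emph{below} by $\tfrac13(3+\sqrt{21})$, whereas what you must supply is the matching upper bound on $\lambda_i(\mathcal B_\ell)$, i.e.\ that no window of $K_\ell$ (or of $K_\ell^*$) produces a value of $\mu'_n$, $\mu''_n$, or $\mu'''_n$ bounded away below $\tfrac14(\sqrt{21}-3)$ uniformly in $\ell$. You even identify this as ``the only real work'' before incorrectly dismissing it.

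This missing step is in fact the bulk of the paper's proof. There the authors show $\mu(K_\ell)=\mu'''_1(K_\ell)$ by a case analysis: for every $n$ one has $\mu'_n(K_\ell),\mu''_n(K_\ell)>0.399$, and $\mu'''_n(K_\ell)>0.399$ unless $n\in\{\pm1,\,2\ell\pm1\}$ (the indices adjacent to the two $4$'s), using explicit continued-fraction bounds such as $[\overline{1,4}]\le r_n\le[\overline{1,3}]$ for $n$ odd; they then compare $\mu'''_{-1}$ with $\mu'''_1$ via \eqref{recu} and compute $\mu'''_1(K_\ell)$ in closed form in terms of the fundamental unit $\ep=\tfrac12(5+\sqrt{21})$, from which the limit is immediate. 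An alternative route to the $\liminf$ that stays closer to your setup would be a semicontinuity argument: every window of fixed length in $K_\ell$ is, for large $\ell$, exponentially close to a window of $K_3$ or of $K_1$, and $\min\big(\mu(K_1),\mu(K_3)\big)=\tfrac14(\sqrt{21}-3)$; but some such argument must be made, and your proposal contains none. (The closing monotonicity claim inherits the same problem: monotonicity of the single quantity $\mu'''_{n(\ell)}$ in $\ell$ does not by itself yield monotonicity of the infimum $\mu(K_\ell)$.)
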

\begin{proof}
For a fixed $\ell$, define $r_n=r_n(\ell)$ and $s_n=s_n(\ell)$ as in \eqref{rs} for the sequence $K_\ell$.
Then
\begin{equation*}
  r_1 = [1,3,1,\ldots,3,1,4,\overline{1,3}] \quad \text{ and } \quad s_1 = [0,4,\overline{1,3}] = \tfrac 12(5-\sqrt {21}),
\end{equation*}
where the bar indicates a repeated sequence, and the number of 1's before the 4 in $r_1$ is given by $\ell$.
One can easily compute an explicit formula for $r_1$ using the recurrence relation
\begin{equation}  \label{eq:r1-recu}
  r_1(\ell+1) = 1 + \frac{1}{3 + \frac 1{r_1(\ell)}}, \qquad r_1(1) = \tfrac 12(7-\sqrt{21}).
\end{equation}
Let
$\ep = \frac 12(5+\sqrt{21})$ denote the fundamental unit in $\Q(\sqrt{21})$.
Then by \eqref{muppp} and \eqref{recu} we have the formula
\begin{equation*}
  \mu'''_1(K_\ell) = \frac{(-3+2\sqrt{21})\ep^\ell - 3\ep^{1-\ell}}{(11+\sqrt{21})\ep^\ell - \frac 12\ep^{-\ell}}.
\end{equation*}
We claim that $\mu(K_\ell) = \mu_1'''(K_\ell)$; the result then follows easily since $\lambda_i(\mathcal B_\ell) = \mu(K_\ell)^{-1}$.

It is straightforward to check that $\mu(K_\ell) = \mu'''(K_\ell)$ for small $\ell$, so we assume $\ell\geq 3$.
To prove that $\mu(K_\ell) = \mu'''(K_\ell)$, we first show that $\mu_n'(K_\ell),\mu_n''(K_\ell)>\frac{-3+\sqrt{21}}{4}=.395\ldots$ for all $n$ and that $\mu_n'''(K_\ell)>\frac{-3+\sqrt{21}}{4}$ if $n\neq \pm1, 2\ell\pm 1$.
If $n$ is odd, then
  \begin{equation*}
    \tfrac{1+\sqrt 2}{2} = [\overline{1,4}] \leq r_n \leq [\overline{1,3}] = \tfrac{3+\sqrt{21}}{6}, \quad \tfrac{-1+\sqrt 2}{2} = [0,\overline{4,1}] \leq s_n \leq [0,\overline{3,1}] = \tfrac{-3+\sqrt{21}}{6}.
  \end{equation*}
If $n$ is even and $k_n=4$, then
\begin{equation*}
  \tfrac{5+\sqrt{21}}{2} = [4,1,\overline{3,1}] \leq r_n \leq [\overline{4,1}] = 2+2\sqrt{2}, \quad
  \tfrac{-3+\sqrt{21}}{2} = [0,\overline{1,3}] \leq s_n \leq [0,1,3,\overline{1,4}] = \tfrac{3-\sqrt 2}{2},
\end{equation*}
while if $n$ is even and $k_n\neq 4$, then
\begin{equation*}
  \tfrac {3+\sqrt {21}}{2} = [\overline{3,1}] \leq r_n \leq [3,1,\overline{4,1}] = 1+2\sqrt 2, \quad
  \tfrac{-3+\sqrt{21}}{2} = [0,\overline{1,3}] \leq s_n \leq [0,\overline{1,4}] = -2+2\sqrt 2,
\end{equation*}
It follows from \eqref{mup}, \eqref{mupp}, and \eqref{recu} that in each case, $\mu_n'(K_\ell), \mu_n''(K_\ell) > .399$.
Similarly, $\mu_n'''(K_\ell)>.399$ if $n$ is even.
If $n$ is odd and $k_{n+1},k_{n-1}\neq 4$ then
\begin{equation*}
  r_n \geq [1,3,\overline{1,4}] = \tfrac{6+2\sqrt 2}{7}, \qquad s_n \geq [0,3,1,\overline{4,1}] = \tfrac{-1+2\sqrt 2}{7},
\end{equation*}
from which it follows that $\mu_n'''(K_\ell)>.399$.

It remains to show that $\mu_{-1}(K_\ell) > \mu_1(K_\ell)$ since, by symmetry, we have $\mu_{2\ell+1}(K_\ell) > \mu_{2\ell-1}(K_\ell) = \mu_1(K_\ell)$.
By \eqref{recu} we have
\begin{equation}
  \mu'''_{-1}(K_\ell) = \mu'''_1(K_\ell) + \frac{6(4r_1s_1+s_1-r_1)}{r_1+s_1}.
\end{equation}
Thus $\mu_{-1}(K_\ell) > \mu_1(K_\ell)$ if and only if
$r_1 < \frac{s_1}{1-4s_1} = \frac 16(3+\sqrt{21})$.
This inequality follows from the relation \eqref{eq:r1-recu}, which completes the proof.
\end{proof}


\begin{thebibliography}{1}
  
  \bibitem{Ai} 	Aigner, Martin Markov's theorem and 100 years of the uniqueness conjecture. A mathematical journey from irrational numbers to perfect matchings. Springer, Cham, 2013. x+257 pp. 
  
\bibitem{Ar} E. Artin, Ein mechanisches System mit quasiergodischen Bahnen,
Hamb. Math. Abh. 3
(1924), 170--177

\bibitem{Be} A.F. Beardon,  The geometry of discrete groups. Graduate Texts in Mathematics, 91. Springer-Verlag, New York, 1983. xii+337 pp.

\bibitem{Ber}  J. Berstel; A. Lauve; C. Reutenauer; F.  Saliola, Combinatorics on words. Christoffel words and repetitions in words. CRM Monograph Series, 27. AMS, Providence, RI, 2009. xii+147 pp.

\bibitem{Bo}  Bombieri, Enrico Continued fractions and the Markoff tree. Expo. Math. 25 (2007), no. 3, 187--213.

\bibitem{Ca}  Cassels, J. W. S. An introduction to Diophantine approximation. Cambridge Tracts in Mathematics and Mathematical Physics, No. 45. Cambridge University Press, New York, 1957. x+166 pp.

\bibitem{Co}	Cohn, Harvey,  Approach to Markoff's minimal forms through modular functions. Ann. of Math. (2) 61, (1955). 1--12.


\bibitem{Co2}		Cohn, Harvey,  Markoff geodesics in matrix theory. Number theory with an emphasis on the Markoff spectrum (Provo, UT, 1991), 69--82, Lecture Notes in Pure and Appl. Math., 147, Dekker, New York, 1993.


\bibitem{CF}  Cusick, Thomas W.; Flahive, Mary E. The Markoff and Lagrange spectra. Mathematical Surveys and Monographs, 30. American Mathematical Society, Providence, RI, 1989. x+97 pp.

\bibitem{Di}  L.E. Dickson,  Modern Elementary Theory of Numbers. University of Chicago Press, Chicago, 1939. vii+309 pp.

\bibitem{Di2} L.E. Dickson, Studies in the Theory of Numbers, Chicago University Press, 1930. 
\bibitem{Fo}  L.R. Ford, A geometrical proof of a theorem of Hurwitz, Proc. Edinburgh Math. Soc. 35 (1917), 59--65.

\bibitem{Fre}  G.A.	Freiman,   Diofantovy priblizheniya i geometriya chisel (zadacha Markova). (Russian) [Diophantine approximations and the geometry of numbers (Markov's problem)] Kalinin. Gosudarstv. Univ., Kalinin, 1975. 144 pp.

\bibitem{Fr0}  G. Frobenius, \"Uber die Reduktion der indefiniten bin\"aren quadratischen Formen. (1913)

\bibitem{Fr}  G. Frobenius, \"Uber die Markoffschen Zahlen, Preuss. Akad. Wiss. Sitzungberichte (1913) 458--487. (Also in: G. Frobenius, Gesammelte Abhandlungen, Bd. 3, Springer, Berlin, Heidelberg, New York, 1968, pp. 598--627). 


\bibitem{Ha} M. Hall, Jr., On the sum and product of continued fractions. Ann. of Math. (2) 48, (1947). 966--993. 

\bibitem{Ha2} M. Hall, Jr., The Markoff spectrum. Acta Arith. 18 (1971), 387--399.

\bibitem{Ma} A.V. Malyshev, Markov and Lagrange spectra [Survey of the literature], Zap. Nauch. Sem. Lenin. Otd. Math. Inst. V.A. Steklova AN SSSR 67 (1977) 5--38. (English translation in J. Soviet Math. 16 (1981) 767--788). 

\bibitem{Mark} A. Markoff, Sur les formes quadratiques binaires ind\'efinies, Math. Ann. 15 (1879) 381--409, 17 (1880) 379--399. 

\bibitem{Sc} I. Schur, Zur Theorie der indefiniten bin\"aren quadratischen Formen, Sitzungberichte der Preussischen Akademie Wiss. 1913, 212--231.

  \end{thebibliography}
\end{document}